\numberwithin{equation}{section}
\newtheorem*{acknowledgement}{Acknowledgements}
\newtheorem{theorem}{Theorem}[section]
\newtheorem{lemma}[theorem]{Lemma}
\newtheorem{proposition}[theorem]{Proposition}
\newtheorem{remark}[theorem]{Remark}
\newtheorem*{remark*}{Remark}
\newtheorem{definition}[theorem]{Definition}
\newtheorem{conjecture}[theorem]{Conjecture}
\newcommand{\C}{\mathbb{C}}
\newcommand{\cP}{\mathcal P}
\newcommand{\barredsum}{%
  \DOTSB\mathop{\mathpalette\@barredsum\relax}\slimits@
}
\newcommand{\@barredsum}[2]{%
  \begingroup
  \sbox\z@{$#1\sum$}%
  \setlength{\unitlength}{\dimexpr2pt+\ht\z@+\dp\z@\relax}%
  \@barredsumthickness{#1}%
  \vphantom{\@barredsumbar}%
  \ooalign{$\m@th#1\sum$\cr\hidewidth$#1\@barredsumbar$\hidewidth\cr}%
  \endgroup
}
\newcommand{\@barredsumbar}{%
  \vcenter{\hbox{\begin{picture}(0,1)\roundcap\Line(0,0)(0,1)\end{picture}}}%
}
\newcommand{\@barredsumthickness}[1]{
  \linethickness{%
    1.25\fontdimen8
      \ifx#1\displaystyle\textfont\else
      \ifx#1\textstyle\textfont\else
      \ifx#1\scriptstyle\scriptfont\else
      \scriptscriptfont\fi\fi\fi 3
  }%
}
\newcommand{\be}{\beta}
\newcommand{\ga}{\gamma}
\newcommand{\de}{\delta}
\newcommand{\e}{\varepsilon}
\newcommand{\la}{\lambda}
\newcommand{\si}{\sigma}
\newcommand{\vp}{\varphi}
\newcommand{\om}{\omega}
\newcommand{\cs}{\mathcal S}
\newcommand{\cv}{\mathcal V}
\newcommand{\cj}{\mathcal J}
\newcommand{\wh}{\widehat}
\newcommand{\ZR}{\mathbb{R}}
\newcommand{\ZT}{\mathbb{T}}
\newcommand{\ZH}{\mathbb{H}}
\newcommand{\ZS}{\mathbb{S}}
\newcommand{\Id}{{\bf{1}}}
\newcommand{\Tau}{\mathcal{T}}
\newcommand{\cT}{{\mathcal T}}
\newcommand{\cC}{{\mathcal C}}
\newcommand{\R}{\mathbb{R}}
\newcommand{\Br}{{\rm{Br}}}
\newcommand{\dist}{{\rm dist}}
\newcommand{\supp}{{\rm supp}}
\begin{document}

\title[Restriction and decoupling estimates]{Restriction and decoupling estimates for the hyperbolic paraboloid in $\R^3$}

\author{Ciprian Demeter} \address{ Ciprian Demeter\\  Department of Mathematics\\ Indiana University Bloomington, USA} \email{demeterc@iu.edu}

\author{Shukun Wu} \address{ Shukun Wu\\  Department of Mathematics\\ Indiana University Bloomington, USA} \email{shukwu@iu.edu}

\thanks{The first author is partially supported by the NSF grants DMS-2055156 and  DMS-2349828.}

\thanks{The second author is partially supported by the NSF grant NSF-2453583.}

\begin{abstract}
We prove bilinear $\ell^2$-decoupling and refined bilinear decoupling inequalities for the  truncated hyperbolic paraboloid in $\R^3$.
As an application, we prove the associated restriction estimate in the range $p>22/7$, matching an earlier result for the elliptic paraboloid.

\end{abstract}

\maketitle

\section{Introduction}

\subsection{Overview} Let $S\subset\ZR^n$ be a smooth compact hypersurface and let $d\si_S$ be its surface measure.
We consider the associated extension operator 
\begin{equation}
\nonumber
    \wh{fd\si_S}(x)=\int e^{2\pi ix\cdot \xi}f(\xi)d\si_S(\xi).
\end{equation}
Elias Stein conjectured the following.

\begin{conjecture}
\label{restriction-conj}
When $S$ has non-vanishing Gaussian curvature, 
\begin{equation}
\label{restriction-esti-1}
    \|\wh{fd\si_S}\|_{p}\lesssim \|f\|_{L^p(d\si_S)}
\end{equation}
holds for all $p>\frac{2n}{n-1}$ and all smooth functions $f$ on $S$.
\end{conjecture}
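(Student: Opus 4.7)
The full conjecture---even just for the paraboloid in $\R^3$, where the target range is $p>3$---is one of the central open problems in harmonic analysis, so any realistic plan targets the current frontier for the elliptic paraboloid (namely $p>22/7$, due to Wang) and tries to transport it to the hyperbolic setting. My plan has three layers, corresponding to the tools announced in the abstract.

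First, the classical reductions. Stein--Tomas interpolation with the trivial $L^{\infty}$ bound handles the range $p\geq 4$ for any $C^{2}$ hypersurface with non-vanishing Gaussian curvature, so it suffices to address $p\in(22/7,4)$. On that range I would run a Bourgain--Guth broad/narrow decomposition: partition the hyperbolic paraboloid into caps of diameter $K^{-1}$ for a large but fixed $K$, then at each point of physical space split the contribution of $\wh{fd\si_S}$ into a \emph{narrow} part (where only caps lying in a thin neighborhood of a line contribute) and a \emph{broad} part (where at least two caps are transversally separated). The narrow part decouples via $\ell^{2}$-decoupling on a cylinder and is absorbed by induction on scale in the linear restriction constant; the broad part is exactly what the bilinear machinery is designed for.

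Second, bilinear and refined bilinear decoupling. This is the main new input. I would first prove an $\ell^{2}$ bilinear decoupling inequality for two transverse pieces of the truncated hyperbolic paraboloid, by induction on scale in the style of Bourgain--Demeter, using a cap decomposition adapted to the asymptotic null directions of the surface and a bilinear $L^{2}$/Kakeya lemma tailored to caps whose dual tubes exhibit hyperbolic transversality. I would then upgrade this to the refined (wave-packet-level) version by combining the bilinear decoupling with wave-packet concentration and a pigeonholing over spatial scales. Third, refined bilinear decoupling implies restriction: following the Guth--Iosevich--Ou--Wang scheme as sharpened by Wang for $p>22/7$, one decomposes $\wh{fd\si_S}$ into wave packets, groups them by their dyadic concentration level at each spatial scale, and for each level runs refined bilinear decoupling together with a two-ends reduction; summing the resulting $L^{p}$ bounds gives the desired restriction estimate in the range $p>22/7$.

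The principal obstacle is the bilinear decoupling step in layer two. Unlike the elliptic paraboloid, the hyperbolic paraboloid contains two families of lines, and its transversality is strongly anisotropic: two caps that are well separated in one null direction can be nearly parallel in the other, so the standard notion of ``transverse caps'' and the standard bilinear Kakeya inequality do not apply off the shelf. I expect the real work to lie in choosing the correct scale-adapted transversality condition for cap pairs, proving an associated bilinear Kakeya / $L^{2}$ orthogonality lemma strong enough to drive the induction on scales, and checking that the resulting estimates remain uniform up to the truncation so that nothing degenerates as caps approach the asymptotic lines of the surface. A secondary difficulty is verifying that the narrow-case decoupling truly reduces to a lower-dimensional setting on the hyperbolic geometry, where the relevant ``cylinder'' over a line is a genuinely ruled surface rather than a flat strip.
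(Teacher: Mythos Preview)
The statement is a conjecture, not a theorem; the paper does not prove it, and you rightly target only the partial result $p>22/7$ for the hyperbolic paraboloid (the paper's Theorem~\ref{restriction-thm}). Your overall architecture---broad/narrow reduction, bilinear $\ell^2$ decoupling, refined bilinear decoupling, then the Wang--Wu two-ends/incidence machinery---does match the paper's. But there is a genuine gap in your layer two, precisely at the step you describe as ``upgrade this to the refined (wave-packet-level) version by combining the bilinear decoupling with wave-packet concentration and a pigeonholing over spatial scales.'' This is the GIOW route: apply $\ell^2$ decoupling on each $R^{1/2}$-ball $Q$, pigeonhole, and iterate via $C(R)\lesssim_\e R^\e C(R^{1/2})$. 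The paper states explicitly that this approach ``fails rather dramatically'' in the hyperbolic bilinear setting, and the reason is concrete: the only $\ell^2$ decoupling available is \emph{bilinear}, requiring a pair of caps that are transverse in the sense of Definition~\ref{transversality} (separated in \emph{both} coordinate directions). After one application on a ball $Q$ and a parabolic rescaling, the resulting smaller cap pairs need not retain this transversality---the rescaling can collapse the separation in one coordinate---so the hypothesis of the bilinear decoupling is not self-improving and the iteration does not close. Your plan offers no mechanism to prevent this degeneration.

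What the paper actually does for Theorem~\ref{refined-dec-thm} is substantially new: a multi-scale decomposition designed to \emph{preserve the bilinear structure at every scale}. One iterates on the small balls $Q$ (not on $B_R$), introduces three parameters $K_1\ll K_2\ll K_3$, and runs a broad/narrow dichotomy on thin rectangles $\omega$ of eccentricity $K_2$ oriented perpendicular to the line joining the current cap pair. Narrow streaks shrink the square scale by $K_1$ while keeping the pair separation fixed; a broad step replaces each $\omega$ by a new bilinear object $g_\omega$ at a strictly smaller scale, with orientation inherited from the parent so that ``general position'' persists. A stopping time at frequency scale $K_3^{-1}$ is imposed: either one reaches the bottom scale $R^{-1/2}$ (yielding a genuine reverse square function estimate on $Q$), or one stops above $K_3^{-1}$ with bilinear structure intact, at which point a single rescaling (Proposition~\ref{jijgutjhguthughru}) invokes $C(R')$ at a strictly smaller $R'$. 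This is the missing idea in your proposal. Two smaller corrections: the broad/narrow reduction for $\ZH$ is not the usual two-term split but a \emph{three}-term one (Proposition~\ref{broad-narrow-lem}), with an extra ``strip'' term along each null direction handled by anisotropic rescaling; and the paper's proof of bilinear $\ell^2$ decoupling is not Bourgain--Demeter style but follows the bi-orthogonality-free scheme of \cite{FGWW}, resting on the planar bilinear restriction Lemma~\ref{main-tool} together with the geometric fact that the intersection line of two transverse tangent planes projects to a direction transverse to both axes.
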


Since Bourgain's work \cite{Bourgain-Besicovitch}, Conjecture \ref{restriction-conj} has been studied intensively.
Most recently, \cite{Wang-Wu} posted an incidence geometry conjecture that, along with decoupling theorems, would fully solve Conjecture \ref{restriction-conj} when $S$ is of elliptic type (though this was later invalidated by Cohen's example \cite{Cohen-example}).
One notable property of elliptic surfaces is that they do not contain any linear subspaces, which are typical sources of constructive interference.
For example, the $\ell^2$-decoupling theorem in \cite{Bourgain-Demeter-l2} is known to fail when $S$ is not of elliptic type. 
See \cite{BDh}. 

However, the existence of linear subspaces does not invalidate the $L^p$-estimate \eqref{restriction-esti-1}, since a surface with non-vanishing Gaussian curvature cannot contain linear subspaces of large dimension.
Moreover, it is conceivable that linear subspaces are the only obstruction to orthogonality results such as the decoupling theorem.
In other words, if constructive interference from linear subspaces is neutralized, then an appropriate form of  $\ell^2$-decoupling  may still hold.

\smallskip

In this paper we prove decoupling inequalities that support the aforementioned philosophy.
Specifically, in the setting governed by transversality (Definition \ref{transversality}), we establish both a bilinear $\ell^2$-decoupling  and a bilinear refined decoupling inequality for functions whose Fourier transforms are supported near the hyperbolic paraboloid
\begin{equation}
\label{hyperboloid}
    \ZH=\{(\xi,\eta,\xi\eta):\;(\xi,\eta)\in\R^2\}.
\end{equation}
As an application, we prove \eqref{restriction-esti-1} for $p>22/7$ when $S=\ZH\cap [-1,1]^3$, matching the best-known result in \cite{Wang-Wu} for elliptic surfaces in $\ZR^3$.

Let us now briefly describe our results.

\medskip

\subsection{Bilinear decoupling inequalities}

We first introduce some notation and a critical definition.

\smallskip

For a rectangle $\tau\subset [-1,1]^2$, define $\ZH_\tau=\{(\xi,\eta,\xi\eta):(\xi,\eta)\in\tau\}$.
For a function of three variables $f:\R^3\to\C$, we write $f_\tau$  for the Fourier restriction of $f$ to $\tau\times \R$.
We write $\cP_\Delta(\tau)$ for the collection of $\Delta$-squares in some partition of $\tau$. 
\begin{definition}
\label{transversality}
	We call a pair of squares (of arbitrary size) $\tau_1,\tau_2\subset [-1,1]^2$ {\bf transverse} if $\dist(\xi_1,\xi_2)\sim 1$ and $\dist(\eta_1,\eta_2)\sim 1$ for each $(\xi_j,\eta_j)\in \tau_j$.
\end{definition}    
Transversality is in fact equivalent to asking that $\dist(\tau_1,\tau_2)\sim 1$ and also that each line joining some $(\xi_1,\eta_1)\in\tau_1$ and $(\xi_2,\eta_2)\in\tau_2$ has slope with absolute value satisfying $$\left|\frac{\eta_2-\eta_1}{\xi_2-\xi_1}\right|\sim 1.$$
In particular,  the line 
    $\ell(\tau_1,\tau_2)$ joining the centers of such squares
is (quantitatively) transverse to both coordinate axes.	All lines contained in $\ZH$ are parallel to either the plane $\xi=0$ or the plane $\eta=0$. Transversality guarantees that none of these lines intersects both $\ZH_{\tau_1}$ and $\ZH_{\tau_2}$.

\medskip

\subsubsection{$\ell^2$-decoupling}

\begin{definition}[Bilinear decoupling constant for $\ell^2$-decoupling]
Given $0<\delta<1$ and $R\ge 1$, we let $C(\delta,R)$ be the smallest constant such that 
\begin{equation}
\label{l2-decoupling}
    \int_{\R^3}|f_1f_2|^2\le C(\delta,R) \prod_{j=1}^2\Big(\sum_{\theta_j\in \cP_{R^{-1/2}}(\tau_j)}\big\|f_{\theta_j}\big\|_{L^4(\R^3)}^2\Big)
\end{equation}
for each transverse $\delta$-squares $\tau_1,\tau_2$ and  each  $f_j$ Fourier supported on the $1/R$-neighborhood $N_{1/R}(\ZH_{\tau_j})$ of $\ZH_{\tau_j}$.    
\end{definition}

\begin{remark}
\rm

It is clear that $C(\delta,R)$ is nondecreasing in $\delta$, and, at least heuristically, it is also nondecreasing in $R$.

\end{remark}

\begin{remark}
\rm

Due to the Fourier support of $f_1$ and $f_2$, \eqref{l2-decoupling} implies (in fact, it is equivalent to) a localized version of itself, with $\R^3$ replaced on both sides by (some smooth approximations of) $1_{B_R}$.
\end{remark}
Here is our first result.
\begin{theorem}[Bilinear $\ell^2$ decoupling]
\label{dec-thm}
For all $\e>0$, we have  $C(1,R)\lesssim_\e R^{\e}$.
\end{theorem}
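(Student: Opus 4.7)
The plan is to prove $C(1,R)\lesssim_\e R^\e$ by induction on $R$, adapting the Bourgain--Demeter $\ell^2$-decoupling framework to the bilinear hyperbolic setting. Set $D(R):=C(1,R)$; the base case $R\sim 1$ is trivial, and one inductively assumes $D(R')\le A_\e (R')^\e$ for every $R'<R$, with $A_\e$ to be chosen.

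For the inductive step I would introduce an intermediate scale $K$ with $1\ll K\ll R^{1/2}$, partition each $\tau_j$ into $K^{-1}$-caps $\sigma_j$, and chain two bilinear decouplings. First, the theorem itself applied at the smaller scale $K^2$ gives the outer inequality
\[
\int_{\R^3}|f_1f_2|^2\le C(1,K^2)\,\prod_{j=1}^2\sum_{\sigma_j\in\cP_{K^{-1}}(\tau_j)}\|f_{\sigma_j}\|_{L^4(\R^3)}^2,
\]
where $C(1,K^2)\le A_\e K^{2\e}$ by the inductive hypothesis (the Fourier support $N_{1/R}(\ZH_{\tau_j})\subset N_{1/K^2}(\ZH_{\tau_j})$). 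Second, an inner decoupling bounds each $\|f_{\sigma_j}\|_{L^4}^2$ by $(R/K^2)^\e\sum_{\theta_j\in\sigma_j}\|f_{\theta_j}\|_{L^4}^2$, obtained via parabolic rescaling of the single cap $\sigma_j$ (the affine change of variables $\tilde\xi=K(\xi-\xi_{0,j})$, $\tilde\eta=K(\eta-\eta_{0,j})$, with a matching shear in $z$ preserving the identity $z=\xi\eta$) that converts the inner problem to a decoupling at scale $R/K^2$ over a unit cap. Combining the two steps yields $D(R)\le C\cdot A_\e^2\,K^{2\e}(R/K^2)^\e=C\cdot A_\e^2\, R^\e$, and a careful choice of $A_\e$ (together with iteration) closes the induction.

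The geometric input that makes the outer step possible is transversality of wave-packet tubes. The normal to $\ZH$ at $(\xi,\eta,\xi\eta)$ is proportional to $(-\eta,-\xi,1)$, so the hypothesis $|\xi_1-\xi_2|,|\eta_1-\eta_2|\sim 1$ translates into quantitative transversality of the normal directions (and hence of the $R^{1/2}\times R^{1/2}\times R$ wave-packet tubes) between $\tau_1$ and $\tau_2$ at every scale. Transversality is preserved by the parabolic rescaling of each $\sigma_j$, which is essential for the inner step to interact properly with the outer bookkeeping.

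The main obstacle is the inner decoupling: a parabolic rescaling of $\sigma_j$ alone does not rescale its transverse partner $\sigma_k$ to a canonical pair, so one cannot directly invoke the bilinear theorem on the rescaled pair and must instead perform a linear $\ell^2$-decoupling on each $f_{\sigma_j}$ individually. Since ordinary linear $\ell^2$-decoupling fails on the full surface $\ZH$ (the two families of ruling lines destroy Cordoba orthogonality, and the map $(\theta_1,\theta_2)\mapsto\ZH_{\theta_1}+\ZH_{\theta_2}$ has a 1-dimensional fiber corresponding to a conic $2uv-P_1v-P_2u+P_1P_2-P_3=0$), one must bypass this obstacle using a lower-dimensional $\ell^2$-decoupling along the ruling directions. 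In rotated coordinates $u=\xi+\eta$, $v=\xi-\eta$ the surface becomes $z=\tfrac12(u^2-v^2)$, and its slices by $u=$const (or $v=$const) are parabolas, to which classical 1-D $\ell^2$-decoupling applies; inserting this into the inner step absorbs the residual loss from the 1-dimensional fiber and closes the induction.
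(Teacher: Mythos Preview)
Your proposal has a genuine gap that you yourself identify but do not close. After the outer step you have
\[
\int |f_1 f_2|^2 \le C(1,K^2)\,\prod_{j=1}^2 \sum_{\sigma_j}\|f_{\sigma_j}\|_{L^4}^2,
\]
and from this point on the bilinear structure is gone: the right-hand side is a product of two \emph{separate} $\ell^2$ sums, so the only way forward is a \emph{linear} $\ell^2$-decoupling of each $\|f_{\sigma_j}\|_{L^4}^2$ into $R^{-1/2}$-squares $\theta_j$. After parabolic rescaling of $\sigma_j$ this is exactly linear $\ell^2$-decoupling for $\ZH_{[-1,1]^2}$ at scale $R/K^2$, which is known to fail (see \cite{BDh}, and the discussion around Theorem~\ref{GMOt}). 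Your proposed repair via 1-D decoupling along the slices $u=\text{const}$ or $v=\text{const}$ of $z=\tfrac12(u^2-v^2)$ does not produce decoupling into $R^{-1/2}$-\emph{squares}: decoupling one variable gives strips whose thickness in the transverse direction is far larger than $R^{-1}$, so a second 1-D decoupling in the other variable cannot reach scale $R^{-1/2}$. At best this route recovers the rectangle-decoupling of \cite{GMO} (Theorem~\ref{GMOt}), not the square decoupling the definition of $C(\delta,R)$ demands. There is also a bookkeeping issue: your recursion $D(R)\le C\,A_\e^2\,R^\e$ does not close for any choice of $A_\e$.

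The paper avoids this obstruction by never abandoning bilinearity. It introduces the two-parameter constant $C(\delta,R)$ and, instead of decoupling each cap linearly, uses Lemma~\ref{main-tool} to decouple the pair $(\tau_1,\tau_2)$ into \emph{rectangles} $\omega_j$ of dimensions $(\delta^2,\delta)$ whose long axis is determined by the direction $(\xi_2^*-\xi_1^*,\eta_1^*-\eta_2^*)$ of the line $\pi_1\cap\pi_2$ (which, crucially, has slope $\sim 1$ by transversality). A broad--narrow decomposition on each $\omega_j$ then either shrinks $\delta$ (narrow case), or produces new \emph{bilinear} pairs $(s_j,s_j')$ inside each $\omega_j$; after rescaling, the latter becomes an instance of $C(\cdot,R')$ with $R'\le R\delta^2$ (broad case). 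This yields the recursion $C(\delta,R)\le C_2\bigl(C(\delta/K,R)+K^{C_1}\sup_{R'\le R\delta^2}C(\delta,R')\bigr)$ of Proposition~\ref{diowue8tu954iy967u989}, which together with the base case $C(R^{-1/4},R)\lesssim 1$ of Proposition~\ref{jfiriurgutgur8g} closes the induction on both parameters. The essential point you are missing is that the \emph{direction} singled out by the transverse pair must be used to orient the intermediate rectangles; only then does each sub-problem inherit usable curvature (a parabola with curvature $\sim 1$ over a line of slope $\sim 1$) together with fresh transversality for the next iteration.
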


\smallskip

The proof of Theorem \ref{dec-thm} is inspired by the alternative proof of the elliptic $\ell^2$ decoupling theorem given in \cite{FGWW}.
The key new observation in the non-elliptic setting here is the following. Let $\tau_1$ and $\tau_2$ be two transverse $\de$-squares in $[-1,1]^2$, and for $j=1,2$, let $S_{\tau_j}=N_{\de^2}(H_{\tau_j})$ be an approximate $\de\times\de\times\de^2$-box.
Then, interpreting $S_{\tau_1}$ and $S_{\tau_2}$ as $\de^2$-neighborhoods of two planes $\pi_1,\pi_2$, the intersection $\pi_1\cap\pi_2$ is a line whose  projection   onto the horizontal $(\xi,\eta)$-plane  is transverse to both coordinate directions.
We refer the reader to the proof of Proposition \ref{jfiriurgutgur8g} for details.

At the foundation of all our orthogonality arguments lies the following classical equivalence.
    \begin{proposition}(Bilinear restriction)
    \label{labell}
    Let $\gamma_1,\gamma_2$ be two smooth curves in $\R^2$, such that any two of their normal vectors $n_1,n_2$ are (quantitatively) transverse. Assume $f_j$ is Fourier supported in the $\Delta$-neighborhood $N_\Delta(\gamma_j)$ of $\gamma_j$. Partition $N_\Delta(\gamma_j)$ into $\Delta$-squares $s_j$. Then
    \begin{equation}
\label{firegitguihui09yhui09y}    
    \int_{\R^2}|f_1f_2|^2\sim \sum_{s_1,s_2}\int_{\R^2}|P_{s_1}f_1P_{s_2}f_2|^2,
    \end{equation}
    where $P_sf$ is the Fourier restriction of $f$ to $s$.\end{proposition}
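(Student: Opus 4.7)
The plan is to move to the Fourier side via Plancherel and reduce the inequality to a bounded-overlap statement for the Minkowski sumsets $s_1+s_2$, which the transversality hypothesis will supply.

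By Plancherel, $\int_{\R^2}|f_1 f_2|^2 = \int_{\R^2}|\widehat{f_1}*\widehat{f_2}|^2$. Writing $\widehat{f_j}=\sum_{s_j}\widehat{P_{s_j}f_j}$, each summand supported in the $\Delta$-square $s_j\subset N_\Delta(\gamma_j)$, I would expand
\[
\widehat{f_1}*\widehat{f_2}\;=\;\sum_{s_1,s_2}\widehat{P_{s_1}f_1}*\widehat{P_{s_2}f_2},
\]
whose $(s_1,s_2)$-term is Fourier-supported in the sumset $s_1+s_2$, a set of diameter $O(\Delta)$. The target inequality will then follow from Cauchy--Schwarz pointwise in $\xi$ followed by Plancherel, \emph{provided} the family $\{s_1+s_2\}$ has bounded overlap in $\R^2$.

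The content of the proposition is therefore the overlap claim: for each $\xi\in\R^2$ at most $O(1)$ pairs $(s_1,s_2)$ satisfy $\xi\in s_1+s_2$. To verify it, I would parametrize the partition of $N_\Delta(\gamma_j)$ by centers $c_j$ lying on a $\Delta$-separated subset of $\gamma_j$, so that each $s_j$ is contained in a square of side $O(\Delta)$ around $c_j$. The condition $\xi\in s_1+s_2$ then translates to $c_1+c_2=\xi+O(\Delta)$. The derivative of the sum map $\gamma_1\times\gamma_2\to\R^2$, $(c_1,c_2)\mapsto c_1+c_2$, has Jacobian determinant $\det(\dot\gamma_1(c_1),\dot\gamma_2(c_2))$, equal (up to sign) to the sine of the angle between the tangent directions; in $\R^2$ this angle is the complement of the angle between the normals, so the quantitative transversality hypothesis bounds it below by a constant. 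Hence the sum map is a bi-Lipschitz local diffeomorphism with constants of order one, and the preimage of any $O(\Delta)$-ball in $\R^2$ has diameter $O(\Delta)$, containing only $O(1)$ points of the $\Delta$-separated grid of centers.

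Once the overlap bound is in hand, I would apply Cauchy--Schwarz pointwise to the identity
\[
(\widehat{f_1}*\widehat{f_2})(\xi)\;=\;\sum_{(s_1,s_2):\,\xi\in s_1+s_2}\bigl(\widehat{P_{s_1}f_1}*\widehat{P_{s_2}f_2}\bigr)(\xi),
\]
and then use Plancherel once more to reassemble the termwise bound into the claimed inequality. The main (and really only) obstacle is the overlap bound; once transversality is converted into quantitative non-degeneracy of the sum map, the rest is orthogonality bookkeeping.
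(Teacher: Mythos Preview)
Your argument is correct and is precisely the bi-orthogonality proof the paper alludes to (the paper does not spell out a proof, only remarks that it follows from ``simple geometric arguments that rely critically on the fact that $4=2\times 2$''). One small slip: in $\R^2$ the angle between the tangents equals the angle between the normals (rotating both vectors by $\pi/2$ preserves the angle), not its complement; this does not affect your conclusion, since quantitative transversality of the normals still gives quantitative transversality of the tangents and hence the lower bound on $|\det(\dot\gamma_1,\dot\gamma_2)|$ that drives the bounded-overlap claim.
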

    This equivalence may be easily proved using simple geometric arguments that rely critically on  the fact that $4=2\times 2$. This type of argument is sometimes referred to as {\em bi-orthogonality}. However, the paper \cite{BCT}
    revealed that \eqref{firegitguihui09yhui09y} is the two-dimensional manifestation of the more general multilinear restriction phenomenon in $\R^n$, that registers at the critical exponent $\frac{2n}{n-1}$. 
    With this perspective came a different proof of \eqref{firegitguihui09yhui09y},  that presents a severe departure from bi-orthogonality. 
    Our proof of Theorem \ref{dec-thm} embraces this philosophy, leading to a bi-orthogonality free argument for the bilinear decoupling inequality for the two-dimensional paraboloid (both elliptic and hyperbolic). 
    In the elliptic case, the standard bilinear-to-linear reduction immediately recovers the linear $\ell^2$ decoupling proved in \cite{Bourgain-Demeter-l2}, without the use of the trilinear restriction theorem from \cite{BCT}. 

    It remains an interesting open problem to extend our results to higher dimensions.  Our bi-orthogonality free argument opens up the possibility for a similar argument in $\R^n$, when $n$ is odd. By this we mean, a proof of $d$-linear $\ell^2$ decoupling in $\R^n$ using the $d$-linear restriction theorem in $\R^d$. 
    This  speculation is entertained by the coincidence between the multilinear restriction exponent $\frac{2d}{d-1}$ in $\R^d$ and the critical exponent $\frac{2(n+1)}{n-1}$ for $\ell^2$ decoupling in $\R^n$, when $n=2d-1$. However, while this numerology is consistent in critical places of the argument,  there are new difficulties in higher dimensions. These are associated with the more complex broad-narrow reduction, when trying to establish the analog of inequality
\eqref{igjg8u8956ug896u86u8}. We mention that the coincidence between the two exponents was recently exploited in \cite{ChOh}, in order to produce a proof of $\ell^p$ (rather than $\ell^2$) decoupling, albeit conditional to the Restriction Conjecture.

\begin{remark}
\rm

By a standard broad-narrow argument, Theorem \ref{dec-thm}  recovers the decoupling inequality for $\ZH$ (Theorem 1.8) from the recent paper \cite{GMO}.
See Section \eqref{app-linear-dec-sec}.
\end{remark}

\medskip

\subsubsection{Refined decoupling}

The statement of the refined decoupling inequality relies on the wave packet decomposition for functions supported on a thin neighborhood of a surface $S$.
We refer the reader to subsection \ref{ssWP} for notation and the details of this decomposition. 

\begin{definition}[Decoupling constant for bilinear refined decoupling]

Let $\tau_1,\tau_2\subset[-1,1]^2$ be two transverse squares (of arbitrary size), and let $X$ be the union of a collection of pairwise disjoint $R^{1/2}$-balls $Q$ inside $B_R$. 
For $j=1,2$, let $f_j=\sum_{T\in\ZT_j}f_T$ be a sum of scale $R$ wave packets so that ${\rm supp}(\wh f_j)\subset N_{R^{-1}}(\ZH_{\tau_j})$.
Suppose that  there is $M_j\geq1$ such that each $R^{1/2}$-ball $Q\subset X$ intersects at most $M_j$ many $R$-tubes from $\ZT_j$.

We define $C(R)$ to be the smallest constant such that for all such configurations, the following inequality  holds:
\begin{equation}
\label{ref-dec-esti}	
    \int_X|f_1f_2|^2\le C(R) (M_1M_2)^{1/2}\prod_{j=1}^2\Big(\sum_{T\in\ZT_j}\big\|f_{T_j}\big\|_4^4\Big)^{1/2}.   
\end{equation}	   
\end{definition}
We prove the following result. 
\begin{theorem}(Bilinear refined decoupling)
\label{refined-dec-thm}
For all $\e>0$, we have	$C(R)\lesssim_\e R^\e$.
\end{theorem}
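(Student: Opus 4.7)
My plan is to localize the bilinear $\ell^2$ decoupling (Theorem \ref{dec-thm}) to each $R^{1/2}$-ball $Q \subset X$, and then use the approximate orthogonality of parallel wave packets to rewrite the $\theta_j$-sums on the right-hand side as sums over individual wave packets $T$. First I would reduce, via an affine symmetry of $\ZH$ (which preserves transversality), to the case where $\tau_1, \tau_2$ are unit squares; this changes $R$ by at most a bounded factor, which is absorbed into $R^\varepsilon$.

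Next, on each $R^{1/2}$-ball $Q$, I would invoke the local form of Theorem \ref{dec-thm} to write
\begin{equation*}
\int_Q |f_1 f_2|^2 \lesssim_\varepsilon R^\varepsilon \prod_{j=1}^2 \Big(\sum_{\theta_j \in \cP_{R^{-1/2}}(\tau_j)} \|f_{j,\theta_j}\|_{L^4(w_Q)}^2\Big),
\end{equation*}
where $w_Q$ is a Schwartz weight adapted to $Q$; this localization is benign since the Fourier thickness $R^{-1}$ of each $f_j$ is much smaller than $R^{-1/2}$. The crucial step is the observation that for fixed $\theta_j$, all the wave packets $\{f_T : T \in \ZT_j,\ \theta(T) = \theta_j\}$ point in the same direction (the normal to $\ZH$ at the center of $\theta_j$), so their $R^{1/2} \times R^{1/2} \times R$ tubes tile space transversely and only $O(1)$ of them meet any given $R^{1/2}$-ball. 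This yields the approximate $L^4$-orthogonality
\begin{equation*}
\|f_{j,\theta_j}\|_{L^4(w_Q)}^4 \lesssim \sum_{T \in \ZT_j,\ \theta(T)=\theta_j,\ T\cap Q\neq\emptyset} \|f_T\|_{L^4(w_Q)}^4.
\end{equation*}

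Then I would apply Cauchy--Schwarz in $\theta_j$, using the hypothesis that at most $M_j$ tubes (and therefore at most $M_j$ caps $\theta_j$) meet $Q$, to arrive at
\begin{equation*}
\int_Q |f_1 f_2|^2 \lesssim R^\varepsilon (M_1 M_2)^{1/2} \prod_{j=1}^2 \Big(\sum_{T \in \ZT_j,\ T \cap Q \neq \emptyset} \|f_T\|_{L^4(w_Q)}^4\Big)^{1/2}.
\end{equation*}
Finally I would sum over the disjoint $Q \subset X$ and apply Cauchy--Schwarz once more over $Q$. Since the $Q$'s are pairwise disjoint, each resulting inner sum $\sum_Q \sum_{T \cap Q \neq \emptyset} \|f_T\|_{L^4(w_Q)}^4$ collapses (modulo Schwartz tails) to $\sum_{T \in \ZT_j} \|f_T\|_4^4$, delivering the desired bound $C(R) \lesssim_\varepsilon R^\varepsilon$.

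The main obstacle I expect is the rigorous implementation of the $L^4$-orthogonality of parallel wave packets in the presence of rapidly decaying Schwartz tails: although the sharp tubes genuinely tile space, their smooth analogues overlap slightly across neighboring $Q$'s, and absorbing these tails without losing more than $R^\varepsilon$ requires some care, e.g.\ a pigeonholing argument over dyadic tail scales together with a loss-recovery iteration on $\varepsilon$.
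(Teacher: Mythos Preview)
Your argument has a genuine gap at the very first step. You propose to apply the bilinear $\ell^2$ decoupling of Theorem~\ref{dec-thm} \emph{at scale $R$} locally on each $R^{1/2}$-ball $Q$, decoupling all the way down to $R^{-1/2}$-caps. This is false. The remark after the definition of $C(\delta,R)$ says the inequality localizes to $B_R$, not to $R^{1/2}$-balls; multiplying $f_j$ by a cutoff adapted to $Q$ thickens its Fourier support from $N_{1/R}(\ZH)$ to $N_{R^{-1/2}}(\ZH)$, so only scale-$R^{1/2}$ decoupling (into $R^{-1/4}$-caps) is available on $Q$. Concretely, if every tube in $\ZT_j$ passes through the origin and $Q$ is the $R^{1/2}$-ball there, the focal contribution near $0$ gives $\int_Q|f_1f_2|^2\gtrsim N^4$ with $N=\#\ZT_j$, while your claimed right-hand side is $\sim R^\varepsilon N^2 R^{3/2}$; for $N\sim R$ the ratio is $\sim R^{1/2}$. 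Note that the \emph{final} refined decoupling inequality does hold for this example (the global $\|f_T\|_4^4$ terms are larger than the local $\|f_T\|_{L^4(w_Q)}^4$ by a factor $R^{1/2}$), but your intermediate local inequality does not.

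What you are attempting is a one-shot version of the GIOW strategy. The paper explains that even the genuine GIOW iteration (decouple on $Q$ into $R^{-1/4}$-caps, then rescale and induct) fails here, because rescaling a single $R^{-1/4}$-cap destroys the bilinear transversality, while rescaling a transverse pair of $R^{-1/4}$-caps sends them far outside $[-1,1]^2$. The paper's proof instead runs a multi-scale broad--narrow decomposition on each $Q$ that preserves a bilinear structure at every stage (the functions $g_\omega$), with a carefully chosen stopping time $K_3^{-1}$; only when the broad branch survives down to scale $\lesssim K_3^{-1}$ is rescaling invoked (Proposition~\ref{jijgutjhguthughru}), at which point the two caps in question are close enough that parabolic rescaling keeps them inside $[-1,1]^2$. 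The Schwartz-tail issue you flagged is routine; the real obstruction is the one above.
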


\smallskip

The (linear) refined decoupling inequality for elliptic surfaces was introduced in \cite{GIOW}.
Its proof relied critically on the (linear) $\ell^2$-decoupling from \cite{Bourgain-Demeter-l2}.
More precisely, this $\ell^2$-decoupling was applied on the smaller balls $Q$, leading to an elegant and easy-to-iterate inequality for the (linear) refined decoupling constant $C_{lin}(R)$, of the form 
$$C_{lin}(R)\lesssim_\e R^{\e}C_{lin}(\sqrt{R}).$$
However, this approach fails rather dramatically in the non-elliptic case of our Theorem \ref{refined-dec-thm}, due to the inefficiency of rescaling in the bilinear setting.

While our proof of Theorem \ref{refined-dec-thm} borrows some inspiration from the argument in Theorem \ref{dec-thm}, it needs a few new significant layers that essentially add up to new methodology. 
One of its main innovations is a multi-scale decomposition that preserves the bilinear structure at every scale. We employ a careful selection of the scale increment, that is consistent with unambiguous orientation for the emerging rectangles.
Perhaps somewhat counter intuitively, we iterate decoupling on small balls $Q$ of radius $\sqrt{R}$, rather than on $B_R$. We introduce a stopping time $K_3^{-1}$ for the frequency scale. There are two possibilities for decoupling to come to a halt. 
One is that we cross the threshold $K_3^{-1}$ while maintaining the critical bilinear structure, which is amenable to rescaling. The other one is that we decouple all the way down to the smallest scale $R^{-1/2}$. In this latter case, by the uncertainty principle, we are in fact proving a very satisfactory reverse square function estimate.

\medskip

\subsection{Restriction estimates}

As an application of our bilinear refined decoupling theorem, we prove the following restriction estimate.
\begin{theorem}
\label{restriction-thm}
When $n=3$, the restriction estimate \eqref{restriction-esti-1} is true when $S$ is the truncated hyperbolic paraboloid $\ZH_{[-1,1]^2}
$ and $p>22/7$. 
\end{theorem}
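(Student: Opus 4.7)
The plan is to follow the bilinear-plus-polynomial-partitioning framework that Wang–Wu used to establish the elliptic restriction estimate at $p > 22/7$, substituting our Theorem \ref{refined-dec-thm} for the elliptic refined decoupling inequality. First, by a standard broad-narrow decomposition at some large scale $K$, I reduce Theorem \ref{restriction-thm} to a bilinear extension estimate of the form
\begin{equation*}
\Big\|\wh{f_1 d\sigma}\cdot\wh{f_2 d\sigma}\Big\|_{L^{p/2}(B_R)} \lesssim_\e R^{\e}\,\|f_1\|_{2}\,\|f_2\|_{2}
\end{equation*}
for $f_j$ supported on $\ZH_{\tau_j}$ and transverse squares $\tau_1, \tau_2$ in the sense of Definition \ref{transversality}. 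The narrow contribution is absorbed by induction on $R$ using the linear $\ell^2$-decoupling for $\ZH$, which follows from Theorem \ref{dec-thm} by the broad-narrow procedure already mentioned in the excerpt.

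For the bilinear piece, I would run Guth-type polynomial partitioning on the superlevel set of the bilinear product. On each cell, wave packets split into cellular, tangential, and transverse types. The cellular case iterates at a smaller scale; the transverse case is handled by crude $L^2$ bounds since only few tubes meet those cells. The decisive tangential case, where the tubes cluster in a thin neighborhood of the partitioning variety, is attacked by our bilinear refined decoupling inequality \eqref{ref-dec-esti}: one bounds the $L^{p/2}$ norm on the relevant union of $R^{1/2}$-balls using \eqref{ref-dec-esti}, where the multiplicities $M_1, M_2$ are controlled by the polynomial Wolff axioms applied to tubes tangent to the partitioning surface. Balancing these estimates exactly as in the elliptic case yields the threshold $p > 22/7$.

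The main obstacle is that $\ZH$ is doubly ruled, so wave packets from a single $\tau_j$ may concentrate along a plane, unlike in the elliptic case where nonvanishing Gaussian curvature rules this out. The saving is exactly the one already exploited in Theorem \ref{dec-thm}: every line contained in $\ZH$ is parallel to one of the coordinate planes $\{\xi=0\}$ or $\{\eta=0\}$, and Definition \ref{transversality} forces the tubes associated with $\tau_1$ to be quantitatively transverse to those associated with $\tau_2$ after this coordinate structure is accounted for. Consequently, any concentration of $\ZT_1$-tubes along a line in $\ZH_{\tau_1}$ remains transverse to the tubes of $\ZT_2$, and the bilinear geometry behaves as favorably as in the elliptic setting. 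This transversality, propagated through the polynomial partitioning iteration, is what allows the Wang–Wu exponent computation to close verbatim in our setting; the rest is routine bookkeeping, including $\e$-removal to pass from the $R^\e$-lossy bilinear estimate to the sharp inequality \eqref{restriction-esti-1} in the open range $p > 22/7$.
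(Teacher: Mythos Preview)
Your proposal misidentifies the Wang--Wu framework and, as written, does not reach $p>22/7$. The paper does \emph{not} use Guth-style polynomial partitioning (cellular/tangential/transverse). Instead, following \cite{Wang-Wu} exactly, it runs a two-ends reduction on the broad part: after pigeonholing the wave packets by the density $\lambda$ of their shading $Y(T)=T\cap X$ and the number $\beta$ of heavy tube segments, one either lands in a non-two-ends scenario (handled by induction at scale $r^{1-\e^2}$) or in a two-ends scenario. In the latter case, the multiplicity bound $M\lessapprox m(\lambda\beta)^{-3/4}r^{1/4}$ is supplied by the Kakeya-type incidence estimate of \cite[Proposition~3.2]{Wang-Wu} (Proposition~\ref{kakeya-prop} here), and \emph{that} is what is fed into the bilinear refined decoupling inequality \eqref{ref-dec-esti}. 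Interpolating the resulting $L^4$ bound with a trivial $L^2$ bound (Lemma~\ref{lem: l2}) yields the exponent $22/7$ by the numerology $\eqref{l4}^{4/7}\cdot\eqref{l2-1}^{3/7}$. Polynomial partitioning with polynomial Wolff axioms, as you sketch, is the route of \cite{Cho-Lee} and \cite{Guo-Oh}; it gives $p>3.25$, not $22/7$. Your tangential-case plan invokes \eqref{ref-dec-esti}, but you supply no mechanism to control $M_1,M_2$ well enough to beat $3.25$; the polynomial Wolff axioms alone do not do this.

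There are two further gaps. First, your narrow case is not handled by ``linear $\ell^2$-decoupling for $\ZH$''; in the paper it is handled by anisotropic parabolic rescaling plus induction on the restriction estimate itself, and crucially there is an extra narrow case specific to $\ZH$ where $|Ef|$ is dominated by $|Ef_{S}|$ for a thin \emph{strip} $S$ (horizontal or vertical), reflecting the ruled structure you mention. This strip case requires the non-isotropic rescaling $(\xi_1,\xi_2)\mapsto(K\xi_1,\xi_2)$, which leaves $\ZH$ invariant; decoupling does not close the induction here. Second, the bilinear target you write, $\|\widehat{f_1 d\sigma}\,\widehat{f_2 d\sigma}\|_{L^{p/2}(B_R)}\lesssim_\e R^\e\|f_1\|_2\|f_2\|_2$, is too strong at $p=22/7$ and is not what the paper proves; the actual estimate is the mixed-norm inequality \eqref{mixed-norm} for the broad function, with the extra factor $\sup_\theta\|f_\theta\|_{L^2_{\mathrm{avg}}}^{p-2}$, which is then converted to an $L^p\to L^p$ bound by a level-set reduction on $f$.
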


The previous best-known result is due to \cite{Cho-Lee}, where the authors use polynomial partitioning to prove \eqref{restriction-esti-1} for $p>3.25$.
For a generalization of this result to compact surfaces in $\ZR^3$ with non-zero Gaussian curvature, see \cite{Guo-Oh}.

\smallskip

The proof of Theorem \ref{restriction-thm} uses incidence estimates established in \cite{Wang-Wu}.
We remark that when it comes to our use of incidence geometry, there is no difference between the hyperbolic and the elliptic paraboloid. This is because in both cases, the normal vector is both injective and (essentially) surjective.  

\bigskip 

\begin{acknowledgement}
\rm
		We are grateful to Jacob Glidewell and Shengwen Gan for pointing out typos in the earlier version of the manuscript.
	\end{acknowledgement}
    
\bigskip

\noindent {\bf Notation:} 
Throughout the paper, we use $\# E$ to denote the cardinality of a finite set.
For $A,B\geq 0$, we use $A\lesssim B$ to mean $A\leq CB$ for an absolute constant (independent of scales) $C$, and use $A\sim B$ to mean $A\lesssim B$ and $B\lesssim A$.
For a given $\de<1$, we use $ A \lessapprox B$ to denote $A\leq c_{\upsilon}\de^{-\upsilon} B$ for all $\upsilon>0$ (same notation applies to a given $R>1$ by taking $\de=R^{-1}$). 
We use $B_R$ to denote a ball of radius $R$ in $\ZR^3$.

\bigskip

\section{The bilinear \texorpdfstring{$\ell^2$}{}-decoupling inequality}

	We will prove Theorem \ref{dec-thm} using induction on both $\delta$ and $R$.
	Note that if $\tau_1,\tau_2$ are transverse and $\tau_j'\subset \tau_{j}$, then $\tau_1'$ and $\tau_2'$ are also transverse. 
    Thus, a simple application of the triangle inequality (cover $\delta K$-caps by $\delta$-caps) shows that if $K\ge 1$
	\begin{equation}
		\label{hguugyrtugyuy5yty}
		C(\delta K,R)\lesssim K^{O(1)}C(\delta,R).
	\end{equation}    
    Here is our chief analytic tool.
	\begin{lemma}
		\label{main-tool}
		Consider two planes $\pi_1$, $\pi_2$, whose angle is $\sim 1$. Let $\ell$ be their common line. Assume $F_j$ is Fourier supported on the $\Delta$-neighborhood $N_\Delta(\pi_j)$ of $\pi_j$. Partition $N_\Delta(\pi_j)$ into rectangular boxes $b_j$ congruent to $[-\Delta,\Delta]\times [-\Delta,\Delta]\times \R$, whose infinite axis is parallel to $\ell$. 
	Then 
		$$\int_{\R^3}|F_1F_2|^2\sim \sum_{b_1,b_2}\int_{\R^3}|P_{b_1}F_1P_{b_2}F_2|^2,$$
		where $P_b F$ is the Fourier restriction of $F$ to $b$.
	\end{lemma}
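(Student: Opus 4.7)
The plan is to prove the lemma via a direct bi-orthogonality argument, interpreting it as the three-dimensional lift of the two-dimensional bilinear restriction (Proposition \ref{labell}) along the translation-invariant direction $\ell$.

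After a rotation in Fourier space we may assume that $\ell$ coincides with the $\xi_3$-axis. Then $\pi_j=L_j\times\R_{\xi_3}$, where $L_j$ is a line through the origin in the $(\xi_1,\xi_2)$-plane, and the angle hypothesis on $\pi_1,\pi_2$ becomes the quantitative transversality of $L_1,L_2$. Correspondingly, $N_\Delta(\pi_j)=N_\Delta(L_j)\times\R_{\xi_3}$, and each box $b_j$ factors as $s_j\times\R_{\xi_3}$, where $s_j$ ranges over a partition of $N_\Delta(L_j)$ into $\Delta$-squares.

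For each fixed $z\in\R$, define the slice $G_j^z(x,y):=F_j(x,y,z)$. A direct Fourier computation shows that $\widehat{G_j^z}$ is supported in $N_\Delta(L_j)$, and that slicing commutes with Fourier restriction in the sense that $(P_{b_j}F_j)(x,y,z)=(P_{s_j}G_j^z)(x,y)$. Since $L_1,L_2$ are transverse lines in $\R^2$ — in particular smooth curves with transverse (constant) normals — Proposition \ref{labell} yields
$$\int_{\R^2}|G_1^zG_2^z|^2\,dxdy\lesssim\sum_{s_1,s_2}\int_{\R^2}|P_{s_1}G_1^z\,P_{s_2}G_2^z|^2\,dxdy.$$
Integrating this pointwise-in-$z$ inequality and applying Fubini delivers the three-dimensional conclusion.

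There is no genuine obstacle: the only geometric content is the tiling property that the Fourier supports $b_1+b_2=(s_1+s_2)\times\R_{\xi_3}$ overlap with bounded multiplicity, which follows because the transversality of $L_1,L_2$ makes the map $(p_1,p_2)\in L_1\times L_2\mapsto p_1+p_2\in\R^2$ a linear isomorphism with condition number $\sim 1$. Thus the lemma is essentially a 2D bilinear $L^4$ estimate dressed as a 3D one, the dressing being free thanks to translation invariance along $\ell$.
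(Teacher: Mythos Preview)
Your proof is correct and follows essentially the same route as the paper's: rotate so that $\ell$ is a coordinate axis, apply Proposition \ref{labell} to the transverse lines $L_1,L_2=\pi_1\cap\ell^{\perp},\pi_2\cap\ell^{\perp}$ in the orthogonal plane, and extend to $\R^3$ via Fubini along the $\ell$-direction. The paper's proof is a one-line version of exactly this; your slicing computation $(P_{b_j}F_j)(x,y,z)=(P_{s_j}G_j^z)(x,y)$ just makes the Fubini step explicit.
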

	\begin{proof}
		Use Proposition \ref{labell} in a plane orthogonal to $\ell$, whose intersections with $\pi_1,\pi_2$ are transverse lines. Extend the equivalence to the planes via Fubini. \qedhere
		
	\end{proof}
	
	Here is the base case of the induction.
	
	\begin{proposition}
		\label{jfiriurgutgur8g}	
		$$C(R^{-1/4},R)\lesssim 1.$$
	\end{proposition}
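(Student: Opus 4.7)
The key observation is that at scale $\delta=R^{-1/4}$ the Fourier thickness $R^{-1}=\delta^4$ is much smaller than the tangent-plane deviation $\delta^2=R^{-1/2}$ of $\ZH_{\tau_j}$. Setting $\Delta:=R^{-1/2}$, it follows that $N_{1/R}(\ZH_{\tau_j})\subset N_{C\Delta}(\pi_j)$, where $\pi_j$ is the tangent plane to $\ZH$ at the center of $\tau_j$. Transversality of $\tau_1,\tau_2$ implies that the unit normals $n_j=(-\eta_j,-\xi_j,1)/|\cdot|$ make angle $\sim 1$, and that the intersection line $\ell=\pi_1\cap\pi_2$ has direction $n_1\times n_2=(\xi_2-\xi_1,\eta_1-\eta_2,\eta_1\xi_2-\xi_1\eta_2)$, whose first two components are of size $\sim 1$.

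Applying Lemma~\ref{main-tool} to $\pi_1,\pi_2$ with parameter $\Delta$ and partitioning $N_\Delta(\pi_j)$ into $\Delta\times\Delta\times\R$ tubes $b_j$ parallel to $\ell$, we obtain
\[
\int_{\R^3}|f_1f_2|^2\lesssim \sum_{b_1,b_2}\int|P_{b_1}f_1\,P_{b_2}f_2|^2\le \sum_{b_1,b_2}\|P_{b_1}f_1\|_4^2\|P_{b_2}f_2\|_4^2=\prod_{j=1}^{2}\Big(\sum_{b_j}\|P_{b_j}f_j\|_4^2\Big),
\]
where the second step uses Cauchy--Schwarz. It thus suffices to show $\sum_{b_j}\|P_{b_j}f_j\|_4^2\lesssim \sum_{\theta_j\in\cP_\Delta(\tau_j)}\|f_{\theta_j}\|_4^2$ for each $j$.

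For a fixed $b_j$, the set $b_j\cap N_{1/R}(\ZH_{\tau_j})$ is (essentially) a $1/R$-neighborhood of a curve $\gamma_{b_j}\subset\ZH_{\tau_j}$, namely the lift of the line $b_j\cap\pi_j$ projected to the $(\xi,\eta)$-plane. Since this line has direction with slope $\sim 1$, parameterizing it as $(\xi_0+at,\eta_0+bt)$ with $|a|,|b|\sim 1$ gives $\gamma_{b_j}(t)=(\xi_0+at,\eta_0+bt,ab\,t^2+\textup{linear})$, whose third coordinate has non-vanishing second derivative. Letting $\Theta(b_j)$ be the collection of $\sim \delta/\Delta=R^{1/4}$ squares $\theta_j\in\cP_\Delta(\tau_j)$ whose surface patches are strung along $\gamma_{b_j}$ and decomposing $P_{b_j}f_j=\sum_{\theta\in\Theta(b_j)}f_\theta$, I would run the standard C\'ordoba bi-orthogonality argument on $\|P_{b_j}f_j\|_4^4=\|\sum_{\theta_1,\theta_2}f_{\theta_1}\overline{f_{\theta_2}}\|_2^2$. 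The Minkowski differences $\theta_1-\theta_2$ have multiplicity $O(1)$, since fixing the first two coordinates determines $t_1-t_2$ and the third coordinate $(t_1-t_2)[ab(t_1+t_2)+\textup{const}]$ then determines $t_1+t_2$ because $|ab|\sim 1$. This yields
\[
\|P_{b_j}f_j\|_4^4\lesssim \sum_{\theta_1,\theta_2\in\Theta(b_j)}\int|f_{\theta_1}|^2|f_{\theta_2}|^2\le\Big(\sum_{\theta\in\Theta(b_j)}\|f_\theta\|_4^2\Big)^{\!2},
\]
so $\|P_{b_j}f_j\|_4^2\lesssim \sum_{\theta\in\Theta(b_j)}\|f_\theta\|_4^2$.

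Summing over $b_j$ then completes the argument: the strips $b_j\cap\pi_j$ have width $\sim\Delta$ and are $\Delta$-separated perpendicular to $\ell$, so each $\Delta$-square $\theta_j$ belongs to only $O(1)$ of the families $\Theta(b_j)$. Hence $\sum_{b_j}\|P_{b_j}f_j\|_4^2\lesssim\sum_{\theta_j}\|f_{\theta_j}\|_4^2$, which gives $C(R^{-1/4},R)\lesssim 1$. The main obstacle is the C\'ordoba step: one must verify that the axis-aligned $\Delta$-squares $\theta_j$ in $\Theta(b_j)$ genuinely behave as arcs on $\gamma_{b_j}$, and that the multiplicity bound relies crucially on $|ab|\sim 1$. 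Attempting the same $L^4$ estimate directly on the full cap $\tau_j$ (without first applying Lemma~\ref{main-tool}) would fail, as the multiplicity of the Minkowski sum map $\cP_\Delta(\tau_j)\times\cP_\Delta(\tau_j)\to\R^3$ then blows up to $R^{1/4}$ because of the hyperbolic level curves of constant sum -- this is precisely the non-elliptic obstruction that the preliminary plane-decomposition is designed to remove.
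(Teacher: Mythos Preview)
Your proposal is correct and follows essentially the same two-step structure as the paper's proof: first apply Lemma~\ref{main-tool} at scale $\Delta=R^{-1/2}$ to pass to thin strips (the paper's $\omega_j$, your $b_j$) pointing in the direction of $\ell$, then decouple each strip into $R^{-1/2}$-squares using that $\ZH$ restricted to such a strip is essentially a parabolic arc with curvature $\sim 1$. The only cosmetic difference is that the paper invokes cylindrical $\ell^2(L^4)$ decoupling for the parabola as a black box in the second step, whereas you write out the underlying C\'ordoba bi-orthogonality argument explicitly; at $L^4$ these are the same thing.
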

	\begin{proof}
		Fix transverse $R^{-1/4}$-squares $\tau_1$, $\tau_2$, centered at $(\xi_1^*,\eta_1^*)$, $(\xi_2^*,\eta_2^*)$.
		Fix two functions $f_j$ Fourier supported on $N_{R^{-1}}(\ZH_{\tau_j})$.
		
		Consider the $R^{-1/2}$-neighborhood of $\ZH_{\tau_j}$. It lies inside the $O(R^{-1/2})$-neighborhood of the tangent plane at any $(\xi_j,\eta_j,\xi_j\eta_j)\in\ZH_{\tau_j}$, whose normal is 
		$$n(\xi_j,\eta_j)=(\eta_j,\xi_j,-1).$$
		The intersection of these planes is the line $\ell(\xi_1,\eta_1,\xi_2,\eta_2)$
		with direction
		$$(\xi_2-\xi_1,\eta_1-\eta_2,\eta_1\xi_2-\eta_2\xi_1).$$
		
        \medskip
		
	\noindent{\bf Step 1.} 
		We apply Lemma \ref{main-tool} with $\Delta=R^{-1/2}$ and get
		\begin{equation}
			\label{jjurgytg8yu869}	
			\int_{\R^3}|f_1f_2|^2\lesssim \sum_{\omega_1,\omega_2}\int_{\R^3}|f_{\omega_1}f_{\omega_2}|^2.
		\end{equation}
		Here, $\omega_j$ are rectangles that partition (or rather cover; this distinction will be ignored) $\tau_j$, with dimensions $\sim (R^{-1/2},R^{-1/4})$ and  long side in the direction $(\xi_2^*-\xi_1^*,\eta_1^*-\eta_2^*)$. Let us explain why there is no ambiguity with this choice. The orientation of a rectangle is only defined within an error comparable to its eccentricity $R^{-1/4}$.
		For any other choice of $(\xi_j',\eta_j')\in \tau_j$, the angle between directions $(\xi_2-\xi_1,\eta_1-\eta_2)$ and $(\xi_2'-\xi_1',\eta_1'-\eta_2')$ can be easily seen to be $\lesssim R^{-1/4}$. Thus,  $\omega_j$ are essentially uniquely determined.  With our concrete choice for the direction $(\xi_2^*-\xi_1^*,\eta_1^*-\eta_2^*)$, the sets $\omega_j$ are fully determined.
		
		For future reference, we note that the slope $\frac{\eta_1^*-\eta_2^*}{\xi_2^*-\xi_1^*}$ of this direction equals minus the slope of $\ell(\tau_1,\tau_2).$ 	

        \bigskip 
	
	\noindent	{\bf Step 2.} We examine each $f_{\omega_j}$.  Its Fourier support lies inside $N_{1/R}(\ZH_{\omega_j})$. The long side of $\omega_j$ points in a direction with slope of absolute value $\sim 1$. The part of $\ZH$ lying above any such line is a parabola with curvature $\sim 1$. The whole $\ZH_{\omega_j}$ is then within the $O(R^{-1})$-neighborhood of a parabolic cylinder with ``height" $\sim R^{-1/2}$, over an arc of the parabola of length $\sim R^{-1/4}$. Thus, the Fourier support of $f_{\omega_j}$ lies inside a similar neighborhood, as $O(R^{-1/2})+O(1/R)=O(R^{-1/2})$.
		
		We may use cylindrical $\ell^2(L^4)$ decoupling (planar decoupling for the arc of the parabola combined with Fubini in the ``height" direction)  to find
		$$\big(\int_{\R^3}|f_{\omega_j}|^4\big)^{1/2}\lesssim \sum _{\theta_j\in \cP_{R^{-1/2}}(\omega_j)}\|f_{\theta_j}\|^2_{L^4(\R^3)}.$$ 
		Combining this with \eqref{jjurgytg8yu869} and H\"older's inequality delivers the conclusion.\qedhere

	\end{proof}	 
	We note that $\delta\sim R^{-1/4}$ is the largest $\delta$ for which we get the desired decoupling directly. Larger values will require induction on scales.  We fix the parameter $K$, that will later be chosen to be $\lessapprox 1$.
    \begin{proposition}
		\label{diowue8tu954iy967u989}
		If $R\ge K\delta^{-3}$ we have (for some universal $C_1$, $C_2$, independent of $K,\delta,R$)
		\begin{equation}
			\label{bfjrhfurhgrugy}
			C(\delta,R)\le C_2( C(\delta/K,R)+\sup_{R'\le R\delta^2}K^{C_1}C(\delta,R')).
		\end{equation}
	\end{proposition}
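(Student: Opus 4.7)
My approach is an induction-on-scales argument in the spirit of Bourgain--Demeter, using Lemma \ref{main-tool} once to produce orthogonality at an intermediate frequency scale and then splitting the outcome into two branches matching the two terms on the right-hand side of \eqref{bfjrhfurhgrugy}. The first step is to apply Lemma \ref{main-tool} at scale $\Delta=\delta^2$. The hypothesis $R\ge K\delta^{-3}$ gives $R^{-1}\le K^{-1}\delta^3\le\delta^2$, so the Fourier support $N_{1/R}(\ZH_{\tau_j})$ of each $f_j$ fits inside the $\delta^2$-neighborhood of the tangent plane $\pi_j$ at the center of $\tau_j$; transversality of $\tau_1,\tau_2$ makes the normals of $\pi_1,\pi_2$ quantitatively transverse, so the lemma yields
$$\int_{\R^3}|f_1f_2|^2 \lesssim \sum_{\omega_1,\omega_2}\int_{\R^3}|f_{\omega_1}f_{\omega_2}|^2,$$
where each $\omega_j\subset\tau_j$ is an essentially canonical $\sim\delta\times\delta^2$ rectangle whose long side is parallel to the projection onto the $(\xi,\eta)$-plane of the intersection line of $\pi_1,\pi_2$, exactly as in Proposition \ref{jfiriurgutgur8g}.

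The second step is a stopping-time/broad--narrow dichotomy at spatial scale $R'=R\delta^2$. I cover $B_R$ by balls of radius $R'$; for each pair $(\omega_1,\omega_2)$ and each such ball $B$ I branch according to whether the mass of $|f_{\omega_1}f_{\omega_2}|^2$ on $B$ can be further decoupled at the finer frequency scale $\delta/K$ after covering each $\omega_j$ by $(\delta/K)$-subsquares $\sigma_j$ of $\tau_j$ (which is legal since we may assume $K\le 1/\delta$, so $\delta^2\le\delta/K$ and each $\omega_j$ is already contained in $O(1)$ many such $\sigma_j$), or whether, instead, the mass concentrates inside a single ball $B$ of radius $R'$. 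In the ``broad'' branch the inductive hypothesis $C(\delta/K,R)$ applied to the transverse pair $(\sigma_1,\sigma_2)$ directly delivers the first term of \eqref{bfjrhfurhgrugy}. In the ``narrow'' branch I localize to $B$ and apply $C(\delta,R')$ at the same frequency scale but smaller spatial scale, delivering the $K^{C_1}C(\delta,R')$ term, with $K^{C_1}$ absorbing the pigeonhole and wave-packet overlap losses from the spatial localization. Summing over pairs $(\omega_1,\omega_2)$ and collapsing the partitions back to $\theta_j\in\cP_{R^{-1/2}}(\tau_j)$ produces the stated inequality.

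The main obstacle is engineering the dichotomy so that the broad branch produces no extra $K$-factor on the first term, since the statement has a universal $C_2$ and all $K$-dependence must be confined to the second term. This forces a careful stopping-time selection, tracking the precise spatial scale at which decoupling on the $(\delta/K)$-subsquares remains tight, together with a delicate use of the uncertainty principle to identify when genuine concentration (rather than broadness) is taking place. A secondary subtlety, essentially settled by the orientation analysis already used in the proof of Proposition \ref{jfiriurgutgur8g}, is ensuring that the $\omega_j$-partition (oriented along the intersection line) is compatible with the canonical $(\delta/K)$-subsquare partition of $\tau_j$ without combinatorial loss in the first term.
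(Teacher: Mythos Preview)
Your Step 1 matches the paper exactly, but Step 2 has two genuine gaps.

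\textbf{First gap: the covering claim is false.} You assert that each $\omega_j$ is contained in $O(1)$ many $(\delta/K)$-squares $\sigma_j$. But $\omega_j$ has dimensions $\sim(\delta^2,\delta)$: its long side has length $\sim\delta$, so it meets $\sim K$ of the $(\delta/K)$-squares, not $O(1)$. This is precisely why the paper does \emph{not} pass directly from $\omega_j$ to $(\delta/K)$-squares. Instead, it subdivides each $\omega_j$ \emph{along its long axis} into $K$ sub-rectangles $s_j$ of dimensions $\sim(\delta^2,\delta/K)$, and runs a pointwise broad--narrow dichotomy in \emph{frequency}: either the square function $(\sum_{s_j}|f_{s_j}|^2)^{1/2}$ dominates $|f_{\omega_j}|$, or two separated pieces $s_j\not\sim s_j'$ do. In the narrow (square-function) case, $L^2$-orthogonality on balls of radius $K/\delta$ gives the $C(\delta/K,R)$ term with \emph{no} $K$-loss, because each $s_j$ fits in a single $(\delta/K)$-square and the pairs $(s_1,s_2)$ remain transverse. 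Your vague ``stopping-time selection'' does not substitute for this mechanism.

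\textbf{Second gap: the spatial localization does not close the induction.} In your narrow branch you localize to a ball $B$ of radius $R'=R\delta^2$ and invoke $C(\delta,R')$. But by definition, $C(\delta,R')$ decouples into caps of side $(R')^{-1/2}=R^{-1/2}\delta^{-1}$, not $R^{-1/2}$; you are off by a factor $\delta^{-1}$ in the output frequency scale, and no ``pigeonhole and wave-packet overlap'' loss of size $K^{C_1}$ can repair this. The paper produces the second term by a completely different route: in the broad case one has separated pieces $s_j,s_j'\subset\omega_j$ at distance $d\in[\delta/K,\delta]$; H\"older reduces to $\int|f_{s_j}f_{s_j'}|^2$; then a \emph{parabolic rescaling} by $1/d$ (horizontally) and $1/d^2$ (vertically) sends $\ZH$ to itself, sends $s_j,s_j'$ to $1$-separated sets, and sends $R^{-1/2}$-squares to $(Rd^2)^{-1/2}$-squares. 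One applies planar bilinear restriction followed by $C(\delta^2/d,Rd^2)\le C(\delta K,R')\lesssim K^{O(1)}C(\delta,R')$ with $R'\le R\delta^2$, and rescales back. The rescaling is what makes the frequency scales match; it is entirely absent from your proposal.
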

	\begin{proof}
		Fix transverse $\delta$-squares $\tau_1$, $\tau_2$ centered at $(\xi_1^*,\eta_1^*)$, $(\xi_2^*,\eta_2^*)$, and two functions $f_j$ Fourier supported on $N_{R^{-1}}(\ZH_{\tau_j})$.
	\medskip
		
	\noindent	{\bf Step 1.} By repeating the argument from Step 1 in the proof of Proposition \ref{jfiriurgutgur8g}, we have 
		\begin{equation}
			\label{jjurgytg8yu869i09i09hhi90h}	
			\int_{\R^3}|f_1f_2|^2\lesssim \sum_{\omega_1,\omega_2}\int_{\R^3}|f_{\omega_1}f_{\omega_2}|^2.
		\end{equation}
		Here $\omega_j$ are thin rectangles with dimensions $\sim (\delta^2,\delta)$, pointing in the direction $(\xi_2^*-\xi_1^*,\eta_1^*-\eta_2^*)$ .

    \bigskip
		
	\noindent	{\bf Step 2.} We divide each $\omega_j$ into $K$ rectangles $s_j$ with dimensions $\sim (\delta^2,\delta/K)$, so
		$$f_{\omega_j}=\sum_{s_j\subset \omega_j}f_{s_j}.$$
		We write $s_j\not\sim s_j'$ if $s_j$ is not adjacent to $s_j'$.
	For each $x\in\R^3$,
		$$|f_{\omega_j}(x)|\le 10(\sum_{s_j\subset \omega_j}|f_{s_j}(x)|^2)^{1/2}+K^2\max_{s_j\not\sim s_{j}'\subset \omega_j}|f_{s_j}(x)f_{s_j'}(x)|^{1/2}.$$
		We call the first expression $S_{\omega_j}f(x)$.
		
		Let $B$ be a ball of radius $K/\delta$. Partition $B$ into sets $B_n$ and  $B_b$ as follows. We put $x$ in $B_n$ if $|f_{\omega_j}(x)|\lesssim S_{\omega_j}f(x)$ for at least one $j\in\{1,2\}$. It follows that
		$$\int_{B_n}|f_{\omega_1}f_{\omega_2}|^2\lesssim \int_{B}(S_{\omega_1}f)^2|f_{\omega_2}|^2+\int_{B}(S_{\omega_2}f)^2|f_{\omega_1}|^2.$$
		Since each $s_j$ lies inside a disk of radius $\sim \delta/K$,  the uncertainty principle shows that $|f_{s_j}(x)|$, and thus also each $S_{\omega_j}f(x)$, is essentially constant on $B$. Call $S_{\omega_j}f(B)$ the value of this constant. 
		It follows that for $j\not=j'\in\{1,2\}$,
		$$\int_{B}(S_{\omega_j}f)^2|f_{\omega_{j'}}|^2\approx (S_{\omega_j}f(B))^2\int_B|f_{\omega_{j'}}|^2.$$
		Furthermore, due to $L^2$ orthogonality we have
		$$\int_B|f_{\omega_{j'}}|^2\lesssim \int_B (S_{\omega_{j'}}f)^2.$$
		We conclude that
		$$\int_{B_n}|f_{\omega_1}f_{\omega_2}|^2\lesssim \int_{B}(S_{\omega_1}f)^2(S_{\omega_2f})^2.$$
		Also,
		$$\int_{B_b}|f_{\omega_1}f_{\omega_2}|^2\lesssim K^{O(1)}\max_{s_1\not\sim s_{1}'\subset \omega_1}\max_{s_2\not\sim s_{2}'\subset \omega_2}\int_B |f_{s_1}f_{s_1'}f_{s_2}f_{s_2'}|.$$We conclude this step by summing the last two inequalities over a finitely overlapping cover of $\R^3$ by balls $B$
		$$\int_{\R^3}|f_{\omega_1}f_{\omega_2}|^2\lesssim$$
		\begin{equation}
			\label{igjg8u8956ug896u86u8}	
			\int_{\R^3}(S_{\omega_1}f)^2(S_{\omega_2f})^2+K^{O(1)}\max_{s_1\not\sim s_{1}'\subset \omega_1}\max_{s_2\not\sim s_{2}'\subset \omega_2}\int_{\R^3} |f_{s_1}f_{s_1'}f_{s_2}f_{s_2'}|.
		\end{equation}
        Before we move on, let us note that our proof of \eqref{igjg8u8956ug896u86u8}	did not use geometric arguments specific to the use of $L^4$.  
	\medskip
		
	\noindent	{\bf Step 3.} We analyze the first term in \eqref{igjg8u8956ug896u86u8}.
		$$\int_{\R^3}(S_{\omega_1}f)^2(S_{\omega_2}f)^2=\sum_{s_j\subset \omega_j}\int_{\R^3}|f_{s_1}f_{s_2}|^2.$$
		Since $s_1,s_2$ lie inside transverse  $\delta/K$-squares, 
		$$\int_{\R^3}|f_{s_1}f_{s_2}|^2\le C(\delta/K,R)\prod_{j=1}^2\Big(\sum_{\theta_j\in \cP_{R^{-1/2}}(s_j)}\big\|f_{\theta_j}\big\|_{L^4(\R^3)}^2\Big).$$
		Thus,
		$$\int_{\R^3}(S_{\omega_1}f)^2(S_{\omega_2f})^2\le C(\delta/K,R)\prod_{j=1}^2\Big(\sum_{\theta_j\in \cP_{R^{-1/2}}(\omega_j)}\big\|f_{\theta_j}\big\|_{L^4(\R^3)}^2\Big).$$
		Summing in $\omega_1,\omega_2$ leads to the first upper bound in \eqref{bfjrhfurhgrugy}.
	\bigskip
		
	\noindent	{\bf Step 4.} We analyze the second term in \eqref{igjg8u8956ug896u86u8}. First, by H\"older's inequality,
		$$\int_{\R^3} |f_{s_1}f_{s_1'}f_{s_2}f_{s_2'}|\le (\int_{\R^3} |f_{s_1}f_{s_1'}|^2)^{1/2}(\int_{\R^3}|f_{s_2}f_{s_2'}|^2)^{1/2}.$$
		Fix $s_j,s_j'$ with distance $d\in[\delta/K,\delta]$ and midpoint $(c_1,c_2)$ between their centers.
		Call $\tilde{s}_j$, $\tilde{s}_j'$ the images of $s_j$, $s_{j}'$ under the map $(\xi,\eta)\mapsto(\frac{\xi-c_1}{d},\frac{\eta-c_2}{d})$. They are $1$-separated rectangles with dimensions $\sim (\delta^2/d,\delta/(Kd))$ lying inside a strip
		whose central line points in the direction  $(\xi_2^*-\xi_1^*,\eta_1^*-\eta_2^*)$. This strip has width $\delta^2/d$, and the corresponding strip on $\ZH$ lies within the $O(\delta^2/d)$-neighborhood of a parabola with curvature $\sim 1$.

		The affine transformation
		$$(\xi,\eta, \gamma)\mapsto (\frac{\xi-c_1}{d},\frac{\eta-c_2}{d},\frac{\gamma-c_1\eta-c_2\xi+c_1c_2}{d^2})$$
		maps $\ZH$ to itself, and $N_{1/R}(\ZH)$ to $N_{1/(Rd^2)}(\ZH)$. Call $g_{\tilde{s}_j}$, $g_{\tilde{s}_{j'}}$  the rescaled versions of $f_{s_j}$, $f_{s_j'}$	according to this map. Their Fourier support lies inside 1-separated subsets of the $O(\delta^2/d)$-neighborhood of a parabola with curvature $\sim 1$. This is because $1/(Rd^2)\lesssim \delta^2/d$, a consequence of our hypothesis $R\gtrsim K\delta^{-3}$. 
		
		We first use bilinear restriction (Proposition \ref{labell}) and split $\tilde{s}_j$, $\tilde{s}_j'$ into $\delta^2/d$-squares $\tilde{t}_j,\tilde{t}_j'$ to get
		$$\int_{\R^3}|g_{\tilde{s}_j}g_{\tilde{s}_j'}|^2\lesssim \sum_{\tilde{t}_j\subset\tilde{s}_j}\sum_{\tilde{t}_j'\subset\tilde{s}_j'}\int_{\R^3}|g_{\tilde{t}_j}g_{\tilde{t}_j'}|^2.$$ 
		For each such pair $(\tilde{t}_j,\tilde{t}_j')$,  we then apply the induction hypothesis. Transversality holds essentially because the absolute value of the slope of the line joining the centers of $\tilde{t}_j,\tilde{t}_j'$	equals the absolute value of the slope of $\ell(\tau_1,\tau_2)$.
        Thus,
		$$\int_{\R^3}|g_{\tilde{s}_j}g_{\tilde{s}_j'}|^2\lesssim $$$$C(\delta^2/d, Rd^2)\Big(\sum_{\tilde{\theta}_j\in \cP_{(Rd^2)^{-1/2}}(\tilde{s}_j)}\big\|g_{\tilde{\theta}_j}\big\|_{L^4(\R^3)}^2\Big)\Big(\sum_{\tilde{\theta}_j'\in \cP_{(Rd^2)^{-1/2}}(\tilde{s}_j')}\big\|g_{\tilde{\theta}_j'}\big\|_{L^4(\R^3)}^2\Big).$$
		Using monotonicity (recall that $d\ge \delta/K$) and \eqref{hguugyrtugyuy5yty}, we may write $$C(\delta^2/d, Rd^2)\le \max_{R'\le R\delta^2}C(\delta K,R')\lesssim K^{O(1)}\max_{R'\le R\delta^2}C(\delta ,R').$$		 Rescaling back (and using that $s_j,s_j'\subset \omega_j$) it follows that
		$$\int_{\R^3}|f_{s_j}f_{s_j'}|^2\lesssim$$$$
		K^{O(1)}\sup_{R'\le R\delta^2}C(\delta , R')\Big(\sum_{{\theta}_j\in \cP_{R^{-1/2}}({\omega}_j)}\big\|f_{{\theta}_j}\big\|_{L^4(\R^3)}^2\Big)^2.$$
		Thus $$\max_{s_1\not\sim s_{1}'\subset \omega_1}\max_{s_2\not\sim s_{2}'\subset \omega_2}\int_{\R^3} |f_{s_1}f_{s_1'}f_{s_2}f_{s_2'}|\lesssim $$
		$$K^{O(1)}\sup_{R'\le R\delta^2}C(\delta , R')\prod_{j=1}^2\Big(\sum_{{\theta}_j\in \cP_{R^{-1/2}}({\omega}_j)}\big\|f_{{\theta}_j}\big\|_{L^4(\R^3)}^2\Big).$$ Summing over $\omega_1, \omega_2$ leads to the second upper bound in \eqref{bfjrhfurhgrugy}. \qedhere
		
	\end{proof}
    \medskip 
	\begin{proof}[Proof of Theorem\ref{dec-thm}]
		Fix $\e>0$. Let $K=R^{\e^{2}}$. We first invoke \eqref{hguugyrtugyuy5yty}
		$$C(1,R)\lesssim R^{O(\e)}C(R^{-\e},R).$$
		We iterate \eqref{bfjrhfurhgrugy} starting with the value $\delta= R^{-\e}$. Each iteration doubles the number of terms. New terms either substantially decrease (and never increase) the value of $\delta$, or substantially decrease (and never increase) the value of $R$.
		
		Each term is iterated until it becomes of the form $C(\Delta, r)$, with $\Delta\le r^{-1/4}$. Proposition \ref{jfiriurgutgur8g} guarantees that each such term contributes $\lesssim 1$.
		
		A term $C(\Delta,r)$ needs further iteration as long as $\Delta>r^{-1/4}$. But since each $\Delta$ satisfies $\Delta\le K^{-1}$ (recall that the initial value satisfies $\Delta=R^{-\e}\le K^{-1}$, and $\Delta$ never gets larger), $\Delta>r^{-1/4}$ implies $r>K\Delta^{-3}$. This is precisely the requirement in Proposition \ref{diowue8tu954iy967u989}, which guarantees that \eqref{bfjrhfurhgrugy} is applicable to $C(\Delta,r)$.  
		
		It remains to understand the number of steps needed for such an iteration to reach a halt and  the accumulation of multiplicative constants. We describe the two extreme scenarios and leave the details for the general case to the reader. First, if \eqref{bfjrhfurhgrugy} only contained the first term, it would need to be iterated $n$ times until $R^{-\e}/K^n\sim R^{-1/4}$. This shows $n\sim \e^{-2}$, and the final multiplicative constant is $(C_2)^{n}\lesssim_\e 1$. If instead \eqref{bfjrhfurhgrugy} only contained the second term, it would need to be iterated $n$ times until $R^{-\e}=(R^{1-2n\e})^{-1/4}$. In this case $n\sim \e^{-1}$, and the corresponding loss is $K^{O(\e^{-1})}=R^{O(\e)}$. In either case, the multi-iteration produces $O_\e(1)$ many terms, and  we are led to the bound
		\[C(1,R)\lesssim_\e R^{O(\e)}. \qedhere\]
		
	\end{proof}
    \medskip

    \subsection{Application to linear decoupling}
    \label{app-linear-dec-sec}
    
    We now reprove the following recent result of Guth, Maldague, and Oh \cite{GMO}.
    They observed that the $\ell^2$ decoupling for $\ZH$ is salvaged if partitions are replaced with appropriate $\log R$-overlapping covers. 
    
    \begin{theorem}
    \label{GMOt}
    Let $\mathcal{R}_R$ be the collection of all dyadic rectangles $\omega$ in $[-1,1]^2$, with sidelength $R^{-1}\le 2^n\le 2$ and area $R^{-1}$. Then for each $f$ Fourier supported on $N_{1/R}(\ZH_{[-1,1]^2})$ we have
    $$\|f\|_{L^4(\R^3)}\lesssim_\e R^\e(\sum_{\omega\in\mathcal{R}_R}\|f_\omega\|_{L^4(\R^3)}^2)^{1/2}.$$
    \end{theorem}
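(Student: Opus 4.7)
The plan is to deduce Theorem \ref{GMOt} from Theorem \ref{dec-thm} via a Bourgain--Guth broad-narrow reduction, inducting on $R$. Fix $K = R^{\e^2}$ and partition $[-1,1]^2$ into $K^{-1}$-squares $\tau$. For each $x \in \R^3$, let $\tau_\ast(x)$ maximize $|f_\tau(x)|$ over $\tau$. I would call $x$ \emph{broad} if some $\tau$ transverse to $\tau_\ast(x)$ satisfies $|f_\tau(x)| \ge K^{-100}|f_{\tau_\ast(x)}(x)|$, and \emph{narrow} otherwise. On broad points $|f(x)|^4 \lesssim K^{O(1)}|f_{\tau_1}(x) f_{\tau_2}(x)|^2$ for some transverse pair, so summing over the $O(K^4)$ transverse pairs of $K^{-1}$-squares and applying Theorem \ref{dec-thm} (using $C(K^{-1}, R) \le C(1, R) \lesssim_\e R^\e$ by monotonicity) yields
$$\int_{\text{broad}}|f|^4 \lesssim_\e K^{O(1)} R^\e \Big(\sum_{\omega \in \cP_{R^{-1/2}}([-1,1]^2)} \|f_\omega\|_{L^4}^2\Big)^2,$$
which is of the desired form since $R^{-1/2}$-squares belong to $\mathcal{R}_R$.

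The structural observation for the narrow part is that two $K^{-1}$-squares fail to be transverse precisely when their $\xi$-ranges or their $\eta$-ranges nearly coincide. Hence at a narrow point, all $\tau$'s with $|f_\tau(x)| \ge K^{-100}|f_{\tau_\ast(x)}(x)|$ lie in the \emph{cross} $R(\tau_\ast(x)) \cup C(\tau_\ast(x))$ formed by the row and column of $\tau_\ast(x)$ (each a $K^{-1}$-thick strip parallel to one axis), giving
$$|f(x)| \lesssim |f_{R(\tau_\ast(x))}(x)| + |f_{C(\tau_\ast(x))}(x)| + K^{-98}|f_{\tau_\ast(x)}(x)|.$$
Discarding the negligible error term and using $\sup_R |f_R|^4 \le \sum_R |f_R|^4$, the narrow contribution is controlled by $\sum_R \|f_R\|_{L^4}^4 + \sum_C \|f_C\|_{L^4}^4$, where $f_R = \sum_{\tau \subset R} f_\tau$ is Fourier supported in $N_{R^{-1}}(\ZH_R)$.

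To handle a row $R$ centered at $\xi = a$, the affine map $(\xi, \eta, \zeta) \mapsto (K(\xi-a), \eta, K(\zeta - a\eta))$ preserves $\ZH$ and sends $N_{R^{-1}}(\ZH_R)$ to $N_{(R/K)^{-1}}(\ZH_{[0,1]\times[-1,1]})$. Applying the inductive hypothesis at scale $R/K$ and pulling back, the $\mathcal{R}_{R/K}$-rectangles correspond to $\mathcal{R}_R$-rectangles with $\xi$-sidelength at most $K^{-1}$, all contained in the strip $R$. Since each such $\omega$ lies in a unique row, squaring and summing gives
$$\sum_R \|f_R\|_{L^4}^4 \lesssim (R/K)^{2C\e}\Big(\sum_{\omega \in \mathcal{R}_R} \|f_\omega\|_{L^4}^2\Big)^2,$$
symmetrically for columns. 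With $K = R^{\e^2}$, the gain $K^{-2C\e} = R^{-2C\e^3}$ suffices to absorb the $K^{O(1)}R^\e$ loss from the broad case and close the induction, starting from a trivial base case at scales where $K$ is comparable to $R^{1/2}$.

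The main point of care, and the source of the overlapping cover $\mathcal{R}_R$, is the asymmetry between the hyperbolic and elliptic settings: narrow here means concentrated on a row-column cross rather than on a single small disc, and iterated narrow reductions can independently elongate rectangles in the $\xi$ and $\eta$ directions. This is exactly why $\mathcal{R}_R$ must include all $R^{-1}$-area dyadic rectangles rather than only $R^{-1/2}$-squares. Verifying that each narrow $\omega$ (with $\xi$-sidelength $\le K^{-1}$) belongs to a unique row, so that the squaring step does not generate extra overlap, is the main bookkeeping challenge.
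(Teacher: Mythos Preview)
Your approach mirrors the paper's: a broad--narrow reduction that feeds Theorem~\ref{dec-thm} into the broad part and closes the narrow part by anisotropic rescaling of rows/columns plus induction on $R$. There is, however, one real gap in the narrow estimate. You assert that at a narrow point
\[
|f(x)|\;\lesssim\;|f_{R(\tau_\ast)}(x)|+|f_{C(\tau_\ast)}(x)|+K^{-98}|f_{\tau_\ast}(x)|,
\]
but inclusion--exclusion on the cross forces a \emph{full-weight} copy of $|f_{\tau_\ast}|$: since $R\cap C\supset\tau_\ast$ one has $f_{R\cup C}=f_R+f_C-f_{\tau_\ast}$, hence $|f_{R\cup C}|\le |f_R|+|f_C|+|f_{\tau_\ast}|$, and there is no a priori upper bound on $|f_{\tau_\ast}(x)|$ in terms of $|f(x)|$ (cancellation in $f$ can make $|f|$ much smaller than $|f_{\tau_\ast}|$). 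This term therefore cannot be discarded as ``negligible.''

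The paper resolves this by splitting off a third \emph{square} case (its Case~1): if $|f_{\tau_\ast}(x)|\ge \tfrac{1}{100}|f(x)|$ then $|f(x)|\lesssim\max_\tau|f_\tau(x)|$, and isotropic parabolic rescaling $(\xi,\eta,\zeta)\mapsto(K\xi,K\eta,K^2\zeta)$ reduces to scale $R/K^2$. Only in the complementary regime $|f_{\tau_\ast}(x)|<\tfrac{1}{100}|f(x)|$ can the overlap term (together with the $K^{-98}$ tail) be absorbed into the left-hand side, after which one genuinely obtains $|f(x)|\lesssim|f_R(x)|+|f_C(x)|$. With this extra case added your scheme coincides with the paper's; the only cosmetic difference is that the paper takes $K$ to be a large constant depending on $\e$, whereas you take $K=R^{\e^2}$, and either choice closes the iteration.
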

	\begin{proof}
    Let us call $D(R)$ the best constant in the previous inequality. We need to prove $D(R)\lesssim_\e R^{\e}$.
    
    We start with a broad-narrow argument.
    Fix $K=2^m\ge 1$ for some $m$ to be chosen later, and let $\cC_K$ be the partition of $[-1,1]^2$ into $K^{-1}$-squares $\tau$. 
    For each $x\in\R^3$, let $\tau_1$ be the square maximizing $|f_{\tau_1}(x)|$. Triangle's inequality implies that
    $$|f_{\tau_1}(x)|\ge K^{-2}|f(x)|.$$
    Let $\mathcal{S}_{\tau_1}$ consist of those $\tau\in\mathcal{C}_K$ such that both the distance between the $\xi$-coordinates and the $\eta$-coordinates of the centers $c(\tau),c(\tau_1)$  are at least $2/K$. 
    Let us call  $1/K$-transverse any such pair $(\tau,\tau_1)$.
    We let
    $$\mathcal{S}_{big}=\{\tau\in\mathcal{S}_{\tau_1}:\;|f_\tau(x)|\ge \frac12K^{-2}|f(x)|\}.$$
    There are three possibilities.
    \medskip

    {\bf Case 1.} If $|f_{\tau_1}(x)|\ge \frac{1}{100}|f(x)|$, then we write
    \begin{equation}
    \label{kjhhrfhrghurhguhrgiuh1}
    |f(x)|\lesssim \max_{\tau}|f_\tau(x)|\le (\sum_{\tau}|f_{\tau}|^4)^{1/4}.
    \end{equation}
    \medskip

    {\bf Case 2.} If $\mathcal{S}_{big}$ is nonempty, we find that that\begin{equation}
    \label{kjhhrfhrghurhguhrgiuh2}
|f(x)|\lesssim K^4\max_{(\tau_1,\tau_2):\;1/K-\text{transverse}}(f_{\tau_1}(x)f_{\tau_2}(x))^{1/2}.
    \end{equation}
    \medskip

    {\bf Case 3.} Assume $\mathcal{S}_{big}$ is empty and $|f_{\tau_1}(x)|\le \frac1{100}|f(x)|$. Since 
    $$\sum_{\tau\in\mathcal{S}_{\tau_1}}|f_\tau(x)|<\frac12|f(x)|,$$
    it follows that 
    \begin{equation}
    \label{hfhurhfurhfuregurgiuy}
    |\sum_{\tau\in \mathcal{S}\setminus\mathcal{S}_{\tau_1}}f_\tau(x)|\ge \frac12|f(x)|.
    \end{equation}
    Note that $\mathcal{S}\setminus\mathcal{S}_{\tau_1}$ is the union of three (vertical) $(1/K,2)$-rectangles $\omega$ and three (horizontal) $(2,1/K)$-rectangles $\omega$. If we exclude the nine neighbors of $\tau_1$ (itself included), the six rectangles are pairwise disjoint. Since the nine neighbors contribute at most $\frac{9}{100}|f(x)|$, triangle's inequality and \eqref{hfhurhfurhfuregurgiuy} shows that one of the six $\omega$ satisfies
    $|f(x)|\le 100|f_\omega(x)|.$ We summarize our findings as follows
    \begin{equation}
    \label{kjhhrfhrghurhguhrgiuh3}
    |f(x)|\lesssim \max_{\omega:\;(1/K,2)-\text{rectangle or }\atop{(2,1/K)-\text{rectangle}}}|f_\omega(x)|\le (\sum_{\omega}|f_\omega|^4)^{1/4}.
    \end{equation}
    We mention that all implicit constants in the inequalities from the three cases are independent of $K$. Let us call $C$ the maximum of these constants. 
    \smallskip

    If \eqref{kjhhrfhrghurhguhrgiuh1} holds for each $x$, rescaling by $(2K,2K,4K^2)$ leads to the inequality
    $$D(R)\le CD(R/4K^2).$$
	If \eqref{kjhhrfhrghurhguhrgiuh2} holds for each $x$, then Theorem \ref{dec-thm} implies that
    $$D(R)\lesssim_\e K^{O(1)}R^{\e}.$$
    If \eqref{kjhhrfhrghurhguhrgiuh3} holds for each $x$, we rescale each $\ZH_{\omega}$ with horizontal $\omega$ by $(1,2K,2K)$ and each $\ZH_{\omega}$ for vertical $\omega$ by $(2K,1,2K)$. Note that these non-isotropic dilations leave $\ZH$ invariant. In this case we get
    $$D(R)\le CD(R/2K).$$
   It is precisely this case that explains the need for the collection $\mathcal{P}_R$ of all rectangles in the definition of $D(R)$.

Overall, we have the inequality
$$D(R)\le C(D(R/2K)+D(R/4K^2))+C_\e K^{C}R^\e.$$
We may now pick $K=(100C)^{1/\e}$. Iterating the above inequality proves
$D(R)\lesssim_\e R^\e$. \qedhere

\end{proof}

    \bigskip 
	
\section{Bilinear refined decoupling}

    Throughout this section, we fix $\e$ and let $K_1=R^{\e^{6}}$, $K_2=R^{\e^{4}}$, $K_3=R^{\e^{2}}$. Note that $1\ll K_1\ll K_2\ll K_3\ll R^\e$.
	$K_1$ will be used to enforce the broad-narrow dichotomy, $K_2$ will be the eccentricity of the rectangles, $K_3$ we be a threshold factor that enforces the stopping time.
	
	We start by recalling a few tools from the previous section, adapted to the new context. Definition \ref{transversality} introduced transverse squares that are separated by $\sim 1$. We will now encounter pairs of squares that are separated by $\ll 1$. 
    \begin{definition}
    We will refer to a pair of squares in $[-1,1]^2$ as being in {\bf general position} if the line joining their centers has slope of absolute value  $\sim 1$. A thin rectangle is in general position if its long central line satisfies the same property.  
    \end{definition}
    Throughout this section, $Q$ will refer to an arbitrary $\sqrt{R}$-ball in $\R^3$. We will consider various pairs of  disjoint squares with diameter $\sim r$ and separation $d$.
    Unless stated otherwise, it will be implicitly assumed that $d\ge r$. This separation condition records the fact that the squares are not neighbors, and is preserved for pairs consisting of their descendants, as  $d$ does not decrease.
	\begin{lemma}
		\label{main-toolff}
        Assume $\widehat{f}$ is supported on 
        $N_{1/\sqrt{R}}(\ZH)$.
		Consider a pair of $r$-squares $(\alpha_1,\alpha_2)$ in general position, with centers at distance $d$ satisfying $dR^{1/2}r \gtrsim K_2$ (in addition to the implicit assumption that $d\ge r$). We have 		$$\int_Q|f_{\alpha_1}f_{\alpha_2}|^2\sim \int_Q\sum_{\omega_1\subset \alpha_1}|f_{\omega_1}|^2\sum_{\omega_2\subset \alpha_2}|f_{\omega_2}|^2,$$
		where $\omega_j$ are $(r/K_2,r)$-rectangles in general position.
	\end{lemma}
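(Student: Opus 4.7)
The plan is to reduce Lemma \ref{main-toolff} to an instance of Lemma \ref{main-tool} via the affine symmetry of $\ZH$, and to promote the resulting $\lesssim$ bound to $\sim$ through an approximate Plancherel identity on $Q$ whose validity is controlled precisely by the hypothesis $dR^{1/2}rK_2\gtrsim 1$.

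Concretely, let $(c^\xi,c^\eta)$ denote the midpoint of the centers of $\alpha_1,\alpha_2$, and apply the affine map
\[
    T(\xi,\eta,\gamma)=\Big(\tfrac{\xi-c^\xi}{d},\,\tfrac{\eta-c^\eta}{d},\,\tfrac{\gamma-c^\xi\eta-c^\eta\xi+c^\xi c^\eta}{d^2}\Big),
\]
which preserves $\ZH$. Under $T$, the $\alpha_j$'s become $(r/d)$-squares $\tilde\alpha_j$ whose centers are at unit distance (hence transverse in the sense of Definition \ref{transversality}), the $R^{-1}$-thickness of the Fourier supports becomes $(Rd^2)^{-1}$, and the physical $\sqrt R$-ball $Q$ turns, under the dual map on the spatial side, into a box of dimensions $\sim d\sqrt R\times d\sqrt R\times d^2\sqrt R$. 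In the rescaled frame, the tangent planes $\tilde\pi_1,\tilde\pi_2$ at the centers of $\tilde\alpha_1,\tilde\alpha_2$ meet at angle $\sim 1$ along a line whose horizontal projection has slope $\sim 1$, and this projection is the common long direction of the $\omega_j$'s. Applying Lemma \ref{main-tool} with thickness $\Delta=r/(dK_2)$, the $\Delta\times\Delta\times\infty$ boxes aligned with $\tilde\pi_1\cap\tilde\pi_2$ correspond, under $T^{-1}$, precisely to the partition of $\alpha_j$ into $(r/K_2,r)$-rectangles $\omega_j$ in general position, yielding the upper bound
\[
    \int_Q|f_{\alpha_1}f_{\alpha_2}|^2\lesssim \sum_{\omega_1,\omega_2}\int_Q|f_{\omega_1}f_{\omega_2}|^2=\int_Q\Big(\sum_{\omega_1}|f_{\omega_1}|^2\Big)\Big(\sum_{\omega_2}|f_{\omega_2}|^2\Big).
\]

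For the matching lower bound I would open the products,
\[
    |f_{\alpha_1}f_{\alpha_2}|^2=\sum_{\omega_1,\omega_1',\omega_2,\omega_2'}f_{\omega_1}\overline{f_{\omega_1'}}\,f_{\omega_2}\overline{f_{\omega_2'}},
\]
separate the diagonal $(\omega_1,\omega_2)=(\omega_1',\omega_2')$ which contributes exactly $\int_Q(\sum|f_{\omega_1}|^2)(\sum|f_{\omega_2}|^2)$, and argue that the off-diagonal inner products over $Q$ are negligible. The Fourier support of $f_{\omega_1}f_{\omega_2}$ lies near the sumset $\ZH_{\omega_1}+\ZH_{\omega_2}$, and the sumset map is non-degenerate by the same transversality that drives Proposition \ref{labell}: in the rescaled frame the Jacobian is $\sim 1$, so distinct sumsets are separated by $\gtrsim r/(dK_2)$ horizontally and, after transporting back by $T^{-1}$, by $\gtrsim rd/K_2$ in the vertical direction. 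The vertical separation is the tightest constraint against the Fourier uncertainty $R^{-1/2}$ of the ball $Q$, and the hypothesis $dR^{1/2}rK_2\gtrsim 1$ is precisely the statement $rd/K_2\gtrsim R^{-1/2}$. Plancherel on $Q$ then makes the off-diagonal contributions rapidly decaying, yielding the reverse inequality.

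The main obstacle is the bookkeeping that turns the hypothesis $dR^{1/2}rK_2\gtrsim 1$ into a single sufficient condition for both steps: the tangent-plane approximation needed in the $\lesssim$ direction (which requires $\Delta=r/(dK_2)$ to swallow both the curvature deviation $(r/d)^2$ of $\ZH_{\tilde\alpha_j}$ from $\tilde\pi_j$ and the rescaled Fourier thickness $(Rd^2)^{-1}$) and the sumset-separation inequality needed for the reverse Plancherel step. Once this bookkeeping is done, the lemma reads as the natural bilinear and localized-to-$Q$ reincarnation of Lemma \ref{main-tool}, and will serve as the workhorse orthogonality tool in what follows.
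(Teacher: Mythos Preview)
Your overall strategy matches the paper's one-line proof ``Use Lemma \ref{main-tool}'': rescale by the affine symmetry of $\ZH$ so that the tangent planes meet at angle $\sim 1$, invoke the bi-orthogonality underlying Lemma \ref{main-tool}, and promote the global $\lesssim$ to a local $\sim$ on $Q$ via sumset separation. That is the intended argument, and the paper says nothing more.

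There is, however, a concrete slip in your bookkeeping. You write that ``the hypothesis $dR^{1/2}rK_2\gtrsim 1$ is precisely the statement $rd/K_2\gtrsim R^{-1/2}$.'' This is false: the hypothesis rearranges to $rdK_2\gtrsim R^{-1/2}$, not $rd/K_2\gtrsim R^{-1/2}$; the two differ by a factor of $K_2^2$. So the match you assert between the stated hypothesis and your computed vertical separation $rd/K_2$ does not hold. Relatedly, you note that $\Delta=r/(dK_2)$ must ``swallow the curvature deviation $(r/d)^2$'', which forces $rK_2\lesssim d$; but this too is \emph{not} implied by $dR^{1/2}rK_2\gtrsim 1$, and in fact fails in the chief application inside Proposition \ref{pmainnnnnnnnnn} (where the $\alpha_i$ are $r/K_2$-squares at distance as small as $\sim r/K_1$, giving $r_{\mathrm{new}}K_2=r\gg r/K_1\sim d$). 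Thus your ``single sufficient condition for both steps'' does not actually follow from the hypothesis as written.

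In short: the plan is correct and is exactly what the paper has in mind, but the numerology you present to justify it is off. You should redo the separation/uncertainty comparison carefully (tracking which direction in the plane orthogonal to $\tilde\ell$ the separation lives in, and how the anisotropic image of $Q$ under the dual of $T$ enters), and revisit whether $\Delta=r/(dK_2)$ is really the parameter with which Lemma \ref{main-tool} is being invoked, since the curvature constraint you yourself flag is not covered by the hypothesis.
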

    \begin{proof} The diameter of $Q$ is consistent with the thickness $1/\sqrt{R}$ of the frequency support, so $Q$ can be discarded using the Uncertainty Principle.  We now use Lemma \ref{main-tool} as in Step 4 of the proof of Proposition \ref{diowue8tu954iy967u989}. The affine transformation
		$$L(\xi,\eta, \gamma)= (\frac{\xi-c_1}{d},\frac{\eta-c_2}{d},\frac{\gamma-c_1\eta-c_2\xi+c_1c_2}{d^2})$$
		maps $N_{1/\sqrt{R}}(\ZH)$ to $N_{1/(\sqrt{R}d^2)}(\ZH)$. Under this map, the frequency support of $f_{\alpha_i}$ is mapped to $N_{R^{-1/2}d^{-2}}(\ZH_{\tau_i})$, for some $\tau_1,\tau_2\subset [-2,2]^2$. This lies inside the $O(R^{-1/2}d^{-2})$-neighborhood of a plane $\pi_i$. Our hypothesis implies that $R^{-1/2}d^{-2}\lesssim \Delta:=\frac{r}{K_2d}$. We apply Lemma \ref{main-tool} with this $\Delta$, an then rescale back using $L^{-1}$. \qedhere
    
\end{proof}
	We note that this result proves an equivalence (double inequality) between the  uncoupled term on the left, and the decoupled term on the right. Thus, this reverse square function estimate is reversible; terms on the right hand side  may be conveniently recoupled. See \eqref{NB}. This feature is not crucial to the argument, but it leads to simplifications that have esthetic value. 
	
	The next result is the broad-narrow decomposition for each term in Lemma \ref{main-toolff}.
	\begin{lemma}
		Consider the family of $K_1$ many  $(r/K_2,r/K_1)$-rectangles $s_i$ partitioning  $\omega_i$. Then 
		\begin{align*}
			\int_Q|f_{\omega_1}|^2|f_{\omega_2}|^2\lesssim &\int_Q\sum_{s_1\subset \omega_1}|f_{s_1}|^2\sum_{s_2\subset \omega_2}|f_{s_2}|^2\\&+K_1^{O(1)}\max_{s_1\not\sim s_1'\subset \omega_1}\max_{s_2\not\sim s_2'\subset \omega_2}\int_Q|f_{s_1}f_{s_1'}f_{s_2}f_{s_2'}|.
		\end{align*}
	\end{lemma}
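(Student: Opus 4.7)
The plan is to transplant the broad-narrow argument from Step 2 of the proof of Proposition \ref{diowue8tu954iy967u989}, which, as noted in the excerpt, does not rely on any $L^4$-specific geometry and is therefore directly portable. We simply replace the ambient ball of radius $K/\delta$ used there with smaller balls of radius $\sim K_1/r$, and integrate over $Q$ rather than over $\R^3$.

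The starting point is the pointwise triangle inequality: for each $x$ and $j=1,2$,
$$|f_{\omega_j}(x)|\le 10 \bigl(\sum_{s_j\subset \omega_j}|f_{s_j}(x)|^2\bigr)^{1/2}+K_1^2\max_{s_j\not\sim s_j'\subset\omega_j}|f_{s_j}(x)f_{s_j'}(x)|^{1/2},$$
which holds because $\omega_j$ is partitioned into $K_1$ rectangles $s_j$ and the cross terms in $|\sum f_{s_j}|^2$ between adjacent indices are absorbed into the first summand. Denote the first summand by $S_{\omega_j}f(x)$. I would then cover $Q$ by a finitely overlapping family of balls $B$ of radius $\sim K_1/r$: since each $s_j$ has dimensions $(r/K_2,r/K_1)$ and thus fits inside a disk of radius $\sim r/K_1$, the uncertainty principle shows that $|f_{s_j}|$, and hence $S_{\omega_j}f$, is essentially constant on such a $B$.

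For each $B$ I would split into a narrow set $B_n$ (points where $|f_{\omega_j}(x)|\lesssim S_{\omega_j}f(x)$ for at least one $j$) and a broad set $B_b$. On $B_n$, treating $S_{\omega_j}f$ as a constant $S_{\omega_j}f(B)$ and using $L^2$-orthogonality of $f_{\omega_{j'}}=\sum_{s_{j'}}f_{s_{j'}}$ on $B$ yields
$$\int_{B_n}|f_{\omega_1}f_{\omega_2}|^2\lesssim \int_B (S_{\omega_1}f)^2(S_{\omega_2}f)^2=\int_B\sum_{s_1\subset\omega_1}|f_{s_1}|^2\sum_{s_2\subset\omega_2}|f_{s_2}|^2.$$
On $B_b$, by definition,
$$\int_{B_b}|f_{\omega_1}f_{\omega_2}|^2\lesssim K_1^{O(1)}\max_{s_1\not\sim s_1'\subset \omega_1}\max_{s_2\not\sim s_2'\subset \omega_2}\int_B|f_{s_1}f_{s_1'}f_{s_2}f_{s_2'}|.$$
Summing these two bounds over the cover of $Q$ and swallowing the finite overlap into the implicit constant gives the stated inequality.

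The only mild concern is that the uncertainty heuristic be genuinely applicable, i.e.\ that $K_1/r\lesssim \sqrt{R}$ so that the small balls $B$ actually fit inside (or are comparable to) $Q$; this will be built into the scale regime in which the lemma is applied, so no surprise arises. Everything else is a routine reproduction of the earlier argument, and the $L^2$-orthogonality step is the only place where the structure of the partition $\{s_j\}$ of $\omega_j$ genuinely enters. Hence I expect no substantive obstacle beyond careful bookkeeping of the cover of $Q$.
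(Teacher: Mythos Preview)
Your proposal is correct and is essentially the same argument the paper intends: the lemma is stated without proof precisely because it is a verbatim replay of Step~2 in the proof of Proposition~\ref{diowue8tu954iy967u989}, with $K$ replaced by $K_1$, $\delta/K$ replaced by $r/K_1$, and the cover of $\R^3$ by $K/\delta$-balls replaced by a cover of $Q$ by $K_1/r$-balls. Your remark that $K_1/r\lesssim \sqrt{R}$ is needed so that the small balls fit inside $Q$ is apt and is guaranteed in every regime where the lemma is invoked (e.g.\ $r\gtrsim K_3^{-1}$ in Proposition~\ref{pmainnnnnnnnnn}).
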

	We note that 
	$$\max_{s_1\not\sim s_1'\subset \omega_1}\max_{s_2\not\sim s_2'\subset \omega_2}\int_Q|f_{s_1}f_{s_1'}f_{s_2}f_{s_2'}|\le \int_Q|g_{\omega_1}g_{\omega_2}|^2,$$
	where
    \begin{equation}
    \label{function-g}
        g_{\omega_i}=(\sum_{s_i\not\sim s_i'\subset \omega_i}|f_{s_i}f_{s_i'}|)^{1/2}.
    \end{equation}
The least favorable scenario for pairs $s_i\not\sim s_i'$ is when $s_i,s_i'$ are almost adjacent (their centers are separated by only $2r/K_1$). To simplify notation (when it comes to rescaling), we will assume that the summation in the definition of $g_{\omega_i}$ is restricted to such pairs. 
    
	We next combine  these two lemmas with recoupling. We mention that recoupling is only used to keep the argument more elegant. It is not an essential tool. 
	\begin{lemma}
	Consider a pair $(\alpha_1,\alpha_2)$ of $r$-squares  in general position, with centers at distance $d$ satisfying $dR^{1/2}r\gtrsim  K_2$. Then	$$\int_{Q}|f_{\alpha_1}f_{\alpha_2}|^2\lesssim$$	
		\begin{equation}
			\label{NB}
			\max\left\{\int_{Q}\sum_{\beta_1\subset \alpha_1}|f_{\beta_1}|^2\sum_{\beta_2\subset \alpha_2}|f_{\beta_2}|^2 ,\;K_1^{O(1)}\sum_{\omega_1\subset \alpha_1}\sum_{\omega_2\subset \alpha_2}\int_{Q}|g_{\omega_1}g_{\omega_2}|^2\right\}.\end{equation}
		The sum in the first term is over  $r/K_1$-squares $\beta_i$ partitioning $\alpha_i$.
	\end{lemma}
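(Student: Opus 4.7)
The plan is to chain the two preceding lemmas and then use Lemma \ref{main-toolff} in reverse to recouple the narrow contribution back to the squares $\beta_i$. First, I would apply Lemma \ref{main-toolff} at scale $r$ to the pair $(\alpha_1,\alpha_2)$, which is legitimate under the standing distance hypothesis $dR^{1/2}rK_2\gtrsim 1$. This produces
$$\int_Q |f_{\alpha_1}f_{\alpha_2}|^2 \sim \sum_{\omega_1\subset\alpha_1}\sum_{\omega_2\subset\alpha_2}\int_Q|f_{\omega_1}f_{\omega_2}|^2.$$
Then I would apply the broad-narrow lemma to each pair $(\omega_1,\omega_2)$ and sum over all such pairs. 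The broad contributions assemble directly into $K_1^{O(1)}\sum_{\omega_1,\omega_2}\int_Q|g_{\omega_1}g_{\omega_2}|^2$, which is precisely the second entry of the target maximum. The narrow contributions consolidate, via the telescoping identity $\sum_{\omega_i}\sum_{s_i\subset\omega_i}=\sum_{s_i\subset\alpha_i}$, into the single expression $\int_Q\big(\sum_{s_1\subset\alpha_1}|f_{s_1}|^2\big)\big(\sum_{s_2\subset\alpha_2}|f_{s_2}|^2\big)$.

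Second, for the recoupling, I would partition each inner sum by the $r/K_1$-square $\beta_i\ni s_i$ and, for each resulting pair $(\beta_1,\beta_2)$, invoke Lemma \ref{main-toolff} in reverse at scale $r/K_1$ (with eccentricity parameter $K_2/K_1$) to obtain the equivalence
$$\int_Q\Big(\sum_{s_1\subset\beta_1}|f_{s_1}|^2\Big)\Big(\sum_{s_2\subset\beta_2}|f_{s_2}|^2\Big)\sim\int_Q|f_{\beta_1}f_{\beta_2}|^2.$$
Summing over $(\beta_1,\beta_2)$ produces exactly the first entry of the target maximum. The lemma then follows by bounding the sum of the narrow and broad contributions by twice their maximum.

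The main technical obstacle is verifying the hypotheses of Lemma \ref{main-toolff} in the reverse step. General position for $(\beta_1,\beta_2)$ is inherited from that of $(\alpha_1,\alpha_2)$ up to an angular perturbation of size $O(r/d)$, which is comfortably harmless given the standing distance hypothesis. The distance condition at the finer scale reads $d' R^{1/2}(r/K_1)(K_2/K_1)\gtrsim 1$ for the center-to-center distance $d'$ between $\beta_1$ and $\beta_2$; for the well-separated pairs with $d'\sim d$ this follows immediately from the original hypothesis, while the unusually close pairs can be controlled separately via an $L^2$-orthogonality argument on $Q$ (whose radius $\sqrt{R}$ dominates the spatial duals in the relevant regime), or absorbed into the broad budget through a cruder triangle-inequality bound. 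Provided one pays attention to the orientation of the emerging rectangles, which is determined by the $(\beta_1,\beta_2)$-line and differs from the $(\alpha_1,\alpha_2)$-line by $O(r/d)$ (within the tolerance allowed by the rectangles' eccentricity), the reverse application of Lemma \ref{main-toolff} produces the same rectangles $s_i$ that appear in the narrow contribution, closing the loop.
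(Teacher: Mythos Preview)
Your approach is correct and matches the paper's one-line argument (``Use recoupling to reassemble rectangles $s_i$ into squares $\beta_i$''), but you are overcomplicating the recoupling step and thereby creating artificial obstacles.

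The recoupling does not require a fresh instance of Lemma~\ref{main-toolff} at scale $r/K_1$ with a new eccentricity parameter $K_2/K_1$ and new hypotheses to check. The point is that since $\beta_i\subset\alpha_i$, the Fourier support of $f_{\beta_i}$ (at the $Q$-localized scale) sits in the neighborhood of the \emph{same} tangent plane $\pi_i$ used for $\alpha_i$, with the \emph{same} thickness parameter. So you may invoke Lemma~\ref{main-tool} (the engine behind Lemma~\ref{main-toolff}) with exactly the same pair of planes $\pi_1,\pi_2$ and the same intersection line $\ell$. The $\Delta$-boxes of Lemma~\ref{main-tool}, intersected with $\beta_i\times\R$, project precisely to the $(r/K_2,r/K_1)$-rectangles $s_i$ you already have, oriented along the same direction as the original $\omega_i$. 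The equivalence
\[
\int_Q|f_{\beta_1}f_{\beta_2}|^2\ \sim\ \int_Q\sum_{s_1\subset\beta_1}|f_{s_1}|^2\sum_{s_2\subset\beta_2}|f_{s_2}|^2
\]
then comes for free under the \emph{original} hypothesis $dR^{1/2}rK_2\gtrsim1$, with no separate distance or orientation check for each pair $(\beta_1,\beta_2)$. Your worry about ``unusually close pairs'' and the proposed $L^2$-orthogonality patch are therefore unnecessary; the single geometric input (the pair of tangent planes at $\alpha_1,\alpha_2$) handles all $(\beta_1,\beta_2)$ at once.
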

	\begin{proof}
	Use recoupling to reassemble rectangles $s_i$ into squares $\beta_i$. \qedhere
	
	\end{proof}
	
	\begin{definition}Consider a pair of $r$-squares $(\alpha_1,\alpha_2)$ in general position, with centers at distance $d$ satisfying 
	\begin{equation}
	\label{firuegrtug8tu8uh8u8u}
	dR^{1/2}r \gtrsim K_2.
	\end{equation}
		We call the pair $(\alpha_1,\alpha_2)$ narrow/broad relative to $Q$ if the first/second term in \eqref{NB} dominates.
	\end{definition}
	The phrase ``relative to $Q$" will be omitted, when $Q$ is clear from the context. We emphasize that we require \eqref{firuegrtug8tu8uh8u8u} to hold in order for a pair to be labeled either narrow or broad.

	\begin{lemma}
		\label{procC}	
		Assume $s_i,s_i'$ are almost adjacent rectangles inside some $(r/K_2,r)$-rectangle $\omega_i$ in general position. Assume $r^2\sqrt{R}\gtrsim K_1K_2$. Then
		$$\int_{Q}|f_{s_i}f_{s_i'}|^2\lesssim \int_{Q}\sum_{t_i\subset s_i}|f_{t_i}|^2\sum_{t_i'\subset s_i'}|f_{t_i'}|^2,$$
		where $t_i \;(t_i')$ are $r/K_2$-squares  partitioning $s_i\; (s_i')$.		
	\end{lemma}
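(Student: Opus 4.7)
Since Lemma \ref{procC} has the same flavor as Lemma \ref{main-toolff} (both are bilinear decouplings of rectangles into finer pieces), my plan is to apply Lemma \ref{main-tool} after a $\ZH$-preserving affine rescaling. Specifically, I would use the rescaling from the proof of Proposition \ref{diowue8tu954iy967u989} at scale $d = r/K_1$ (the distance between the centers of $s_i$ and $s_i'$), centered at their midpoint. This sends $s_i, s_i'$ to $1$-separated $(K_1/K_2, 1)$-rectangles $\tilde s_i, \tilde s_i'$; the Fourier scale $1/R$ to $K_1^2/(Rr^2)$; and the target $r/K_2$-squares $t_j$ to $(K_1/K_2)$-squares $\tilde t_j$ partitioning $\tilde s_j$. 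Under the hypothesis $r^2\sqrt R \gtrsim K_1K_2$, the rescaled Fourier thickness satisfies $K_1^2/(Rr^2) \lesssim (K_1/K_2)^2$.

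Because $\tilde s_j$ has unit extent in the long direction, the deviation of $\ZH_{\tilde s_j}$ from its center-tangent plane is $\sim 1$, which prevents a direct application of Lemma \ref{main-tool} to the pair $(\tilde s_i, \tilde s_i')$ at the desired resolution. Instead, I would refine: for each pair of subsquares $\tilde t_i \subset \tilde s_i$, $\tilde t_i' \subset \tilde s_i'$, apply Lemma \ref{main-tool} to $(g_{\tilde t_i}, g_{\tilde t_i'})$ using the tangent planes at the centers of $\tilde t_i, \tilde t_i'$. These meet at angle $\sim 1$ (inherited from the $1$-separation of $\tilde s_i, \tilde s_i'$), while both the deviation of $\ZH_{\tilde t_j}$ from its center-tangent plane and the rescaled Fourier scale are $\lesssim (K_1/K_2)^2$, so Lemma \ref{main-tool} applies with thickness $\tilde\Delta = (K_1/K_2)^2$.

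Finally, aggregate the pairwise bounds into a single bound for $(\tilde s_i, \tilde s_i')$ and undo the rescaling. The aggregation relies on bi-orthogonality: the pieces $g_{\tilde t}$ for distinct $\tilde t \subset \tilde s_j$ have disjoint Fourier supports, so on balls in $Q$ larger than the reciprocal of the Fourier spread, cross-terms in $|\sum_{\tilde t \subset \tilde s_j} g_{\tilde t}|^2$ vanish by local $L^2$ orthogonality, leaving the diagonal sum $\sum |g_{\tilde t}|^2$.

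The main obstacle I expect is this aggregation step: carefully translating the pairwise Lemma \ref{main-tool} bounds into a single bound for $(\tilde s_i, \tilde s_i')$ while producing the required diagonal form $\sum |f_{t_i}|^2 \sum |f_{t_i'}|^2$ on the right-hand side. This requires combining Fourier-support disjointness of the $\tilde t_j$ pieces with local constancy of their associated wave packets at the relevant scale.
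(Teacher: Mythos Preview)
Your rescaling by the factor $K_1/r$ (with $d=r/K_1$) matches the paper's, but from there your plan misses the decisive geometric observation and then proposes an aggregation step that does not work.

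The point you overlook is that $\omega_i$ is a thin rectangle \emph{in general position}: its long direction has slope of absolute value $\sim 1$, so $\ZH$ over that line is a parabola with curvature $\sim 1$. After rescaling, $\ZH_{T(\omega_i)}$ therefore lies in the $O(K_1/K_2)$-neighborhood of a genuine \emph{curve} (a parabola), not merely near tangent planes. The rescaled pieces $T(s_i),T(s_i')$ are $1$-separated arcs on this parabola, so their normals are quantitatively transverse and Proposition~\ref{labell} applies directly (via Fubini orthogonal to the plane of the parabola, on balls of radius $K_2/K_1$; the hypothesis $r^2\sqrt{R}\gtrsim K_1K_2$ is exactly what guarantees the image of $Q$ contains such balls). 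This decouples $T(s_i),T(s_i')$ into $K_1/K_2$-squares in one stroke; these rescale back to the $r/K_2$-squares $t_i,t_i'$. No aggregation is needed.

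By contrast, your pairwise application of Lemma~\ref{main-tool} to $(g_{\tilde t_i},g_{\tilde t_i'})$ only \emph{further} decomposes each pair; it does nothing toward passing from $\int|g_{\tilde s_i}g_{\tilde s_i'}|^2$ to $\sum_{\tilde t_i,\tilde t_i'}\int|g_{\tilde t_i}g_{\tilde t_i'}|^2$, which is exactly the bilinear restriction step you are trying to prove. And your proposed aggregation via ``local $L^2$ orthogonality'' fails: $L^2$ orthogonality on a ball $B$ gives $\int_B|\sum_{\tilde t}g_{\tilde t}|^2\approx\sum_{\tilde t}\int_B|g_{\tilde t}|^2$, but it does \emph{not} give a pointwise inequality $|\sum_{\tilde t}g_{\tilde t}|^2\lesssim\sum_{\tilde t}|g_{\tilde t}|^2$, and only a pointwise bound could be substituted under the product $|g_{\tilde s_i}|^2|g_{\tilde s_i'}|^2$. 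So the ``main obstacle'' you flagged is in fact the entire lemma, and your sketch does not resolve it. (A correct bi-orthogonality argument here would have to exploit the curvature of the parabola---i.e., it would amount to reproving Proposition~\ref{labell} in this setting.)
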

	\begin{proof}
	Use rescaling (via a map called  $T$) by the factor $K_1/r$, centered at the midpoint of $(s_i,s_i')$. Then $\ZH_{T(\omega_i)}$ lies within the $O(K_1/K_2)$-neighborhood of a parabola (with curvature $\sim 1$). Also, $T(s_i)$, $T(s_i')$ are 1-separated. The ball $Q$ is mapped to a set that is efficiently covered by $\sqrt{R}r^2/K_1^2$- balls. We note that $\sqrt{R}r^2/K_1^2\gtrsim K_2/K_1$ and we apply bilinear restriction (Proposition \ref{labell}) on balls of radius $K_2/K_1$. Then we rescale back.
    \qedhere
	  
	\end{proof}
	\begin{proposition}[Rescaling $C(R)$]
		\label{jijgutjhguthughru}
		Let $r\sqrt{R}\ge K_1$.
		Assume $\omega$ is in general position, with dimensions $(r/K_2,r)$.	
		Assume $s,s'$ are almost adjacent $(r/K_2,r/K_1)$-rectangles inside $\omega$. 
		Suppose $X$ is a collection of $\sqrt{R}$-squares each of which intersects at most $M$ many $R$-tubes from $\ZT(s)$ and at most $M'$ many $R$-tubes from $\ZT(s')$. Then	
		$$\int_X|f_{s}f_{s'}|^2\lesssim  C(Rr^2/K_1^2) (MM')^{1/2}\Big(\sum_{T\in\ZT(s)}\big\|f_{T}\big\|_4^4\Big)^{1/2}\Big(\sum_{T\in\ZT(s')}\big\|f_{T}\big\|_4^4\Big)^{1/2}.$$	
	\end{proposition}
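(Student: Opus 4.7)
The plan is to apply an affine transformation preserving $\ZH$ that turns $s,s'$ into a pair of transverse $O(1)$-squares, and then invoke $C(R')$ at scale $R':=Rr^2/K_1^2$. Let $(c_1,c_2)$ denote the midpoint of the centers of $s,s'$ and let $(d_1,d_2)$ be their half-displacement; since $s,s'$ are almost adjacent $(r/K_2,r/K_1)$-rectangles inside the general-position rectangle $\omega$, we have $|d_1|\sim|d_2|\sim r/K_1=:d$. Consider the frequency-side map
$$\Phi:(\xi,\eta,\gamma)\mapsto\Bigl(\tfrac{\xi-c_1}{d_1},\tfrac{\eta-c_2}{d_2},\tfrac{\gamma-c_1\eta-c_2\xi+c_1c_2}{d_1d_2}\Bigr),$$
which is an affine self-bijection of $\ZH$, as in Step 4 of Proposition \ref{diowue8tu954iy967u989}. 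Under $\Phi$, the images $\tilde s,\tilde s'$ are $1$-separated in both coordinates and fit inside a transverse pair of $O(1)$-squares. The $R^{-1}$-neighborhood of $\ZH$ becomes the $R'^{-1}$-neighborhood, and Fourier caps $\theta$ of size $R^{-1/2}\times R^{-1/2}\times R^{-1}$ rescale to caps of the corresponding $R'$-size; hence $R$-wave packets $f_T$ are mapped to $R'$-wave packets $g_{\tilde T}$, and $R$-tubes $T$ to $R'$-tubes $\tilde T$. The hypothesis $r\sqrt R\ge K_1$ guarantees $R'\ge 1$.

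Under the dual physical map $\Phi^{-T}$, each $\sqrt R$-ball $Q\subset X$ becomes an ellipsoid of axes $\sqrt{R'}\times\sqrt{R'}\times d\sqrt{R'}$, whose thin axis is the $\tilde x_3$-direction -- this is the image of the singular direction $(c_2,c_1,-1)$ of $\Phi^{-T}$ with smallest singular value $\sim d^2$. Let $X'$ be the union of the $\sqrt{R'}$-balls from a standard $\sqrt{R'}$-tiling of $\R^3$ that meet $\Phi^{-T}(X)$; then $X'$ is a disjoint union of $\sqrt{R'}$-balls containing $\Phi^{-T}(X)$. A direct computation shows that the image tube directions for $\ZT(\tilde s),\ZT(\tilde s')$ are approximately $(1,1,1)/\sqrt 3$ and $(1,1,-1)/\sqrt 3$, so they are mutually transverse and each makes an $O(1)$ angle with the $\tilde x_3$-axis. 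Consequently, although up to $\sim 1/d$ image ellipsoids may stack inside one $\sqrt{R'}$-ball along $\tilde x_3$, any image tube crossing one ellipsoid of the stack essentially traverses the whole stack, so each ball in $X'$ is intersected by at most $O(M)$ image tubes from $\ZT(\tilde s)$ and $O(M')$ from $\ZT(\tilde s')$, inherited from the hypothesis on $X$.

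Applying $C(R')$ to the rescaled wave-packet decompositions of $g_{\tilde s},g_{\tilde s'}$ on $X'$ yields
$$\int_{X'}|g_{\tilde s}g_{\tilde s'}|^2\lesssim C(R')(MM')^{1/2}\prod_{j=1}^{2}\Bigl(\sum_{\tilde T\in\ZT(\tilde s_j)}\|g_{\tilde T}\|_4^4\Bigr)^{1/2}.$$
Reversing the change of variables, the Jacobian factor $(d_1d_2)^{-2}$ from $\int_X|f_sf_{s'}|^2=(d_1d_2)^{-2}\int_{\Phi^{-T}(X)}|g_{\tilde s}g_{\tilde s'}|^2$ cancels exactly against the factor $(d_1d_2)^2$ from $\|g_{\tilde T}\|_4^4=(d_1d_2)^2\|f_T\|_4^4$, producing the claimed estimate with constant $C(Rr^2/K_1^2)$.

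The main obstacle is the geometric verification that the tube-count multiplicity $(M,M')$ survives the rescaling up to a constant. The crucial alignment is between the ellipsoid thin axis $\tilde x_3$ and the image tube directions: the $O(1)$ angle between them forces each tube to either traverse the whole $\tilde x_3$-stack of ellipsoids or miss it entirely, which is what defeats the nominal $1/d$-inflation of the multiplicity coming from ellipsoid stacking inside a $\sqrt{R'}$-ball.
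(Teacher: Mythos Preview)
Your proof is correct and follows the same approach as the paper's. The paper's own proof is extremely terse: it places $s,s'$ inside $2r/K_1$-separated $r/K_1$-squares, rescales by $K_1/r$, and covers $X$ by $(\sqrt{R},\sqrt{R},\sqrt{R}K_1/r)$-tubes which become $\sqrt{R'}$-balls after rescaling. You carry out the same parabolic rescaling but describe the geometry on the image side (ellipsoids stacking inside $\sqrt{R'}$-balls) rather than the preimage side (elongated covering tubes); these are dual descriptions of the same argument, and your computation that the image tube directions $(1,1,\pm1)/\sqrt3$ make an $O(1)$ angle with the thin ellipsoid axis is exactly the content of the paper's implicit observation that the covering tubes are nearly parallel to the $R$-tubes in $\ZT(s),\ZT(s')$.
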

	\begin{proof}
		Place $s,s'$ inside $2r/K_1$-separated $r/K_1$-squares. Rescale them by a factor $K_1/r$. The resulting squares are transverse.  Cover $X$ by $(\sqrt{R},\sqrt{R}, \sqrt{R}K_1/r)$- tubes. These tubes become $r\sqrt{R}/K_1$-squares under parabolic rescaling. 
	\end{proof}
    \smallskip

	We next present the key technical tool that replaces a layer of terms $g_\omega$ with another layer of smaller scale. 
	\begin{proposition}
	\label{pmainnnnnnnnnn}	
		Let $r\gtrsim K_3^{-1}$. Fix an $R^{1/2}$-ball $Q$.
		Assume $\{\omega\}$ is a collection of pairwise disjoint $(r/K_2,r)$-rectangles in general position. Then  one of the following is true:
		
		(1) there is  $R^{-1/2}K_1\lesssim r'\le r/K_2$
		and a collection of pairwise disjoint $(r'/K_2,r')$-rectangles  $\omega'\subset \bigcup\omega$ in general position such that 
		\begin{equation}
		\label{irojiojgiortjgiortjghio}
		\sum_{\omega}\|g_\omega\|_{L^4(Q)}^2\lesssim_\e (\frac{r}{r'})^{100\frac{\log{K_1}}{\log K_2}} \sum_{\omega'}\|g_{\omega'}\|_{L^4(Q)}^2,\end{equation}
		
		(2) we have
		\begin{equation}
			\label{dhdgfgjhuh3}\sum_{\omega}\|g_\omega\|_{L^4(Q)}^2\lesssim_\e R^\e \sum_{\theta\subset \bigcup\omega}\|f_{\theta}\|_{L^4(Q)}^2,
		\end{equation}
		where the last sum is over a partition of $\bigcup\omega$ into $R^{-1/2}$-squares $\theta$.
	\end{proposition}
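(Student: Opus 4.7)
The plan is an iterative decoupling scheme that executes one narrow/broad (NB) dichotomy per scale, producing a chain $r=r^{(0)}>r^{(1)}>\cdots$ with $r^{(k+1)}=r^{(k)}/K_2$. At each step, the broad branch decouples to $(r^{(k+1)}/K_2,\,r^{(k+1)})$-rectangles, while the narrow branch produces square-function terms over $r^{(k)}/(K_1K_2)$-squares. A final pigeonhole between the terminal broad output and the accumulated narrow contributions yields either conclusion (1) or conclusion (2).

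For the one-step reduction, expand $g_\omega^2=\sum_{(s,s')}|f_s f_{s'}|$ over the $O(K_1)$ almost-adjacent pairs of $(r/K_2,r/K_1)$-rectangles and apply Cauchy--Schwarz to get $\int_Q g_\omega^4\lesssim K_1\sum\int_Q|f_s f_{s'}|^2$. For each pair, Lemma \ref{procC} decouples $\int_Q|f_s f_{s'}|^2$ into pairs of $r/K_2$-squares $(t,t')$; these pairs inherit general position from $\omega$, have $d\sim r/K_1$, and satisfy $dR^{1/2}(r/K_2)K_2\gtrsim 1$ (using $r\gtrsim K_3^{-1}$). The NB dichotomy then bounds each $\int_Q|f_t f_{t'}|^2$ by $K_1^{O(1)}$ times either a broad term at scale $r/K_2$ or a narrow term at scale $r/(K_1K_2)$. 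Collecting over all pairs inside $\omega$ and applying Cauchy--Schwarz $\int|gg'|^2\le \|g\|_4^2\|g'\|_4^2$ to reassemble both aggregated terms as squares of sums yields the per-$\omega$ bound
$$\|g_\omega\|_{L^4(Q)}^2\lesssim K_1^{O(1)}\Bigl(\sum_{\omega^{(1)}\subset\omega}\|g_{\omega^{(1)}}\|_{L^4(Q)}^2+\sum_{\beta\subset\omega}\|f_\beta\|_{L^4(Q)}^2\Bigr).$$

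Iterate the broad part $n$ times, with $n$ maximal so that $r^{(n)}=r/K_2^n\geq R^{-1/2}K_1$; the accumulated loss is $K_1^{O(n)}=(r/r^{(n)})^{O(\log K_1/\log K_2)}$, matching the loss in conclusion (1). For each narrow contribution $\sum_\beta\|f_\beta\|_{L^4(Q)}^2$ appearing at an intermediate scale, apply parabolic rescaling to turn the $(K_2$-transverse$)$ pair of squares into a transverse pair at the rescaled scale $Rr^2/K_1^2\gg 1$, then invoke the bilinear decoupling Theorem \ref{dec-thm} to bound the narrow contribution by $R^{\e/2}\sum_\theta\|f_\theta\|_{L^4(Q)}^2$ over $R^{-1/2}$-squares. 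Since $K_1^{O(n)}=R^{O(\e^2)}\ll R^{\e/2}$, the cumulative narrow loss stays below $R^\e$. Pigeonholing between the terminal broad output and the accumulated narrow output produces either conclusion (1) with $r'=r^{(n)}$ or conclusion (2).

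The main obstacle will be the careful bookkeeping: verifying that general position and the transversality condition $dR^{1/2}r^{(k)}K_2\gtrsim 1$ (with $d\sim r^{(k)}/K_1$) are preserved at every intermediate scale, and that the per-step narrow contributions can be uniformly converted to $R^{-1/2}$-square square-functions with only $R^{\e/2}$ loss via parabolic rescaling plus Theorem \ref{dec-thm}. The threshold $r'\geq R^{-1/2}K_1$ appears precisely because the transversality hypothesis with $d\sim r/K_1$ saturates at this scale; once $r^{(k)}<R^{-1/2}K_1$ the NB dichotomy becomes inapplicable and only case (2) remains available.
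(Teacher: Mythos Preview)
There is a genuine gap in your handling of the narrow contributions. You propose to convert each narrow term $\sum_\beta\|f_\beta\|_{L^4(Q)}^2$ (with $\beta$ at an intermediate scale $r^{(k)}/(K_1K_2)$) to $R^{-1/2}$-squares via parabolic rescaling and Theorem~\ref{dec-thm}. But Theorem~\ref{dec-thm}, in its localized form, gives decoupling only on balls of radius comparable to the full Fourier scale. After rescaling by $K_1/r^{(k)}$, the pair $(\beta_1,\beta_2)$ becomes transverse at scale $R(r^{(k)}/K_1)^2$, while $Q$ rescales to a region whose shortest side is only $R^{1/2}K_1/r^{(k)}$; for $r^{(k)}$ in the relevant range this is far smaller than $R(r^{(k)}/K_1)^2$, so the localized decoupling cannot reach $(R(r^{(k)}/K_1)^2)^{-1/2}$-squares, hence cannot reach $R^{-1/2}$-squares after undoing the scaling. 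Using the global version of Theorem~\ref{dec-thm} instead would produce $\|f_\theta\|_{L^4(\R^3)}$ on the right, not $\|f_\theta\|_{L^4(Q)}$, which is not conclusion~(2).

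The paper proceeds differently. It retains the bilinear square-function form $\int_Q(\sum_{\beta_1}|f_{\beta_1}|^2)(\sum_{\beta_2}|f_{\beta_2}|^2)$ and iterates the NB dichotomy directly on the pairs $(\beta_1,\beta_2)$ (this is Case~(a), the ``narrow streak''). The essential point, which your scheme loses, is that throughout this streak the distance $d$ between the two squares in a pair stays $\sim r/K_1$ --- the squares remain inside the original $\alpha_1,\alpha_2$ --- so the condition $dR^{1/2}r_1K_2\gtrsim 1$ survives all the way down to $r_1\sim (K_1K_3/K_2)R^{-1/2}$. At that point a triangle inequality (loss $(K_1K_3/K_2)^{O(1)}\lesssim R^\e$) reaches $R^{-1/2}$-squares and yields conclusion~(2). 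This also explains where the lower bound $r'\gtrsim R^{-1/2}K_1$ actually comes from: it emerges because $d$ is frozen at $\sim r/K_1$ during the narrow streak, not because $d\sim r^{(k)}/K_1$ as in your broad iteration. Your final paragraph misidentifies this mechanism; with $d\sim r^{(k)}/K_1$ the transversality condition would in fact break down well before $r^{(k)}\sim R^{-1/2}K_1$.
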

	\begin{proof}
		For each $\omega$, fix $r/K_1$-separated $(r/K_2,r/K_1)$-rectangles $s_1(\omega),s_2(\omega)\subset\omega$ such that
		$$\|g_\omega\|_{L^4(Q)}^2\le K_1^{O(1)}(\int_Q|f_{s_1(\omega)}f_{s_2(\omega)}|^2)^{1/2}.$$
		Note that 
		\begin{equation}
		\label{keychoice}
		r\gtrsim K_3^{-1}\implies r^2\sqrt{R}\gtrsim K_1K_2^2.
	\end{equation}
		We may thus apply Lemma \ref{procC}	to each $\omega$ to get
		\begin{equation}
			\label{dhdgfgjhuh1}
			\sum_{\omega}\|g_\omega\|_{L^4(Q)}^2\lesssim K_1^{O(1)}\sum_{\omega}\; (\sum_{\alpha_1\subset s_1(\omega)}\sum_{\alpha_2\subset s_2(\omega)}\int_{Q}|f_{\alpha_1}f_{\alpha_2}|^2)^{1/2},
		\end{equation}
		where the pairs $(\alpha_1,\alpha_2)$ consist of $r/K_1$-separated $r/K_2$-squares inside $\omega$, in general position.
		We note that due to \eqref{keychoice}, each such pair satisfies \eqref{firuegrtug8tu8uh8u8u}, so it is either narrow or broad. At the expense of a multiplicative  factor of 2,  we may assume all pairs $(\alpha_1,\alpha_2)$ are of the same type. Let us explain why. We have
		$$\int_{Q}|f_{\alpha_1}f_{\alpha_2}|^2\le \max\{N(\alpha_1,\alpha_2),B(\alpha_1,\alpha_2)\},$$
		where $N, B$ denote the two terms in \eqref{NB}. We use the abstract inequality ($S(\omega)$ is any collection of pairs)
		$$\sum_{\omega}\;\big(\sum_{(\alpha_1,\alpha_2)\in S(\omega)}\max\{N(\alpha_1,\alpha_2),B(\alpha_1,\alpha_2)\}\big)^{1/2}\le $$
		\begin{equation}
		\label{factor2}
		2\max\left \{\sum_{\omega}\;\big(\sum_{(\alpha_1,\alpha_2)\in S(\omega)}N(\alpha_1,\alpha_2)\big)^{1/2},\;\sum_{\omega}\; \big(\sum_{(\alpha_1,\alpha_2)\in S(\omega)}B(\alpha_1,\alpha_2)\big)^{1/2}\right\}.
		\end{equation}
	\medskip
    
	    \noindent {\bf Case (a).} Let us assume all $(\alpha_1,\alpha_2)$ are narrow. We process each pair  and find 
		$$\int_{Q}|f_{\alpha_1}f_{\alpha_2}|^2\le C \int_{Q}\sum_{\beta_1\subset \alpha_1}|f_{\beta_1}|^2\sum_{\beta_2\subset \alpha_2}|f_{\beta_2}|^2,
		$$
		where $\beta_i$ are  $r/(K_1K_2)$-squares partitioning $\alpha_i$.
		
		By the same principle mentioned above, the pairs $(\beta_1,\beta_2)$ can also be assumed to all (this means all pairs corresponding to all $(\alpha_1,\alpha_2)$ and all $\omega$) be either narrow or broad. Let us see what happens if the streak of narrow terms continues for $m$ steps. 
        Since we are in Case (a), we know $m\ge 1$. 
        We run this streak for as long as possible. 
        At the end of it, we are left with the inequality
		\begin{equation}
			\label{dhdgfgjhuh2}\int_{Q}|f_{\alpha_1}f_{\alpha_2}|^2\le C^{m} \int_{Q}\sum_{\gamma_1\subset \alpha_1}|f_{\gamma_1}|^2\sum_{\gamma_2\subset \alpha_2}|f_{\gamma_2}|^2,
		\end{equation}
		where $\gamma_i$ are $r/K_2(K_1)^m$-squares partitioning $\alpha_i$.	The value $m$ is the same for all $(\alpha_1,\alpha_2)$ corresponding to all $\omega$. Moreover, one of two things must happen.
	\medskip
		
	    {\bf Case (a1).} We have essentially reached the bottom scale $R^{-1/2}$. More precisely, the scale $r_1=r/K_2(K_1)^m$ of the terminal squares $\gamma_i$ satisfies $R^{-1/2}\lesssim r_1\lesssim K_1K_2K_3R^{-1/2}$. The choice of the cutoff $K_1K_2K_3R^{-1/2}$ is informed by the necessity of \eqref{ejfirueueg8uu09609y09568y09} being true while $r_1\gtrsim K_1K_2K_3R^{-1/2}$.

			Then  \eqref{dhdgfgjhuh3} follows by combining \eqref{dhdgfgjhuh1}, \eqref{dhdgfgjhuh2}, Minkowski's inequality and the triangle inequality 
		$|f_{\gamma_i}|\le \sum_{\theta\subset \gamma_i}|f_\theta|$.
		The triangle inequality produces the loss 
		\begin{equation}
		\label{cjrecfuugug965uy09u}
		(K_1K_2K_3)^{O(1)}\lesssim_\e R^\e.
		\end{equation}
		Since $K_1^m\lesssim R^{1/2}$, the loss $C^m$ in \eqref{dhdgfgjhuh2} is $O(R^{\frac{100}{\log K_1}})=O_\e(1)$. Also, we lose one factor 2 in \eqref{factor2} for each of the $m$ steps, but this is again acceptable.
	\medskip
		
		{\bf Case (a2).} The other possibility is that the final scale $r_1=r/K_2(K_1)^m$ satisfies $r_1\gtrsim K_1K_2K_3 R^{-1/2}$. Let us explain the reason why the streak must end at such an early stage. Throughout this streak, the distance between new pairs of squares does not decrease. Thus, the distance $d_1$ between terminal pairs $(\gamma_1,\gamma_2)$ satisfies $d_1\ge r/K_1$.
		Using these and the fact that $r\gtrsim K_3^{-1}$ implies that
		\begin{equation}
		\label{ejfirueueg8uu09609y09568y09}
		d_1 R^{1/2}r_1\gtrsim K_2.
		\end{equation}
		Thus, according to \eqref{firuegrtug8tu8uh8u8u},  $(\gamma_1,\gamma_2)$ is  either narrow or broad. But since the narrow streak came to a halt, the pair must be broad. Thus 
		\begin{equation}
        \label{vjgurtiyo0uoj0-ok-}
			\int_{Q}|f_{\gamma_1}f_{\gamma_2}|^2\lesssim K_1^{O(1)}\sum_{\omega_1\subset \gamma_1}\sum_{\omega_2\subset \gamma_2}\int_{Q}|g_{\omega_1}g_{\omega_2}|^2.
		\end{equation}
		Here $\omega_i$ are $(r_1/K_2,r_1)$-rectangles in general position. Their initial orientation is decided not just by individual $\gamma_i$, but by the pair $(\gamma_1,\gamma_2)$. 
        However, we note the following.
        Since $\gamma_i\subset \alpha_i\subset \omega$ and $\dist(\alpha_1,\alpha_2)\ge r/K_1$, the directions of the line $\ell(\gamma_1,\gamma_2)$ joining their centers differs by  $\le r(K_2)^{-1}/r(K_1)^{-1}=K_1/K_2$ from the direction of the central line $\ell_\omega$ of $\omega$.
        Since the eccentricity of $\omega_i$ is $1/K_2$, we may arrange that the orientation of $\omega_i$ is universal. 
        More precisely, we may take each $\omega_i$ to point in the direction perpendicular to $\ell_\omega$. This will come at the expense of the factor $\frac{K_1/K_2}{1/K_2}=K_1$, which fits well into the acceptable loss for a broad step. 
		
		When combining \eqref{dhdgfgjhuh1},  \eqref{dhdgfgjhuh2} and \eqref{vjgurtiyo0uoj0-ok-} we get
		\begin{align*}
			\|g_\omega\|_{L^{4}(Q)}^2 &\lesssim C^mK_1^{O(1)}(\sum_{\omega_1\subset s_1(\omega)}\sum_{\omega_2\subset s_2(\omega)}\int_{Q}|g_{\omega_1}g_{\omega_2}|^2)^{1/2}&\\&=C^mK_1^{O(1)}(\int_{Q}\sum_{\omega_1\subset s_1(\omega)}|g_{\omega_1}|^2\sum_{\omega_2\subset s_2(\omega)}|g_{\omega_2}|^2)^{1/2}\\&\lesssim_\e K_1^{O(1)}(\int_{Q}(\sum_{\omega'\subset \omega}|g_{\omega'}|^2)^2)^{1/2}.
		\end{align*}
		Here $\omega'$ is simply the generic notation for either $\omega_1$ or $\omega_2$.
        In the last step, we dispense with bilinearity between $s_1(\omega)$ and $s_2(\omega)$, as each term $g_{\omega'}$ encodes new transversality. 
        Finally, Minkowski's inequality and summation over $\omega$ lead to 
		$$\sum_{\omega}\|g_\omega\|_{L^4(Q)}^2\lesssim_\e K_1^{100}\sum_{\omega}\sum_{\omega'\subset \omega}\|g_{\omega'}\|_{L^4(Q)}^2.$$
		Note that the rectangles $\omega'$ are pairwise disjoint. This is because all $\omega$ are pairwise disjoint,  all $\gamma_i\subset s_i(\omega)$ are pairwise disjoint, and all $\omega_i\subset \gamma_i$ are pairwise disjoint.
        \smallskip
		
		We are at the end of Case (a2). We may take $r'=r_1$ and we are done, as $r'\le r/(K_1K_2)\le r/K_2$, and thus $K_1\le (r/r')^{\frac{\log K_1}{\log K_2}}$.
		\bigskip

	    \noindent	{\bf Case (b).} Assume all pairs $(\alpha_1,\alpha_2)$ are broad. Then, simply by definition, we get
		$$\int_{Q}|f_{\alpha_1}f_{\alpha_2}|^2\lesssim K_1^{O(1)}\sum_{\omega_1\subset\alpha_1}\sum_{\omega_2\subset \alpha_2}\int_{Q}|g_{\omega_1}g_{\omega_2}|^2.$$
		Here $\omega_i$ are $(r'/K_2,r')$-rectangles, where $r'=r/K_2$.
		As explained in the previous case, the orientation of $\omega_i$ is perpendicular to $\ell_\omega$ (the orientation of the parent rectangle). When combined with \eqref{dhdgfgjhuh1} this leads to
		\begin{align*}
			\|g_\omega\|_{L^4(Q)}^2&\lesssim K_1^{O(1)}(\sum_{\alpha_1\subset s_1(\omega)}\sum_{\alpha_2\subset s_2(\omega)}\sum_{\omega_1\subset\alpha_1}\sum_{\omega_2\subset \alpha_2}\int_{Q}|g_{\omega_1}g_{\omega_2}|^2)^{1/2}\\&\lesssim  K_1^{O(1)}(\int_{Q}\sum_{\omega_1\subset s_1(\omega)}|g_{\omega_1}|^2\sum_{\omega_2\subset s_2(\omega)}|g_{\omega_2}|^2)^{1/2}\\&\le K_1^{O(1)}(\int_{Q}(\sum_{\omega'\subset \omega}|g_{\omega'}|^2)^2)^{1/2}\\&\le (r/r')^{100\log K_1/\log K_2} \sum_{\omega'\subset \omega}\|g_{\omega'}\|_{L^4(Q)}^2. 
		\end{align*}
		Here $\omega'$ are $(r'/K_2,r')$-rectangles. We are done in this case, too, by summing this inequality over all $\omega$.  \qedhere

	\end{proof}

	\begin{remark}[The value and the role of $K_3$]
    \label{remark-K-3}
    \rm 
    
	Let us briefly recap the previous argument, in order to explain our choice of the stopping time $K_3$. The first time we used $K_3$ was in \eqref{keychoice}. This inequality by itself would allow $K_3$ to be as large as $\approx R^{1/4}$. However, \eqref{cjrecfuugug965uy09u}	forces $K_3\approx 1$. We recall that the cutoff $K_1K_2K_3$ appearing in \eqref{cjrecfuugug965uy09u} is enforced by \eqref{ejfirueueg8uu09609y09568y09}.
	
	The final induction on scales argument will show that $K_3$ has to be slightly larger than $K_2$. See \eqref{ejiorjfiougioturgiou}.
	\end{remark}
    
	\begin{remark}[$O_\e(1)$ many choices for partitions]
	\label{poi5tu549u5609ug09uy09}
    \rm

	An inspection of the argument shows  that if \eqref{irojiojgiortjgiortjghio}	happens, then $\{\omega'\}$ form a partition of $\cup_{\omega}(s_1(\omega)\cup s_2(\omega))$. This partition may depend on $Q$. However, it is not difficult to see that there are only $O_\e(1)$ such partitions that may arise for various $Q$. Indeed, the partition is entirely determined by the scale $r'$ of the $\omega'$, which takes the form $r/K_2(K_1)^m$, for some $m\ge 0$ ($m=0$ in case (b) of the proof). Since we also have $r'\gtrsim R^{-1/2}$, it follows that $m=O_\e(1)$.
	\end{remark}
    
    \smallskip 
	The following result holds by simply iterating the previous proposition.
	\begin{proposition}
	\label{12345}	
		Let $r\le 1$. Fix an $R^{1/2}$-ball $Q$.
        Assume $\{\omega\}$ is a collection of  $(r/K_2,r)$-rectangles in general position. 
        Then there is a scale  $K_1R^{-1/2}\lesssim r'\lesssim K_3^{-1}$ and there is a  collection $\{\omega'\}$ consisting of pairwise disjoint $(r'/K_2,r')$-rectangles  $\omega'\subset \bigcup \omega$ in general position such that 
		$$\sum_{\omega}\|g_\omega\|_{L^4(Q)}^2\lesssim_\e \sum_{\omega'}(\frac{r}{r'})^{100\frac{\log{K_1}}{\log K_2}} \|g_{\omega'}\|_{L^4(Q)}^2+R^\e \sum_{\theta\subset \bigcup \omega}\|f_{\theta}\|_{L^4(Q)}^2,
		$$
		where the last sum is over a partition of $\bigcup \omega$ into $R^{-1/2}$-squares $\theta$.
	\end{proposition}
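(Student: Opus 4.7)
The plan is to iterate Proposition \ref{pmainnnnnnnnnn} until the scale either drops below $K_3^{-1}$ or option (2) fires. I would maintain a current collection $\{\omega^{(i)}\}$ of $(r_i/K_2, r_i)$-rectangles in general position with $\omega^{(i)}\subset \bigcup \omega$. As long as $r_i \gtrsim K_3^{-1}$, I apply Proposition \ref{pmainnnnnnnnnn} to the current collection: option (2) terminates the iteration with the desired $R^{-1/2}$-square bound, while option (1) supplies a new scale $r_{i+1} \le r_i/K_2$ with $r_{i+1}\gtrsim K_1 R^{-1/2}$ and a new collection inside $\bigcup \omega^{(i)}$. I stop either when option (2) fires or when the scale first drops below $K_3^{-1}$, setting $r'$ to be the last scale produced. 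If the initial $r$ already satisfies $r\lesssim K_3^{-1}$, the conclusion holds trivially with $r'=r$ and $\{\omega'\}=\{\omega\}$.

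The core calculation is bookkeeping. Each option-(1) step shrinks the scale by at least $K_2$, so the iteration depth is at most $\log K_3/\log K_2 = \e^{-2}$, which is $O_\e(1)$. Writing the sequence of scales as $r = r_0 > r_1 > \cdots > r_N = r'$, the product of option-(1) losses telescopes:
$$\prod_{i=1}^{N}\left(\frac{r_{i-1}}{r_i}\right)^{100 \log K_1/\log K_2} = \left(\frac{r}{r'}\right)^{100 \log K_1/\log K_2},$$
which is precisely the prefactor of the first term in the conclusion. If option (2) fires at some intermediate step $i$, the resulting bound, propagated through the preceding option-(1) steps, is at most $(r/r_i)^{100 \log K_1/\log K_2}\cdot R^\e \sum_\theta \|f_\theta\|^2$; using the crude bound $r/r_i \le R^{1/2}$ together with $\log K_1/\log K_2 = \e^2$, this extra prefactor is at most $R^{50\e^2}$, which is at most $R^\e$ once $\e$ is taken small. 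Thus every intermediate option-(2) contribution can be absorbed into the $R^\e$ factor of the second term.

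The main subtlety I anticipate is verifying that the nested collections $\{\omega^{(i)}\}$ remain compatible across iterations: they must stay pairwise disjoint, in general position, and contained in $\bigcup \omega$. Proposition \ref{pmainnnnnnnnnn}(1) delivers all three properties at each step, and the chain of inclusions $\bigcup \omega^{(i+1)} \subset \bigcup \omega^{(i)}$ propagates, so that any $R^{-1/2}$-square partition of an intermediate $\bigcup \omega^{(i)}$ extends to a sub-partition of $\bigcup \omega$. A minor point to check at the end is that $r'$ lies in the advertised range $K_1 R^{-1/2} \lesssim r' \lesssim K_3^{-1}$: the lower bound is inherited from Proposition \ref{pmainnnnnnnnnn}(1) applied at the final option-(1) step, while the upper bound is built into the stopping rule.
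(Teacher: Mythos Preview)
Your proposal is correct and follows exactly the paper's approach: iterate Proposition \ref{pmainnnnnnnnnn} until either option (2) fires or the scale first drops below $K_3^{-1}$, handling the trivial case $r\lesssim K_3^{-1}$ separately. Your bookkeeping (telescoping of the $(r_{i-1}/r_i)^{100\log K_1/\log K_2}$ losses, the $O_\e(1)$ bound on the iteration depth, and the absorption of the accumulated prefactor into $R^\e$ when option (2) fires) spells out details the paper leaves implicit, but the argument is the same.
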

	\begin{proof}
		If $r\le K_3^{-1}$ we may take $r'=r$ and $\{\omega'\}=\{\omega\}$. Otherwise apply Proposition \ref{pmainnnnnnnnnn} to the collection $\{\omega'\}$. Repeat this process until either the scale $r'$ gets smaller than $K_3^{-1}$ and the first term dominates, or the scale gets down to $R^{-1/2}$ and the second term dominates. \qedhere

	\end{proof}
    \begin{remark}[Tree depth and $O_\e(1)$ many partitions]
    \label{pfuf}
    \rm 
    For a given collection $\{\omega\}$, the collection $\{\omega'\}$ depends on $Q$. However, there are only $O_\e(1)$ possible collections that may arise this way. Indeed, since each application of Proposition \ref{pmainnnnnnnnnn} decreases the scale by a multiplicative factor of at least $K_2$, the resulting tree has $O_\e(1)$ many layers. 
    As observed in Remark \ref{poi5tu549u5609ug09uy09}, each layer determines the next layer up to $O_\e(1)$ many choices. Then of course, $O_\e(1)^{O_\e(1)}=O_\e(1)$. 
	
    \end{remark}		
    
The next result serves the purpose of separating the contributions from the initial pair of transverse squares $\Omega_1,\Omega_2$. This is necessary due to the presence of the geometric average in the intended upper bound \eqref{ref-dec-esti}.	
\begin{proposition}
\label{rejgiotjgiotjio}	
Let $\Omega_1,\Omega_2$ be transverse $1/K_2$-squares. Fix some $R^{1/2}$-square $Q$. Then
one of the following is true:

(1) there is $r\gtrsim K_1 R^{-1/2}$ and a family of pairwise disjoint $(r/K_2,r)$-rectangles $\omega_i\subset \Omega_i$, in general position, such that
\begin{equation}
	\label{cccc1}
\int_{Q}|f_{\Omega_1}f_{\Omega_2}|^2\lesssim K_1^{O(1)}\sum_{\omega_1\subset \Omega_1}\|g_{\omega_1}\|_{L^4(Q)}^2\sum_{\omega_2\subset \Omega_2}\|g_{\omega_2}\|_{L^4(Q)}^2,
\end{equation}

(2) we have 
\begin{equation}
\label{cccc2}
\int_{Q}|f_{\Omega_1}f_{\Omega_2}|^2\lesssim K_2^{O(1)}
\int_Q\sum_{\theta\subset \Omega_1}|f_\theta|^2
\sum_{\theta\subset \Omega_2}|f_{\theta}|^2,
\end{equation}
where $\theta$ are $R^{-1/2}$-squares.
\end{proposition}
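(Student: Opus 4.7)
The approach is to apply the broad/narrow dichotomy \eqref{NB} directly to the transverse pair $(\Omega_1,\Omega_2)$ and iterate the narrow branch in the style of Case (a) of Proposition \ref{pmainnnnnnnnnn}. Since transversality gives general position with slope $\sim 1$, and $\Omega_i$ are $r=1/K_2$-squares at distance $d\sim 1$, the scale condition $dR^{1/2}rK_2\sim R^{1/2}\gtrsim 1$ is satisfied, so \eqref{NB} applies and yields
$$\int_Q|f_{\Omega_1}f_{\Omega_2}|^2\lesssim\max\{N(\Omega_1,\Omega_2),\,B(\Omega_1,\Omega_2)\}.$$
The two alternatives are then processed separately.

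If the broad term $B$ wins, we get $\int_Q|f_{\Omega_1}f_{\Omega_2}|^2\lesssim K_1^{O(1)}\sum_{\omega_1\subset\Omega_1}\sum_{\omega_2\subset\Omega_2}\int_Q|g_{\omega_1}g_{\omega_2}|^2$ with $\omega_i$ being the $(1/K_2^2,1/K_2)$-rectangles in general position supplied by \eqref{NB}. Setting $G_i^2=\sum_{\omega_i\subset\Omega_i}|g_{\omega_i}|^2$, this is $K_1^{O(1)}\int_Q G_1^2G_2^2$; Cauchy--Schwarz in $L^2(Q)$ gives $\int_Q G_1^2G_2^2\le\|G_1^2\|_{L^2(Q)}\|G_2^2\|_{L^2(Q)}$, and Minkowski's inequality produces $\|G_i^2\|_{L^2(Q)}\le\sum_{\omega_i}\|g_{\omega_i}\|_{L^4(Q)}^2$. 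These together separate the $\omega_1,\omega_2$ indices and yield \eqref{cccc1} with $r=1/K_2\gtrsim K_1R^{-1/2}$ (the last inequality because $K_1K_2\ll R^{1/2}$).

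If instead the narrow term $N$ wins, the bound reduces to $\sum_{\beta_1,\beta_2}\int_Q|f_{\beta_1}f_{\beta_2}|^2$ over pairs of $1/(K_1K_2)$-squares $\beta_i\subset\Omega_i$, and one iterates \eqref{NB} on each subpair. Because $\Omega_i$ have diameter $1/K_2\ll 1$ while being separated by $\sim 1$, both general position and \eqref{firuegrtug8tu8uh8u8u} transfer verbatim to every subpair encountered; using the elementary bound $\max\{N,B\}\le N+B$ at each step, the iteration breaks into a telescoping sum of at most $m=O(\e^{-6})$ broad outputs plus a terminal narrow remainder. At every broad step, the Cauchy--Schwarz+Minkowski separation from the previous paragraph converts the broad sum into \eqref{cccc1} with $r=r_m\gtrsim K_1R^{-1/2}$ (the analog of \eqref{ejfirueueg8uu09609y09568y09}); and if the narrow streak persists all the way down to scale $\sim R^{-1/2}$, Minkowski combined with the pointwise triangle inequality $|f_{\gamma_i}|\le\sum_{\theta\subset\gamma_i}|f_\theta|$ delivers \eqref{cccc2}. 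Selecting the dominant of the $O_\e(1)$ terms picks whichever of (1) or (2) holds.

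The main obstacle is the bookkeeping that keeps the narrow iteration honest: verifying that general position and the distance condition \eqref{firuegrtug8tu8uh8u8u} survive at every layer of the iteration (which follows from $\Omega_i$ being well-separated and small), and confirming that the broad-termination scale obeys the lower bound $r_m\gtrsim K_1R^{-1/2}$ required by \eqref{cccc1}. Once these technical points are verified, the accumulated constants $C^m=O_\e(1)$ from the narrow iterations and $K_1^{O(1)}$ from the broad step combine into losses that fit within the bounds claimed by \eqref{cccc1} and \eqref{cccc2}.
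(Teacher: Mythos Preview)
Your proposal is correct and follows essentially the same approach as the paper: apply \eqref{NB} to the transverse pair $(\Omega_1,\Omega_2)$, iterate the narrow branch (noting that $d\sim 1$ throughout so \eqref{firuegrtug8tu8uh8u8u} always holds), and terminate either with a broad output (giving \eqref{cccc1} after Cauchy--Schwarz and Minkowski, exactly as you describe) or at the bottom scale $R^{-1/2}$ (giving \eqref{cccc2}). The only cosmetic difference is that the paper uses the factor-$2$ device \eqref{factor2} to select a single narrow-or-broad type at each level, whereas you carry the sum $N+B$ through all levels and pick the dominant of the resulting $O_\e(1)$ terms at the end; both bookkeeping choices incur the same $C^m=O_\e(1)$ loss.
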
	
\begin{proof}
The argument is an easier version of the one in 	Proposition \ref{pmainnnnnnnnnn}. Matters related to growth of constants and orientation of rectangles are identical.

If the pair $(\Omega_1,\Omega_2)$ is broad, then we may take $r=1/K_2$ in \eqref{cccc1}. Indeed, first by definition, then by H\"older's inequality followed by Minkowski's inequality we have
\begin{align*}
\int_{Q}|f_{\Omega_1}f_{\Omega_2}|^2&
\lesssim K_1^{O(1)}\int_Q\sum_{\omega_1\subset\Omega_1}|g_{\omega_1}|^2\sum_{\omega_2\subset \Omega_2}|g_{\omega_2}|^2\\&\lesssim K_1^{O(1)}(\int_Q(\sum_{\omega_1\subset\Omega_1}|g_{\omega_1}|^2)^2)^{1/2}(\int_Q(\sum_{\omega_2\subset \Omega_2}|g_{\omega_2}|^2)^2)^{1/2}
\\&\lesssim K_1^{O(1)}\sum_{\omega_1\subset \Omega_1}\|g_{\omega_1}\|_{L^4(Q)}^2\sum_{\omega_2\subset \Omega_2}\|g_{\omega_2}\|_{L^4(Q)}^2.
\end{align*} 	
Here $\omega_i$ are $(1/K_2^2,1/K_2)$-rectangles.

Let us now assume $(\Omega_1,\Omega_2)$ is narrow. In fact, let us assume that the narrow streak persists for $m$ steps ($m\ge 1$).  After these $m$ steps we have the upper bound
$$
\int_{Q}|f_{\Omega_1}f_{\Omega_2}|^2
\lesssim \sum_{\gamma_1\subset \Omega_1}\sum_{\gamma_2\subset \Omega_2}\int_{{Q}}|f_{\gamma_1}f_{\gamma_2}|^2,$$
where $\gamma_i$ are $1/(K_2K_1^m)$-squares. Note that the distance between pairs remains $\sim 1$, so the  hypothesis $dR^{1/2}r \gtrsim K_2$ in \eqref{firuegrtug8tu8uh8u8u}   is satisfied for $r$ all the way down to the smallest scale $r\sim K_2R^{-1/2}$.

 The streak ends because of  one of two reasons. Either the pairs $(\gamma_1,\gamma_2)$ are broad, in which case we end the argument by repeating the computations from the previous case, with $(\Omega_1,\Omega_2)$ replaced with $(\gamma_1,\gamma_2)$. We get \eqref{cccc1} with $r=1/(K_2K_1^m)$. The other possibility is that the scale of $\gamma_i$ is $\sim K_2R^{-1/2}$, in which case we have \eqref{cccc2} (via another application of the triangle inequality). \qedhere

\end{proof}

We next analyze the case when \eqref{cccc1} holds. The next result will then be applied separately to $\Omega=\Omega_1$ and $\Omega=\Omega_2$.

\begin{proposition}
\label{fjgjirtgihiythiuij09u}	
Let $X$ be a collection of $R^{1/2}$-balls $Q$. Let $\Omega\subset [-1,1]^2$ be a square.
Let $f=\sum_{T\in\ZT}f_T$ be a sum of scale $R$ wave packets so that ${\rm supp}(\wh f)\subset N_{R^{-1}}(\ZH_{\Omega})$. 
Suppose there is $M\geq1$ such that each  $Q\subset X$ intersects  at most $ M$ many $R$-tubes from $\ZT$.
Let $r\le 1$. 
Consider a collection of pairwise disjoint $(r/K_2,r)$-rectangles $\omega\subset \Omega$ in general position. 
Then ($g$ depends on $f$, as in \eqref{function-g})	
$$\sum_{Q\subset X}(\sum_{\omega}\|g_{\omega}\|_{L^4(Q)}^2)^{2}\lesssim $$$$\left((\log R)^{O(1)}\sup_{r'\lesssim K_3^{-1}}(\frac{r}{r'})^{200\frac{\log{K_1}}{\log K_2}}C(R(r')^2/K_1^2)+R^\e\right) M\sum_{T\in\ZT}\big\|f_{T}\big\|_4^4.$$

\end{proposition}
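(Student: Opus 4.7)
The plan is to apply Proposition~\ref{12345} pointwise on each $R^{1/2}$-ball $Q \in X$, pigeonhole to fix a single terminal scale $r'$ and partition $\{\omega'\}$ across $Q$, and then reduce $g_{\omega'}$ to its defining almost-adjacent pairs $(s,s')$ so that Proposition~\ref{jijgutjhguthughru} can be applied at the reduced scale $R(r')^2/K_1^2$. For each $Q$, Proposition~\ref{12345} returns a pair $(r'_Q,\{\omega'_Q\})$ together with either the intermediate-scale bound (with $r'_Q\lesssim K_3^{-1}$) or the wave-packet bound involving $R^{-1/2}$-squares $\theta$. By Remark~\ref{pfuf}, $\{\omega'_Q\}$ takes only $O_\e(1)$ values and $r'_Q$ ranges over $O(\log R)$ dyadic values, so a pigeonhole at cost $(\log R)^{O(1)}$ lets me fix a single $(r',\{\omega'\})$ valid for every $Q$ in a representative subcollection.

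For the wave-packet case, Cauchy--Schwarz across the active $\theta$ (of cardinality $\le M$) combined with the near one-to-one correspondence $\theta\mapsto T(\theta,Q)$ on $Q$ gives $\sum_Q(\sum_\theta\|f_\theta\|_{L^4(Q)}^2)^2 \lesssim M\sum_T\|f_T\|_4^4$, contributing the $R^\e$ term in the statement. For the intermediate-scale case, I expand $g_{\omega'}^2=\sum_{(s,s')}|f_s f_{s'}|$ and use the triangle inequality to bound $\sum_{\omega'}\|g_{\omega'}\|_{L^4(Q)}^2 \le \sum_{(s,s')}\|f_s f_{s'}\|_{L^2(Q)}$, then Cauchy--Schwarz the pair sum per $Q$, swap the $Q$ and pair sums, and apply Proposition~\ref{jijgutjhguthughru} to each almost-adjacent pair. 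Writing $A_s=\sum_{T\in\ZT(s)}\|f_T\|_4^4$, this produces an estimate of the form
\begin{equation*}
\sum_Q\Big(\sum_{\omega'}\|g_{\omega'}\|_{L^4(Q)}^2\Big)^2 \lesssim (\text{C--S loss})\cdot C(R(r')^2/K_1^2)\sum_{(s,s')}(M_sM_{s'})^{1/2}A_s^{1/2}A_{s'}^{1/2}.
\end{equation*}

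The main obstacle is reconciling two different $M$-type losses---one from the per-$Q$ Cauchy--Schwarz over active pairs (trivially $\le 2M$, since each active $s$ carries $\le 2$ almost-adjacent neighbors and the number of active $s$ per $Q$ is $\le M$) and one from the factor $(M_sM_{s'})^{1/2}\le M$ supplied by Proposition~\ref{jijgutjhguthughru}---whose naive combination yields $M^2$ instead of the single $M$ required by the statement. To resolve this, I would dyadically pigeonhole on the per-$Q$ incidence values $M_s(Q)\sim 2^j$ and $M_{s'}(Q)\sim 2^{j'}$: on each dyadic level the active-pair count per $Q$ is $\lesssim M/2^{\max(j,j')}$ by the conservation $\sum_s M_s(Q)\le M$, while applying Proposition~\ref{jijgutjhguthughru} to the subset of $X$ restricted to that level yields a factor that scales as $2^{(j+j')/2}$; the two contributions combine to give at most $M$ per level, and summing over the $O((\log R)^2)$ dyadic level pairs produces exactly the $(\log R)^{O(1)}$ factor in the statement. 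Assembling the two cases then yields the claimed inequality.
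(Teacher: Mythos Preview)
Your approach is correct and reaches the same conclusion as the paper, via the same underlying conservation principle but organized at a finer level of granularity.

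The paper handles the intermediate-scale term by pigeonholing at the level of the rectangles $\omega'$: for a fixed dyadic $N$, each $Q$ has $\sim N$ rectangles $\omega'$ contributing significantly to $\sum_{\omega'}\|g_{\omega'}\|_{L^4(Q)}^2$, and each such $\omega'$ has $\lesssim M/N$ tubes through $Q$ (by the conservation $\sum_{\omega'} M_{\omega'}(Q)\le M$). The Cauchy--Schwarz loss over the $N$ active $\omega'$ then cancels exactly against the multiplicity $M/N$ coming from Proposition~\ref{jijgutjhguthughru}, which the paper applies directly to $\|g_{\omega'}\|_{L^4(\cup_{Q\sim\omega'}Q)}^4$ via the parenthetical ``geometric averages replaced by sums''. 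You instead expand $g_{\omega'}$ one layer further into its constituent almost-adjacent pairs $(s,s')$ and pigeonhole on the dyadic values of $(M_s(Q),M_{s'}(Q))$; the same conservation $\sum_s M_s(Q)\le M$ forces the active-pair count at level $(j,j')$ to be $\lesssim M/2^{\max(j,j')}$, which cancels against the factor $2^{(j+j')/2}$ from Proposition~\ref{jijgutjhguthughru}. Both routes exploit the identity $(\text{active count})\times(\text{per-piece multiplicity})\le M$; the paper's version is a bit more economical since it stays at the $\omega'$ level and avoids the double dyadic sum over $(j,j')$, but your argument is equally valid and makes the numerology more explicit.
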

\begin{proof}
We apply Proposition \ref{12345} to each $Q$.
We get a scale $K_1R^{-1/2}\lesssim r'\lesssim K_3^{-1}$ and a collection of  pairwise disjoint $(r'/K_2,r')$-rectangles  $\omega'\subset \Omega$ in general position such that 
$$\sum_{\omega}\|g_\omega\|_{L^4({Q})}^2\lesssim_\e \sum_{\omega'}(\frac{r}{r'})^{100\frac{\log{K_1}}{\log K_2}} \|g_{\omega'}\|_{L^4({Q})}^2+R^\e \sum_{\theta\subset \Omega}\|f_{\theta}\|_{L^4(Q)}^2,
$$
where the last sum is over a partition of $\Omega$ into $R^{-1/2}$-squares $\theta$.
\smallskip

We first note the upper bound for the second term
$$\sum_{Q\subset X}(\sum_{\theta\subset \Omega}\|f_{\theta}\|_{L^4({Q})}^2)^2\lesssim M\sum_{\theta\subset \Omega}\|f_{\theta}\|_{L^4({X})}^4\lesssim M\sum_{\theta\subset \Omega}\|f_{\theta}\|_{L^4({\R^3})}^4\sim M\sum_{T\in\ZT}\big\|f_{T}\big\|_4^4.$$
\smallskip

For the first term, we first pigeonhole and assume each $Q$ has the same collection $\{\omega'\}$, cf. Remark \ref{pfuf}. Via another pigeonholing, we  may also assume that, for some fixed $N$,  each $Q$ receives a $(\log R)^{-O(1)}$-fraction of the contribution to the integral from $\sim M/N$ tubes, from each of $\sim N$ many rectangles $\omega'$. For such a pair, we write $Q\sim\omega'$.
Then
\begin{align*}
\sum_{Q}\;(\sum_{\omega'}\|g_{\omega'}\|_{L^4(Q)}^2)^2&\lesssim (\log R)^{O(1)} N\sum_{Q}\sum_{\omega'\sim Q}\|g_{\omega'}\|_{L^4(Q)}^4\\&=(\log R)^{O(1)} N\sum_{\omega'}\|g_{\omega'}\|_{L^4(\cup_{Q\sim\omega'}Q)}^4.
\end{align*}
By Proposition \ref{jijgutjhguthughru} (with geometric averages replaced by sums), $$\|g_{\omega'}\|_{L^4(\cup_{Q\sim\omega'}Q)}^4\lesssim C(R(r')^2/K_1^2) K_1^{O(1)}M/N\sum_{T\in\ZT_{\omega'}}\|f_{T}\big\|_4^4.$$
Finally, we  combine the last two inequalities and sum over $\omega'$, noting that the collections $\ZT_{\omega'}$ are pairwise disjoint, since the rectangles $\omega'$ are pairwise disjoint. \qedhere

\end{proof}

\smallskip

We combine the previous two propositions to prove the following theorem.
\begin{theorem}
\label{jregiurtiguiuhyuh8}	
We have$$C(R)\lesssim_\e$$
\begin{equation}
\label{fiojugurtuhgiyt9hi0967i}	
 (K_1K_2)^{O(1)}\left((\log R)^{O(1)}\sup_{r'\lesssim K_3^{-1}}(\frac{1}{r'})^{200\frac{\log{K_1}}{\log K_2}}C(R(r')^2/K_1^2)+R^\e\right).
\end{equation}
\end{theorem}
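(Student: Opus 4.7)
The plan is to start from the localization $\int_X |f_1 f_2|^2 = \sum_{Q \subset X} \int_Q |f_1 f_2|^2$, then on each $R^{1/2}$-ball $Q$ run the broad-narrow dichotomy of Proposition \ref{rejgiotjgiotjio} at initial scale $1/K_2$, and finally separate the contributions of $f_1$ and $f_2$ via a Cauchy--Schwarz in $Q$ so that Proposition \ref{fjgjirtgihiythiuij09u} applies independently to each factor. The induction then comes for free from the constant $C(R(r')^2/K_1^2)$ appearing in that proposition.

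First I would partition $\tau_j$ into $1/K_2$-squares $\Omega_j$ (or set $\Omega_j=\tau_j$ if $\tau_j$ is already smaller) and pigeonhole a dominant transverse pair $(\Omega_1,\Omega_2)$; since $\tau_1,\tau_2$ are already transverse, this reduction costs only $K_2^{O(1)}$, absorbed by the outer $(K_1 K_2)^{O(1)}$. For each such pair and each $Q$, Proposition \ref{rejgiotjgiotjio} leaves two cases. In case (2) we have
\begin{equation*}
\int_Q |f_{\Omega_1} f_{\Omega_2}|^2 \lesssim \int_Q \sum_{\theta \subset \Omega_1} |f_\theta|^2 \sum_{\theta \subset \Omega_2} |f_\theta|^2,
\end{equation*}
and summing over $Q\subset X$, applying Cauchy--Schwarz, and using the hypothesis that each $Q$ meets at most $M_j$ tubes from $\ZT_j$ delivers a contribution no worse than $R^\e (M_1 M_2)^{1/2} \prod_j \bigl(\sum_{T\in \ZT_j}\|f_T\|_4^4\bigr)^{1/2}$, which feeds the $R^\e$ summand on the right-hand side of \eqref{fiojugurtuhgiyt9hi0967i}.

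The main case is (1), where Proposition \ref{rejgiotjgiotjio} yields
\begin{equation*}
\int_Q |f_{\Omega_1} f_{\Omega_2}|^2 \lesssim K_1^{O(1)} \Big(\sum_{\omega_1 \subset \Omega_1} \|g_{\omega_1}\|_{L^4(Q)}^2\Big) \Big(\sum_{\omega_2 \subset \Omega_2} \|g_{\omega_2}\|_{L^4(Q)}^2\Big).
\end{equation*}
Summing over $Q$ and applying Cauchy--Schwarz in $Q$ factors this into the product of two sums of the form $\sum_{Q\subset X}\bigl(\sum_{\omega_j\subset\Omega_j}\|g_{\omega_j}\|_{L^4(Q)}^2\bigr)^2$, each of which is precisely the left-hand side of Proposition \ref{fjgjirtgihiythiuij09u}. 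Applying that proposition to each factor supplies the inductive constant $C(R(r')^2/K_1^2)$, the polynomial loss $(1/r')^{100 \log K_1/\log K_2}$ per factor (doubling to $200$ after multiplying the two factors), the $(\log R)^{O(1)}$ pigeonholing losses, and the counting factors $M_j$ which, combined under the geometric mean, produce the required $(M_1 M_2)^{1/2}$.

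The main technical subtlety I anticipate is that the scale $r'$ and the rectangle collection $\{\omega'\}$ produced by Proposition \ref{12345} depend both on $Q$ and on the index $j\in\{1,2\}$, so a priori one cannot run the two sides with coupled parameters. By Remarks \ref{poi5tu549u5609ug09uy09} and \ref{pfuf} only $O_\e(1)$ collections arise, so a logarithmic pigeonholing over $Q$ fixes, on a dominant subset of $X$ for each side independently, a common scale $r'_j$ and a common partition. Once this is done the two Cauchy--Schwarz factors decouple completely and Proposition \ref{fjgjirtgihiythiuij09u} may be invoked separately in each. Summing over the $K_2^{O(1)}$ choices of dominant pair $(\Omega_1,\Omega_2)$ and taking the sup over $r'_1,r'_2\lesssim K_3^{-1}$ then recombines everything into \eqref{fiojugurtuhgiyt9hi0967i}.
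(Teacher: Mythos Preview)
Your proposal is correct and follows essentially the same route as the paper. One small correction on the ``technical subtlety'': the $Q$-dependence you must pigeonhole away \emph{before} the Cauchy--Schwarz step is that of the family $\{\omega_i\}$ and scale $r$ output by Proposition~\ref{rejgiotjgiotjio} (this scale takes the form $r=1/(K_2K_1^m)$ for some $m=O_\e(1)$, so there are only $O_\e(1)$ possible families), not the family $\{\omega'\}$ and scale $r'$ from Proposition~\ref{12345}. The latter pigeonholing is already carried out \emph{inside} the proof of Proposition~\ref{fjgjirtgihiythiuij09u}, so once $\{\omega_i\}$ is fixed across $Q$ you may invoke that proposition as a black box; Remarks~\ref{poi5tu549u5609ug09uy09} and~\ref{pfuf} are therefore not needed here.
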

\begin{proof}
Let $\tau_1,\tau_2$ be arbitrary transverse squares, and we let $f_1,f_2,M_1,M_2,X$ be as in the definition of $C(R)$. We partition $\tau_i$ into $1/K_2$-squares $\Omega_i$, and use the triangle inequality to write
\begin{equation}
\label{f54kgjrihyihiyhiytuh}
\int_{X}|f_1f_2|^2\le K_2^{O(1)}\sum_{\Omega_1,\Omega_2}\int_X|f_{\Omega_1}f_{\Omega_2}|^2.\end{equation}
We next fix $\Omega_1,\Omega_2$ and apply Proposition \ref{rejgiotjgiotjio} to each $Q\subset X$. We analyze the only nontrivial scenario, when \eqref{cccc1} holds for each $Q\subset X$. As before, we may assume that the resulting family $\{\omega_i\}$ is the same for each $Q$. We sum \eqref{cccc1} over $Q\subset X$
and use Cauchy--Schwarz
$$\int_{X}|f_{\Omega_1}f_{\Omega_2}|^2\lesssim K_1^{O(1)}(\sum_{Q\subset X}(\sum_{\omega_1\subset \Omega_1}\|g_{\omega_1}\|_{L^4(Q)}^2)^2)^{1/2}(\sum_{Q\subset X}(\sum_{\omega_2\subset \Omega_2}\|g_{\omega_2}\|_{L^4(Q)}^2)^2)^{1/2}.$$
We next apply Proposition \ref{fjgjirtgihiythiuij09u} to each of the terms
$$\int_{X}|f_{\Omega_1}f_{\Omega_2}|^2\lesssim (M_1M_2)^{1/2}(\sum_{T\in\ZT_{\Omega_1}}\big\|f_{T}\big\|_4^4)^{1/2}(\sum_{T\in\ZT_{\Omega_2}}\big\|f_{T}\big\|_4^4)^{1/2}\;\times$$
$$K_1^{O(1)}\left((\log R)^{O(1)}\sup_{r'\lesssim K_3^{-1}}(\frac{1}{r'})^{200\frac{\log{K_1}}{\log K_2}}C(R(r')^2/K_1^2)+R^\e\right).$$
The theorem follows by combining this with \eqref{f54kgjrihyihiyhiytuh}. \qedhere

\end{proof}

\begin{proof}[Proof of Theorem \ref{refined-dec-thm}]

The proof of Theorem \ref{refined-dec-thm} as a corollary of Theorem \ref{jregiurtiguiuhyuh8}
is standard. 
We assume $C(R)\sim R^\alpha$, and prove that $\alpha\le 2\e$ for all $\e>0$. 
The choice of $K_1,K_2,K_3$ should be in such a way that prevents the first term in \eqref{fiojugurtuhgiyt9hi0967i} to dominate when $\alpha=2\e$. That means, we need 
$$R^{\alpha}\gg R^\alpha (K_1K_2)^{O(1)}\frac1{K_3^{2\alpha-200\e^2}},\text{ with }\alpha=2\e.$$
This means, we need 
\begin{equation}
\label{ejiorjfiougioturgiou}	
K_3\ge (K_1K_2)^{1/\e}.
\end{equation}
This justifies our initial choice for $K_3$. 

Since we now know that the second
term in \eqref{fiojugurtuhgiyt9hi0967i} dominates, we are left with 
\[C(R)\lesssim_\e (K_1K_2)^{O(1)}R^\e\lesssim R^{2\e}. \qedhere\]

\end{proof}
	
\bigskip

\section{Restriction estimates}

We start by pointing out a few key differences in our notation here, compared to the earlier sections. 
Throughout this section, $f$ will be a function of two (rather than three) variables, that we denote by $(\xi_1,\xi_2)$ (rather than $(\xi,\eta)$). Given a rectangle $\tau\subset [-1,1]^2$, the notation $f_\tau$ will now be reserved to denote $f\Id_\tau$.

Standard arguments reduce Conjecture \ref{restriction-conj} for $n=3$, $S=\ZH_{[-1,1]^2}$ to the following version.
\begin{conjecture}
\label{restriction-conj-local}
Define the extension operator
$$
    E f(x_1,x_2,x_3):=\int_{[-1,1]^2}e^{i(x_1\xi_1+x_2\xi_2+x_3\xi_1\xi_2)}f(\xi_1, \xi_2)d\xi_1d\xi_2.
$$
Then for $p>3$, the following is true:
For all $\e>0$, there exists $C_\e>0$ such that for all $R\geq 1$,
\begin{equation}
\label{restriction-esti-local}
    \|Ef\|_{L^p(B_R)}^p\leq C_\e R^\e\|f\|_p^p.
\end{equation}
\end{conjecture}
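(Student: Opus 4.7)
The strategy is to reduce to the bilinear setting where Theorem \ref{refined-dec-thm} applies, and then combine the bilinear refined decoupling with the incidence geometry from \cite{Wang-Wu}. First I would perform a broad-narrow decomposition at some scale $K \approx R^{\e^2}$: for each $x \in B_R$, either a single cap $\tau$ of size $K^{-1}$ dominates $|Ef(x)|$ (narrow case), or two caps $(\tau_1,\tau_2)$ transverse in the sense of Definition \ref{transversality} contribute comparably (broad case). The narrow case, after the anisotropic rescaling of $\ZH$ inside $\tau$, is an inductive instance of the same estimate at a smaller scale; the caps adjacent to the two rulings $\{\xi_1=0\}$ and $\{\xi_2=0\}$ of $\ZH$ require special care and reduce to a one-dimensional restriction estimate along the fibers. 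The genuinely new content lies in the broad case.

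For the broad contribution I would decompose each $Ef_{\tau_j}$ into scale-$R$ wave packets $\{f_T\}_{T\in\ZT_j}$ and pigeonhole until (i) all wave packets of $\ZT_j$ have comparable amplitude $A_j$, (ii) the bilinear superlevel set is a union $X$ of $R^{1/2}$-balls on which $|Ef_{\tau_1}\cdot Ef_{\tau_2}|^{1/2}$ has a single dyadic value $\mu$, and (iii) each $Q\subset X$ meets $\sim M_j$ tubes in $\ZT_j$. To prepare for the incidence input I next perform a two-ends reduction at some scale $\e_1$, arranging that the resulting shadings on $\ZT_j$ satisfy Definition \ref{two-ends-def}; this costs only $R^{O(\e_1)}$.

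At this point Theorem \ref{refined-dec-thm} gives
$$\mu^4 |X| \lesssim \int_X|Ef_{\tau_1}\cdot Ef_{\tau_2}|^2 \lesssim_\e R^\e (M_1 M_2)^{1/2}\prod_{j=1}^2\Bigl(\sum_{T\in\ZT_j}\|f_T\|_4^4\Bigr)^{1/2},$$
and each $\|f_T\|_4^4$ is bounded by a standard expression in $A_j$ and the tube dimensions. Interpolating with the trivial pointwise bound $|Ef_{\tau_j}(x)|\lesssim A_j M_j$ on $X$ then converts the target $\|Ef\|_{L^p(B_R)}^p$ bound into a geometric constraint of the form $|X|\cdot \mu^p \lesssim R^\e\cdot(\text{tube counts})$. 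The remaining ingredient is a lower bound on the volume of $\bigcup_{T\in\ZT_j}Y_j(T)$ that controls the product $|X|\cdot M_j$, which is exactly what the two-ends Furstenberg conjecture (Conjecture \ref{two-ends-furstenberg-conj}) with $n=3$, $\delta=R^{-1/2}$ delivers, through the $\lambda^{(n-1)/2}=\lambda$ gain. Optimizing in $\mu, A_j, M_j$ and taking $\e_1$ small closes the argument for all $p>3$, which is Theorem \ref{restriction-thm-2}; for the unconditional Theorem \ref{restriction-thm} one substitutes Conjecture \ref{two-ends-furstenberg-conj} by the current best incidence bound from \cite{Wang-Wu}, which only permits $p>22/7$.

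The main obstacle I anticipate is ensuring that the many pigeonholing layers --- broad-narrow, wave packet amplitudes, the values of $M_j$ and $\mu$, and the two-ends reduction --- are compatible and cost only $R^\e$ in aggregate, and that the interpolation between the $L^2(X)$-bound coming from Theorem \ref{refined-dec-thm} and the $L^\infty(X)$-bound is set up so that the exponent $p$ emerges sharply from the Furstenberg input. A subsidiary hyperbolic-specific difficulty is that the geometric argument must respect the two rulings of $\ZH$ throughout: during every rescaling and every broad-narrow step one must verify that the relevant $R$-tubes remain directionally separated at scale $R^{-1/2}$ and that the transversality of $(\tau_1,\tau_2)$ persists, which is precisely why the hyperbolic transversality was formulated via the two coordinate separations in Definition \ref{transversality} rather than as a mere distance condition.
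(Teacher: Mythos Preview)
Your overall architecture matches the paper's (broad--narrow, wave packet pigeonholing, two-ends reduction, Theorem \ref{refined-dec-thm}, incidence from \cite{Wang-Wu}), but the interpolation step has a genuine gap. You propose to combine the $L^4$ output of refined decoupling with the pointwise bound $|Ef_{\tau_j}|\lesssim A_jM_j$; however, $L^4$ interpolated against $L^\infty$ only reaches exponents $p\ge 4$. To descend to $p=22/7$ (or conditionally $p=3$) the paper pairs the $L^4$ estimate with an $L^2$ bound, Lemma \ref{lem: l2}: on the set where the shading has density $\lambda$ the bilinear $L^2$ integral is $\lesssim(\lambda r)\|f\|_2^2$, while refined decoupling together with the multiplicity bound of Proposition \ref{kakeya-prop} gives an $L^4$ estimate scaling like $(\lambda r)^{-3/4}$ (see \eqref{l4} and \eqref{l2-1}). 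Taking the product $\eqref{l4}^{4/7}\cdot\eqref{l2-1}^{3/7}$ eliminates $\lambda$ and lands exactly at $p=22/7$; under Conjecture \ref{two-ends-furstenberg-conj} the $L^4$ scaling improves to $(\lambda r)^{-1}$ and the same mechanism reaches $p=3$. Your sketch never isolates the density parameter $\lambda$ on which this cancellation depends, and the $L^\infty$ substitute cannot supply an estimate below $L^4$.

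Two further divergences are worth flagging. First, the hyperbolic broad--narrow reduction is three-way, not two-way (Proposition \ref{broad-narrow-lem}): besides the single-square and broad alternatives there is the case where a horizontal or vertical $K^{-1}$-strip dominates, and this is handled not by a one-dimensional restriction estimate along the fibers but by the anisotropic rescaling $(\xi_1,\xi_2)\mapsto(K\xi_1,\xi_2)$, which preserves $\ZH$ and reduces to the full three-dimensional estimate at scale $R/K$; see \eqref{rescaling}. Second, the two-ends reduction can fail (the case $\beta\le r^{\e^4}$ in Step 2 of the proof of Proposition \ref{broad-prop}); that branch is closed by an inner induction on the radius within the mixed-norm estimate \eqref{mixed-norm-2}, using that each tube meets only $\lesssim r^{\e^4}$ of the $r^{1-\e^2}$-balls. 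Your sketch budgets $R^{O(\e_1)}$ for the two-ends reduction but does not address the alternative when the reduction does not produce a two-ends shading.
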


Thus, to prove Theorem \ref{restriction-thm}, it suffices to prove the following result.
\begin{theorem}
\label{restriction-thm-local}
Inequality \eqref{restriction-esti-local} is true when $p= 22/7$.
\end{theorem}

\medskip

\subsection{Wave packet decomposition and incidence geometry}
\label{ssWP}
We will construct a wave packet decomposition and state some of its key properties for later use.
The wave packet decomposition is quite standard nowadays. 
See, for example, \cite{Demeter-book}.

\smallskip

Given the $\e$ in Conjecture \ref{restriction-conj-local}, we fix the tiny constant 
\begin{equation}
\label{fiourf8ur8gut8gu8tu}
\e_0=\e^{1000}.
\end{equation}
In the frequency space, let $\Theta$ be a finite-overlapping cover of $[-1,1]^2$ by $R^{-1/2}$-balls $\theta$, and let $\{\vp_\theta\}_{\theta\in \Theta}$ be a smooth partition of unity so that $\supp(\vp_\theta)\subset 2\theta$ and $\sum_{\theta\in\Theta} \vp_\theta=1$ on $[-1,1]^2$. For $f:[-1,1]^2\to\C$ we abuse earlier notation and write
$$f_\theta=f\vp_\theta.$$

In the physical space, let $\cv$ be a finite-overlapping partition of $\ZR^{2}$ by $R^{1/2}$-balls, and let $\{\psi_v\}_{v\in \cv}$ be a smooth partition of unity of $\ZR^{2}$ so that $\wh\psi_v$ is concentrated near $v$, $\supp(\wh\psi_v)\subset B^{2}(0,R^{-1/2})$ and $\sum_{v\in\cv}\psi_v=1$ in $\ZR^{2}$.

\smallskip

The above frequency-space partition gives the wave packet decomposition for any function $f$ supported on $[-1,1]^2$
\begin{equation}
	\nonumber
	f=\sum_{\theta\in \Theta}\sum_{v\in\cv}(f\vp_\theta)\ast\hat\psi_v=:\sum_{(\theta,v)\in\Theta\times\cv}f_{\theta,v}.
\end{equation}
For $x\in\ZR^3$, write $x=(\bar x, x_3)$. 
Let $\Phi(\xi)=\xi_1\xi_2$.
For each $\theta\in \Theta$ and each $v\in\cv$, let $T_{\theta,v}=\{(\bar x,x_3)\in B_R:|\bar x-c_v+x_3\nabla\Phi(c_\theta)|\leq R^{1/2+\e_0} \}$ be a tube of dimensions $R^{1/2+\e_0}\times R^{1/2+\e_0}\times R$, where $c_\theta,c_v$ are the centers of $\theta,v$ respectively. 
Denote by $V(\theta)$ the vector $(1, \nabla\Phi(c_\theta))$.
Let $\bar\ZT(\theta)=\{T_{\theta,v}:v\in\cv\text{ and }T_{\theta,v}\cap B_R\not=\varnothing\}$ be a family of $R$-tubes with direction $V(\theta)$, and let $\bar\ZT=\bigcup_\theta\bar\ZT(\theta)$. 
If $T=T_{\theta,v}$, we write 
\begin{equation}
\label{jiojgiotgiutu8hu8y}
f_T=f_{\theta,v}, \;\theta=\theta_T.
\end{equation}

\smallskip

The next lemma is standard.

\begin{lemma}
	\label{wpt}
	The wave packet decomposition satisfies the following properties.
	\begin{enumerate}\item $Ef=\sum_{T\in\bar\ZT}Ef_{T}$.
		\item $|Ef_{T}(x)|\lesssim R^{-1000}$ when $x\in B_R\setminus T$.
		\item $\supp f_{T}\subset 3\theta$ when $T$ has direction $V(\theta)$.
		\item $\{V(\theta)\}_{\theta\in\Theta}$ are $\gtrsim R^{-1/2}$-separated.
		\item $\bar\ZT(\theta)$ is $R^{O(\e_0)}$-overlapping.
		\item $\|Ef_T\|_{L^p(w_{B_R})}\lesssim R^{2(\frac{1}{p}-\frac{1}{2})} \|Ef_T\|_{L^2(w_{B_R})}$ for all $T\in\bar\ZT $, where $w_{B_R}$ is a weight that is $\sim1$ on $B_R$ and decreases rapidly outside $B_R$. 
	\end{enumerate}    
\end{lemma}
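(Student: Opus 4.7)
The strategy is to verify the six items separately; items (1)--(5) are essentially formal consequences of the construction, and the analytic content is concentrated in (2) and (6).

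For (1), the frequency partition $\sum_\theta\vp_\theta\equiv 1$ on $\supp f$ gives $f=\sum_\theta f\vp_\theta$, while $\sum_v\psi_v\equiv 1$ on $\R^2$ Fourier-transforms to $\sum_v\hat\psi_v=\delta_0$ as a tempered distribution, so $\sum_v(f\vp_\theta)*\hat\psi_v=f\vp_\theta$, and summing over $\theta$ recovers $f$. Item (3) is the support of a convolution: $\supp f_{\theta,v}\subset 2\theta+B(0,R^{-1/2})\subset 3\theta$. Item (4) holds because the map $c_\theta\mapsto V(\theta)=(1,c_{\theta,2},c_{\theta,1})$ is bi-Lipschitz (reflecting the non-vanishing Hessian of $\Phi$), so $R^{-1/2}$-separated centers translate to $\gtrsim R^{-1/2}$-separated directions. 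Item (5) is a counting statement: all tubes in $\bar\ZT(\theta)$ share the direction $V(\theta)$ and have cross-sections that are $R^{1/2+\e_0}$-balls about $R^{1/2}$-separated centers $c_v$, so any point lies in $\lesssim R^{O(\e_0)}$ of them.

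The principal analytic work is item (2). Using the convolution definition of $f_{\theta,v}$ and the substitution $\xi=\eta+\zeta$,
\[
Ef_{\theta,v}(x)=\iint e^{i(\bar x\cdot(\eta+\zeta)+x_3\Phi(\eta+\zeta))}\,(f\vp_\theta)(\eta)\,\hat\psi_v(\zeta)\,d\eta\,d\zeta.
\]
Since $\Phi(\eta+\zeta)=\Phi(\eta)+\nabla\Phi(\eta)\cdot\zeta+\zeta_1\zeta_2$ exactly, and both $|\zeta|\lesssim R^{-1/2}$ (on $\supp\hat\psi_v$) and $|x_3|\lesssim R$ (on $B_R$), the quadratic remainder $x_3\zeta_1\zeta_2$ is $O(1)$ and is absorbed into a bounded smooth factor. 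The inner $\zeta$-integral then evaluates to a constant multiple of $\psi_v(\bar x+x_3\nabla\Phi(\eta))$, and Schwartz decay of $\psi_v$ around $c_v$ (together with $\eta\in 2\theta$ so that $\nabla\Phi(\eta)=\nabla\Phi(c_\theta)+O(R^{-1/2})$) produces $|Ef_{\theta,v}(x)|\lesssim R^{-1000}$ whenever $|\bar x-c_v+x_3\nabla\Phi(c_\theta)|>R^{1/2+\e_0}$, which is precisely the complement of $T_{\theta,v}$.

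Finally, (6) is a Bernstein-type inequality. By (2), $Ef_T$ is essentially supported on the tube $T$ of volume $\sim R^{2+O(\e_0)}$, while its Fourier transform lives in an $R^{-1/2}$-cap; the uncertainty principle then forces $|Ef_T|$ to be roughly constant on scale $R^{1/2}$ transverse to $T$. Both $\|Ef_T\|_{L^p(w_{B_R})}^p$ and $\|Ef_T\|_{L^2(w_{B_R})}^2$ are therefore comparable to $|T|\,\|Ef_T\|_\infty^p$ and $|T|\,\|Ef_T\|_\infty^2$ respectively, and taking their ratio yields $|T|^{1/p-1/2}\sim R^{2(1/p-1/2)+O(\e_0)}$, with the $R^{O(\e_0)}$ slack folded into the implicit constant. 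The only genuine obstacle is controlling the quadratic phase error in (2); for the hyperbolic paraboloid this remainder is exactly $x_3\zeta_1\zeta_2 = O(1)$, so everything proceeds as in the standard treatment recorded in Demeter's textbook.
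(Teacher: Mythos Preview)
The paper does not supply a proof of this lemma; it simply declares it ``standard'' and points to the textbook literature. Your argument is exactly the standard one and is correct in all essentials, so there is no discrepancy in approach to discuss.

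Two small points of precision. In item (2), the quadratic remainder $e^{ix_3\zeta_1\zeta_2}$ cannot literally be ``absorbed'' so that the $\zeta$-integral \emph{equals} $\psi_v(\bar x+x_3\nabla\Phi(\eta))$; rather, after the rescaling $\zeta\mapsto R^{-1/2}\zeta$ this factor has uniformly bounded $C^N$ norms, so integration by parts gives the same rapid decay as $\psi_v$ would. In item (6), the remark that the $R^{O(\e_0)}$ slack is ``folded into the implicit constant'' is not correct as stated, since $R^{O(\e_0)}$ is not a constant. What actually happens is that for $p\ge 2$ the exponent $(2+2\e_0)(1/p-1/2)$ is \emph{more negative} than $2(1/p-1/2)$, so your bound is stronger than required. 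Alternatively, and more cleanly, one can bypass the tube volume entirely: the Fourier support of $Ef_T$ in $\R^3$ has volume $\sim R^{-2}$, and Bernstein followed by interpolation with $L^2$ gives the stated exponent exactly, with no $\e_0$ appearing at all.
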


\begin{remark}
	\rm
	
	The fact that $\{V(\theta)\}_{\theta\in\Theta}$ are $\gtrsim R^{-1/2}$-separated is crucial, as it allows us to use Proposition \ref{kakeya-prop} to handle the incidence geometry of wave packets.
\end{remark}

\medskip

The next two results were proved in \cite[Lemma 4.5]{Wang-Wu} and \cite[Proposition 3.2]{Wang-Wu}, respectively.

\begin{lemma}
	\label{lem: l2}
	Let $X$ be a union of $R^{1/2}$-balls, and let $f=\sum_{T\in\ZT}f_T$ be a sum of wave packets.
	Suppose for each $T\in \ZT$ there is a shading $Y(T)\subset T$ by $R^{1/2}$-balls in $X$ such that the number of $R^{1/2}$-balls intersecting $Y(T)$ is $\lesssim \lambda R^{1/2}$.
	Then
	\begin{equation}
		\nonumber
		\int_{X}\big|\sum_{T\in\ZT}Ef_{T}\Id_{Y(T)}\big|^2\lesssim (\la R)\|f\|_2^2.
	\end{equation}
\end{lemma}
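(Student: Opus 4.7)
The strategy is to bound $\|F\|_{L^2(X)}^2$ where $F = \sum_T Ef_T \Id_{Y(T)}$ by localizing to each $R^{1/2}$-ball $B \subset X$, exploiting local almost-orthogonality of the wave packets $\{Ef_T\}_T$ on $B$, and then using the sparsity of each shading $Y(T)$ to gain the factor $\lambda$ over the standard $L^2$-bound $\int_{B_R}|Ef|^2 \lesssim R\|f\|_2^2$.

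For each $R^{1/2}$-ball $B \subset X$, let $\ZT_B = \{T \in \ZT : B \subset Y(T)\}$. Modulo rapidly decaying tails from tubes whose support essentially misses $B$ (controlled via Lemma~\ref{wpt}(2)), one has $F|_B = \sum_{T \in \ZT_B} Ef_T$. I would establish the local orthogonality
\[
\int_B |F|^2 \lesssim R^{O(\e_0)} \sum_{T \in \ZT_B} \int_B |Ef_T|^2
\]
in two stages. First, group by direction: for each $\theta \in \Theta$, Lemma~\ref{wpt}(5) allows at most $R^{O(\e_0)}$ tubes of $\bar\ZT(\theta) \cap \ZT_B$ to meet $B$, so the triangle inequality gives $\int_B |\sum_{T \in \ZT_B \cap \bar\ZT(\theta)} Ef_T|^2 \lesssim R^{O(\e_0)} \sum_{T \in \ZT_B \cap \bar\ZT(\theta)} \int_B |Ef_T|^2$. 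Second, sum over $\theta$: the functions $g_\theta := \sum_{T \in \ZT_B \cap \bar\ZT(\theta)} Ef_T$ have Fourier supports near disjoint $R^{-1/2}$-caps on the paraboloid (Lemma~\ref{wpt}(3,4)); truncating by a Schwartz bump adapted to $B$ whose Fourier transform is supported in an $R^{-1/2}$-ball preserves approximate Fourier orthogonality, so $\int_B |\sum_\theta g_\theta|^2 \lesssim \sum_\theta \int_B |g_\theta|^2$.

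Summing over $B$ and swapping the order of summation yields $\int_X|F|^2 \lesssim R^{O(\e_0)} \sum_T \int_{Y(T)} |Ef_T|^2$. The shading hypothesis now enters through
\[
\int_{Y(T)} |Ef_T|^2 \lesssim \lambda\, \|Ef_T\|_{L^2(w_{B_R})}^2,
\]
which reflects that $|Ef_T|^2$ is essentially concentrated on the $R^{1/2+\e_0} \times R^{1/2+\e_0} \times R$ tube $T$, with comparable $L^2$-mass on each of the $\sim R^{1/2}$ many $R^{1/2}$-balls arrayed along the long axis of $T$; since $Y(T)$ occupies only a $\lambda$-fraction of these, we lose the factor $\lambda$. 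Combining the $R^{O(\e_0)}$-overlap of $\bar\ZT(\theta)$ with the standard Plancherel estimate $\sum_\theta \|Ef_\theta\|_{L^2(w_{B_R})}^2 \lesssim R \|f\|_2^2$ (from thickening the paraboloid into an $R^{-1}$-neighborhood) produces $\sum_T \|Ef_T\|_{L^2(w_{B_R})}^2 \lesssim R^{1+O(\e_0)} \|f\|_2^2$, which delivers the bound $\lambda R \|f\|_2^2$ once the $R^{O(\e_0)}$ losses are absorbed into $\lesssim$.

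The main obstacle is the local almost-orthogonality. The direction separation $R^{-1/2}$ sits exactly at the threshold permitted by the uncertainty principle on an $R^{1/2}$-ball, so Fourier orthogonality across distinct $\theta$'s is not cleanly exact and must be implemented via a careful truncation by a Schwartz bump on $B$ whose Fourier transform is supported in an $R^{-1/2}$-ball, so that boundary effects at $\partial B$ do not spoil the argument. A secondary subtlety is that the physical cutoff $\Id_{Y(T)}$ destroys the clean Fourier support of each $Ef_T$, which is precisely why one must descend to the $B$-by-$B$ analysis before invoking Plancherel, rather than attempting a direct Fourier-side argument on all of $X$.
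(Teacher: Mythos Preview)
The paper does not supply its own proof of this lemma; it simply cites \cite[Lemma 4.5]{Wang-Wu}. Your argument is the standard one and is essentially correct: localize to each $R^{1/2}$-ball $B\subset X$, use $L^2$-orthogonality across caps $\theta$ (via a Schwartz cutoff adapted to $B$) together with the $R^{O(\e_0)}$-overlap of parallel tubes to obtain $\int_B|F|^2\lesssim R^{O(\e_0)}\sum_{T\in\ZT_B}\int_B|Ef_T|^2$, then sum over $B$ and exploit the sparsity of $Y(T)$.

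Two small corrections. First, the assertion that $|Ef_T|^2$ has ``comparable $L^2$-mass on each of the $\sim R^{1/2}$ many $R^{1/2}$-balls along $T$'' is stronger than what holds for an arbitrary wave packet and is not what you need. The clean substitute is the pointwise bound $|Ef_T(x)|\le\|f_T\|_1\lesssim R^{-1/2}\|f_T\|_2$ (Cauchy--Schwarz on $\supp f_T\subset 3\theta$), which gives directly
\[
\int_{Y(T)}|Ef_T|^2\;\lesssim\; |Y(T)|\,R^{-1}\|f_T\|_2^2\;\lesssim\; \lambda R\,\|f_T\|_2^2,
\]
since $|Y(T)|\lesssim \lambda R^{1/2}\cdot R^{3/2}$. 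Summing over $T$ and using $\sum_T\|f_T\|_2^2\lesssim R^{O(\e_0)}\|f\|_2^2$ finishes the proof. Second, in the paper's notation $\lesssim$ does not absorb $R^{O(\e_0)}$ factors; your argument produces a bound of $R^{O(\e_0)}\lambda R\|f\|_2^2$. This is harmless in every application of the lemma here (all uses occur inside estimates that already carry $r^{O(\e^2)}$ losses), but strictly speaking it does not match the stated inequality verbatim.
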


\begin{proposition}
	\label{kakeya-prop}
	Let $\de\in(0,1)$.
	Let $(L,Y)_\de$ be a collection of $\de$-separated lines together with an $(\e_1, \e_2)$-two-ends, $\la$-dense shading. 
	Define $E_L=\bigcup_{\ell\in L}Y(\ell)$.
	Suppose that every $\de$-ball on $\ZS^{2}$ contains $\leq m$ points from the direction set $\{V(\ell):\ell\in L\}$, where $V(\ell)$ is the direction of $\ell$.
	Take $\mu=\de^{-2\e_1}m\la^{-3/4}\de^{-1/2}$. Then there exists a set $E_\mu\subset E_L$ such that $\# L(x)\lessapprox \mu$ for all $x\in E_\mu$, and
	\begin{equation}
		\nonumber
		|E_L\setminus E_\mu|\leq \de^{\e_1}|E_L|.
	\end{equation}
\end{proposition}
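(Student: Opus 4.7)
My plan is to establish Proposition \ref{kakeya-prop} via the 3D hairbrush strategy originating with Wolff, with the two-ends condition and the direction-multiplicity parameter $m$ entering at their natural places. The target bound $\mu \sim \delta^{-2\epsilon_1} m \lambda^{-3/4}\delta^{-1/2}$ has the shape of a classical Kakeya-maximal estimate for $\lambda$-dense, direction-separated line families, and I would derive it by contradiction.

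Set $E_\mu := \{x \in E_L : \#L(x) \le \mu\}$ and assume for contradiction that $|E_L \setminus E_\mu| > \delta^{\epsilon_1}|E_L|$. A dyadic pigeonhole on the multiplicity produces a set $E \subset E_L \setminus E_\mu$ with $|E| \ge \delta^{O(\epsilon_1)}|E_L|$ on which $\#L(x) \sim \mu'$ for some $\mu' \ge \mu$, and a further pigeonhole yields refined shadings $Y'(\ell) \subset Y(\ell) \cap E$ satisfying $|Y'(\ell)| \gtrsim \delta^{O(\epsilon_1)}|Y(\ell)|$. The $\delta^{-2\epsilon_1}$ factor appearing in the definition of $\mu$ is there precisely to absorb these pigeonholing losses together with the cost of activating the two-ends hypothesis.

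Now run Wolff's hairbrush. Pick a popular stem line $\ell_0$ carrying a nontrivial share of the refined shading. The lines $\ell \ne \ell_0$ whose tubes $N_\delta(\ell)$ meet $Y'(\ell_0)$ are organized according to the plane $\pi$ through $\ell_0$ containing $\ell$. The $(\epsilon_1,\epsilon_2)$-two-ends condition on $Y'(\ell_0)$ guarantees that, up to throwing away $O(\delta^{\epsilon_1})$ fractions of the shading, the hairbrush lines intersect $\ell_0$ at quantitatively separated points, so the angle between $\ell$ and $\ell_0$ is bounded below. In each plane $\pi$ one applies C\'ordoba's planar $L^2$ Kakeya bound; the hypothesis on $m$ controls the number of direction classes available per plane. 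Summing the planar bounds over all hairbrushes, combining with the multiplicity identity $\sum_\ell |Y'(\ell)| \sim \mu'|E|$ and the $\lambda$-density lower bound $\sum_\ell |Y(\ell)| \ge \lambda(\#L)\delta^2$, and rearranging forces $\mu' \lesssim \delta^{-2\epsilon_1} m \lambda^{-3/4}\delta^{-1/2}$, contradicting $\mu' \ge \mu$.

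The main obstacle is the $\lambda^{-3/4}$ factor, which is not accessible from a naive $L^2$ (or bush) argument — those would produce only $\lambda^{-1/2}$ — and requires the full 3D hairbrush geometry. The hairbrush argument degenerates exactly when the shading on the stem line $\ell_0$ concentrates in a short subsegment, i.e.\ when the two-ends condition fails, so the technical heart of the proof is to check that the $(\epsilon_1,\epsilon_2)$-two-ends hypothesis is preserved through every pigeonhole step and yields the necessary angular separation of hairbrush lines. A secondary, more routine issue is propagating the direction-separation parameter $m$ cleanly through the planar C\'ordoba step so that it enters the final bound only to the first power, rather than squared.
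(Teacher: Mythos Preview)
The paper does not prove Proposition~\ref{kakeya-prop}; it is quoted verbatim as \cite[Proposition~3.2]{Wang-Wu}, so there is no in-paper argument to compare against. Your hairbrush strategy is the correct and standard route to the exponent pair $(\lambda^{-3/4},\delta^{-1/2})$, and it is essentially certain that Wang--Wu's Proposition~3.2 is proved by the same mechanism (Wolff's $5/2$-dimensional hairbrush bound, adapted to $\lambda$-dense shadings with direction multiplicity $m$).

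One imprecision worth flagging: you attribute the angular separation of the bristle lines from the stem $\ell_0$ to the two-ends condition on $Y'(\ell_0)$. That is not where two-ends enters. The angle between a bristle $\ell$ and the stem $\ell_0$ is handled by a dyadic decomposition in angle (and ultimately by the direction-separation hypothesis, which is where $m$ appears). The essential use of two-ends is on the \emph{bristle} shadings $Y'(\ell)$: it forces a large fraction of $Y'(\ell)$ to lie at distance $\gtrsim \delta^{\epsilon_1}$ from the stem, which is exactly what makes the planar C\'ordoba disjointness effective in each plane $\pi\supset\ell_0$. Without this, the bristle shadings could pile up near the stem and the hairbrush volume lower bound would collapse to the bush bound, yielding only $\lambda^{-1/2}$ as you note. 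You should also verify that the refined shadings $Y'(\ell)$ produced by your pigeonholing step inherit a two-ends condition with comparable parameters; this is routine but must be said, since you pass from $Y$ to $Y'$ before running the hairbrush.
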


\medskip 

\subsection{The broad-narrow reduction}

What follows is a somewhat standard broad-narrow argument. 
The broad function considered here is slightly different from the one introduced in \cite{Guth-R3}. It needs to incorporate the more severe notion of transversality for $\ZH$, as introduced in 
Definition \ref{transversality}.

Assume $K\ge 1$ is dyadic. Let us denote by $\cC_K$ the collection of all dyadic $1/K$-squares in $[-1,1]^2$.
\begin{definition}
	\label{def-broad}
	Let $K\geq A\geq1$.
	We say that a collection $\Tau=\{\tau\}\subset \cC_K$   is {\bf $A$-broad} if
	\begin{enumerate}
		\item $\#\Tau\geq A$.
		\item For $j=1,2$, the $\xi_j$ coordinates $\{c(\tau)_j:\;\tau\in\Tau\}$ of the centers  are $2K^{-1}$-separated.
	\end{enumerate}
\end{definition}
In the proof of Theorem \ref{GMOt} we have referred to the second requirement as $K^{-1}$-transversality.
We note that when $K\sim 1$, this is essentially the same as the concept introduced in  Definition \ref{transversality}.

\begin{definition}
	\label{broad-norm}
	Let $K\geq A\geq1$. Consider a collection $\{F^\tau\}_{\tau\in\cC_K}$ of functions $F^\tau:\R^3\to\C$.
	For any $x\in\ZR^3$, we define the broad function $\Br_A \{F^\tau\}(x)$ as
	\begin{equation}
		\nonumber
		\Br_A \{F^\tau\}(x)=\max_{\cT:\text{ $\cT$ is $A$-broad}}\min_{\tau\in\cT}\;|F^\tau(x)|.
	\end{equation}
\end{definition}
We note that $A\mapsto \Br_A \{F^\tau\}(x)$ is non-increasing. The following two observations are immediate.
\begin{lemma}
	If $A=A_1+A_2+\ldots+A_N$ and $F^{\tau}=F^{\tau}_1+F^{\tau}_2+\ldots+F^{\tau}_N$, then
	\begin{equation}
		\label{broad-triangle}
		\Br_A \{F^\tau\}(x)\leq \Br_{A_1} \{F^{\tau}_1\}(x)+\Br_{A_2}\{F^{\tau}_2\}(x)+\ldots+\Br_{A_N}\{F^{\tau}_N\}(x).
	\end{equation}
\end{lemma}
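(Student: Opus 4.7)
The plan is to argue by contradiction using the pointwise triangle inequality together with a pigeonhole step, exploiting the fact that the broadness conditions in Definition \ref{def-broad} are hereditary under passage to subcollections. Fix $x\in\R^3$, abbreviate $c_i:=\Br_{A_i}\{F^\tau_i\}(x)$, and suppose toward contradiction that $\Br_A\{F^\tau\}(x)>c_1+\ldots+c_N$. Let $\cT\subset\cC_K$ be an $A$-broad collection realizing the maximum in the definition of $\Br_A\{F^\tau\}(x)$, so that $|F^\tau(x)|>c_1+\ldots+c_N$ for every $\tau\in\cT$.

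First I would apply the pointwise inequality $|F^\tau(x)|\leq |F^\tau_1(x)|+\ldots+|F^\tau_N(x)|$ at each $\tau\in\cT$. Since the left side strictly exceeds $\sum_i c_i$, there must be at least one index $i(\tau)\in\{1,\ldots,N\}$ with $|F^\tau_{i(\tau)}(x)|>c_{i(\tau)}$; otherwise the sum of the component moduli at $\tau$ would be bounded by $\sum c_i$. I would then partition $\cT=\bigsqcup_{i=1}^N\cT_i$ by assigning each $\tau$ to, for instance, the smallest admissible $i(\tau)$.

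Next, the pigeonhole bound $\#\cT\geq A=A_1+\ldots+A_N$ forces $\#\cT_i\geq A_i$ for at least one $i$, since otherwise one would have $\sum_i \#\cT_i\leq\sum_i(A_i-1)<A$. The crucial geometric point is that both requirements in Definition \ref{def-broad} -- the cardinality bound and the $2K^{-1}$-separation of $\xi_1$- and $\xi_2$-coordinates of centers -- are automatically inherited by any subcollection of an $A$-broad family. Hence such a $\cT_i$ is itself $A_i$-broad, and then
$$\min_{\tau\in\cT_i}|F^\tau_i(x)|>c_i=\Br_{A_i}\{F^\tau_i\}(x)$$
contradicts the definition of $c_i$ as a maximum over $A_i$-broad collections.

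I do not anticipate a genuine analytic obstacle here; the lemma is essentially bookkeeping once one isolates the heredity of the broadness conditions under restriction. The only small point to watch is that the strict inequality coming from $\Br_A\{F^\tau\}(x)>\sum c_i$ survives through the pigeonhole step, which it does because the subcollections $\cT_i$ are counted by integer cardinalities.
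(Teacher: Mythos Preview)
Your argument is correct: the triangle inequality plus pigeonhole, together with the observation that the separation conditions in Definition~\ref{def-broad} pass to subcollections, is exactly what is needed. The paper itself does not supply a proof; it simply records the lemma as one of two ``immediate'' observations, so there is no alternative approach to compare against. One cosmetic point: your pigeonhole justification ``otherwise $\sum_i \#\cT_i\le\sum_i(A_i-1)<A$'' tacitly treats the $A_i$ as positive integers; the cleaner phrasing, which covers non-integer $A_i$ as well, is that $\#\cT_i<A_i$ for all $i$ would give $\#\cT=\sum_i\#\cT_i<\sum_i A_i=A\le\#\cT$.
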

\begin{lemma}
	If $A\geq 2$, then
	\begin{equation}
		\label{broad-bilinear}
		\Br_A \{F^{\tau}\}(x)\leq \max_{\tau_1,\tau_2:\;K^{-1}-\text{transverse}}|F^{\tau_1}(x)F^{\tau_2}(x)|^{1/2}.
	\end{equation}
\end{lemma}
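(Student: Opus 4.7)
The plan is to unwind the two definitions at play and observe that the condition $A \geq 2$ is essentially the only thing needed. Fix $x \in \R^3$. By the definition of the broad function, there is an $A$-broad family $\mathcal{T}^\star \subset \mathcal{C}_K$ that realizes the maximum, so
$$\Br_A \{F^\tau\}(x) = \min_{\tau \in \mathcal{T}^\star} |F^\tau(x)|.$$
Since $\mathcal{T}^\star$ is $A$-broad and $A \geq 2$, condition (1) of Definition \ref{def-broad} gives $\# \mathcal{T}^\star \geq 2$. Pick any two distinct squares $\tau_1, \tau_2 \in \mathcal{T}^\star$.

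Next I would verify that such $(\tau_1,\tau_2)$ is $K^{-1}$-transverse in the sense used earlier in the paper (see the broad-narrow argument in the proof of Theorem \ref{GMOt}). This is immediate from condition (2) of Definition \ref{def-broad}: the $\xi_j$-coordinates of the centers of all squares in $\mathcal{T}^\star$ are $2K^{-1}$-separated for $j=1,2$, so in particular the centers of $\tau_1$ and $\tau_2$ differ by at least $2K^{-1}$ in each of the two coordinates. This is exactly the definition of $K^{-1}$-transversality.

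Finally, apply the elementary bound $\min(a,b) \leq \sqrt{ab}$ for $a,b \geq 0$ with $a = |F^{\tau_1}(x)|$ and $b = |F^{\tau_2}(x)|$:
$$\Br_A \{F^\tau\}(x) = \min_{\tau \in \mathcal{T}^\star} |F^\tau(x)| \leq \min\bigl(|F^{\tau_1}(x)|,|F^{\tau_2}(x)|\bigr) \leq |F^{\tau_1}(x) F^{\tau_2}(x)|^{1/2}.$$
Since $(\tau_1,\tau_2)$ is one particular $K^{-1}$-transverse pair, the right-hand side is dominated by the maximum over all such pairs, which yields \eqref{broad-bilinear}.

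There is no substantive obstacle here; the only point that requires care is the bookkeeping that links the two slightly different formulations of $K^{-1}$-transversality (coordinate-wise separation of centers in Definition \ref{def-broad} versus the pairwise description used in the proof of Theorem \ref{GMOt}), and these agree by design.
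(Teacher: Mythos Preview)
Your proof is correct and is exactly the argument the paper has in mind; the paper simply calls the lemma ``immediate'' and gives no proof, and what you wrote is the obvious unwinding of Definitions \ref{def-broad} and \ref{broad-norm}.
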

Most of our applications will concern the case when $F^{\tau}=\sum_{T\in\bar{\ZT}\atop{\theta_T\subset\tau}}Ef_T$ for some $f$. We note that the latter equals $Ef_\tau$, where $f_\tau=f1_{\tau}$. In this case, we simply write $\Br_AEf(x)$ for $\Br_A \{F^{\tau}\}$.

The broad norm will be needed in our later arguments, as, unlike the geometric averages in \eqref{broad-bilinear}, it satisfies the (quasi)-triangle inequality \eqref{broad-triangle}. In all our applications, $N$ will be $O((\log R)^{O(1)})$ and the values of $A_i$ will be only logarithmically smaller than $A$.

We now prove the main result in this subsection. The three terms in \eqref{broad-narrow} from below correspond to the terms in the three cases from the proof of Theorem \ref{GMOt}. 

\begin{proposition}[Broad-narrow reduction]
	\label{broad-narrow-lem}
	Let $K\gg 1$.
	Let $\{S_1\}$ be a family of horizontal $1\times K^{-1}$-rectangles, and let $\{S_2\}$ be a family of vertical $K^{-1}\times 1$-rectangles, such that both $\{S_1\}$ and $\{S_2\}$ partition $[-1,1]^2$.
	Then for all $x\in\ZR^3$ and $\e>0$ we have 
	\begin{align}
		\label{broad-narrow}
		|Ef(x)|\lesssim_\e K^{5\e}\max_{\tau\in\cC_K}|Ef_{\tau}(x)|+ K^{2\e}\max_{S_j}|Ef_{S_j}(x)|+K^{3}\cdot\Br_{K^\e} Ef(x).
	\end{align}
\end{proposition}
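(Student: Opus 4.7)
The plan is to run a three-case broad-narrow analysis paralleling the three cases in the proof of Theorem \ref{GMOt}. Fix $x \in \R^3$ and write $M = \max_{\tau \in \cC_K} |Ef_\tau(x)|$. If $|Ef(x)| \leq C K^{5\e} M$ for an appropriate absolute constant $C$, the first term of \eqref{broad-narrow} already dominates. Otherwise, define the significant caps
\begin{equation*}
\mathcal{T}_{\mathrm{sig}} = \{\tau \in \cC_K : |Ef_\tau(x)| \geq K^{-2-\e}|Ef(x)|\}.
\end{equation*}
Since $|\cC_K| \leq 4K^2$, the non-significant caps contribute at most $4K^{-\e}|Ef(x)|$ in the triangle inequality, so that $\bigl|\sum_{\tau \in \mathcal{T}_{\mathrm{sig}}} Ef_\tau(x)\bigr| \geq \tfrac12 |Ef(x)|$ for $K$ large.

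Next I ask whether $\mathcal{T}_{\mathrm{sig}}$ contains a $K^\e$-broad subcollection $\mathcal{T}^*$ in the sense of Definition \ref{def-broad}. If it does, then by definition $\Br_{K^\e} Ef(x) \geq \min_{\tau \in \mathcal{T}^*} |Ef_\tau(x)| \geq K^{-2-\e}|Ef(x)|$, so $|Ef(x)| \leq K^{2+\e} \Br_{K^\e} Ef(x) \leq K^3 \Br_{K^\e} Ef(x)$ and the third term of \eqref{broad-narrow} dominates. If it does not, choose a maximal broad subset $\mathcal{T}_0 \subset \mathcal{T}_{\mathrm{sig}}$; then $|\mathcal{T}_0| < K^\e$, and by maximality, every $\tau \in \mathcal{T}_{\mathrm{sig}}$ has either its $\xi_1$- or its $\xi_2$-coordinate within $2/K$ of some element of $\mathcal{T}_0$. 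Consequently, $\mathcal{T}_{\mathrm{sig}}$ is contained in the union of at most $3K^\e$ of the horizontal strips $S_{1,j}$ together with at most $3K^\e$ of the vertical strips $S_{2,k}$; here we assume $\{S_1\},\{S_2\}$ are aligned with the dyadic partition $\cC_K$, which we may arrange by a harmless shift.

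For the final step, let $\mathcal{J}$ and $\mathcal{K}$ index the selected horizontal and vertical strips, and set $A = \bigsqcup_{j \in \mathcal{J}} S_{1,j}$ and $B = \bigsqcup_{k \in \mathcal{K}} S_{2,k}$, so that $\mathcal{T}_{\mathrm{sig}} \subset A \cup B$. Inclusion-exclusion gives
\begin{equation*}
|Ef(x)| \leq |Ef_A(x)| + |Ef_B(x)| + |Ef_{A \cap B}(x)| + |Ef_{(A \cup B)^c}(x)|.
\end{equation*}
The triangle inequality on disjoint unions yields $|Ef_A(x)| \leq 3K^\e \max_{S_j} |Ef_{S_j}(x)|$ and analogously for $|Ef_B(x)|$. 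The set $A \cap B$ is a disjoint union of at most $9K^{2\e}$ dyadic $K^{-1}$-caps, so $|Ef_{A \cap B}(x)| \leq 9K^{2\e} M$. Finally, $(A \cup B)^c$ contains only non-significant caps, hence $|Ef_{(A \cup B)^c}(x)| \leq 4K^{-\e}|Ef(x)|$, which is absorbed into the left side. Assembling the four contributions yields \eqref{broad-narrow}. The only tricky piece of bookkeeping is the overlap term $|Ef_{A \cap B}(x)|$, which produces an extra $K^{2\e}M$; this is comfortably swallowed by the first term of \eqref{broad-narrow} thanks to the generous slack $K^{5\e}$ in its coefficient.
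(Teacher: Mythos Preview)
Your proof is correct and follows essentially the same approach as the paper. Both arguments isolate a collection of significant caps, use a maximal (equivalently, greedy) selection with respect to the $2/K$-separation condition to either produce a $K^\e$-broad subcollection or to cover the significant caps by $O(K^\e)$ horizontal and $O(K^\e)$ vertical strips, and then apply the same inclusion--exclusion on the strip cover. The only difference is the order of the case split: the paper first disposes of the strip case and then argues by contradiction that a broad subcollection must exist, whereas you first test for a broad subcollection and, failing that, land directly in the strip case; these are contrapositives of one another.
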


\begin{proof}
	
	If there exists a square $\tau$ such that $|Ef_\tau(x)|\geq K^{-5\e}|Ef(x)|$, then the first term in \eqref{broad-narrow} dominates $|Ef(x)|$.
	Otherwise, $|Ef_\tau(x)|\leq K^{-5\e}|Ef(x)|$ for all $\tau$.
	
	For $j=1,2$, if there exists an $S_j$ such that $|Ef_{S_j}(x)|\leq K^{-2\e} |E(x)|$, then $|Ef(x)|$ is dominated by the second term of \eqref{broad-narrow}.
	Otherwise, $|Ef_{S_j}(x)|\leq K^{-2\e} |Ef(x)|$ for all $S_j$.
	Let $\cT$ be the family of $K^{-1}$-squares that $|Ef_\tau(x)|\geq K^{-3}|Ef(x)|$, so 
	\begin{equation}
		\nonumber
		\Big|\sum_{\tau\in\cT}Ef_\tau(x)\Big|\geq(1/2)|Ef(x)|.
	\end{equation}
	
	We next prove that $\cT$ cannot  be covered by a family consisting of horizontal strips $\cs_1$ and vertical strips $\cs_2$, such that $\#\cs_1, \#\cs_2\leq 3K^\e$. Indeed, assume for contradiction that such a family exists. We write
	\begin{equation}
		\nonumber
		\sum_{\tau\in\cT}Ef_\tau(x)=\sum_{S_1\in\cs_1}Ef_{S_1}(x)+\sum_{S_2\in\cs_2}Ef_{S_2}(x)-\sum_{\substack{\tau\in\cT:\;\tau\subset S_1\cap S_2\\ \text{for some $S_j\in\cs_j$}}}Ef_\tau(x).
	\end{equation}
	As a result, we have
	\begin{equation}
		\nonumber
		\sum_{S_1\in\cs_1}|Ef_{S_1}(x)|+\sum_{S_2\in\cs_2}|Ef_{S_2}(x)|+\sum_{\substack{\tau\in\cT:\tau\subset S_1\cap S_2\\ \text{for some $S_j\in\cs_j$}}}|Ef_\tau(x)|\geq(1/2)|Ef(x)|.
	\end{equation}
	Since $\#\cs_j\leq 3K^\e$, we must have $\#\{\tau\in\cT:\tau\subset S_1\cap S_2 \text{ for some $S_j\in\cs_j$}\}\le 9K^{2\e}$.
	Also, recall that $\max_{S_j}|Ef_{S_j}(x)|\leq K^{-2\e}|Ef(x)|$, $\max_{\tau}|Ef_{\tau}(x)|\leq K^{-5\e}|Ef(x)|$.
	This contradicts the inequality above.

	Finally, we claim that there exists $\cT(x)\subset\cT$ such that $\cT(x)$ is $K^\e$-broad. Thus, the third term in \eqref{broad-narrow} dominates $|Ef(x)|$.
	We construct $\cT(x)$ inductively. Start with any $\tau_1\in\cT$. Pick $\tau_2\in\cT$ not contained inside any of the three horizontal or the three vertical strips either containing or adjacent to $\tau_1$. Assuming $\tau_1,\ldots,\tau_{n-1}$ have been constructed, pick $\tau_n\in \cT$ not contained in any of the strips containing or adjacent to any of the $\tau_1,\ldots,\tau_{n-1}$. There are at most $3(n-1)$ such horizontal or vertical strips, so this process may continue at least as long as $n\le K^{\e}$. The resulting collection is easily seen to be $K^\e$-broad.
	\qedhere

\end{proof}

\smallskip

\subsection{The estimate for the broad norm}
We start with a combinatorial lemma that will be used repeatedly in this section.
\begin{lemma}[Pigeonholing]
\label{lcomb}Consider a finite collection of numbers $I_Q,\;Q\in\mathcal Q$, with $I_Q\in [L,2L]$. Assume there is a finite set $\Lambda$ and numbers $A\le I_{Q,\lambda}\le B$ such that for each $Q\in\mathcal Q$
$$I_Q\le C\sum_{\lambda\in\Lambda}I_{Q,\lambda}.$$
Then there are $\lambda\in\Lambda$, $L'$, and $\mathcal Q''\subset \mathcal Q$ such that $I_{Q,\lambda}\in [L',2L']$ for each $Q\in\mathcal Q''$,
$$ (\log B/A)^{-1}(\#\Lambda)^{-1}\#\mathcal Q\le\#\mathcal Q'' $$ and
$$(C\log B/A)^{-1}(\#\Lambda)^{-2}\sum_{Q\in \mathcal Q}I_Q\lesssim \sum_{Q\in\mathcal Q''}I_{Q,\lambda}.$$
\end{lemma}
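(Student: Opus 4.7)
The statement is a standard double-pigeonhole extraction: one pigeonhole on the index $\lambda$ to isolate a dominant $\lambda^{*}$ shared by many $Q$, followed by a dyadic pigeonhole on the value $I_{Q,\lambda^{*}}$ to collapse the range $[A,B]$ to a single dyadic scale $[L',2L']$. The factor $(\#\Lambda)^{-2}$ in the weighted conclusion comes precisely from this double loss, and the $\log(B/A)^{-1}$ from the dyadic step.

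Concretely, for each $Q \in \mathcal{Q}$ I would select a maximizer $\lambda(Q) \in \Lambda$ of $\lambda \mapsto I_{Q,\lambda}$. The hypothesis $I_Q \le C\sum_{\lambda}I_{Q,\lambda} \le C\,\#\Lambda\cdot I_{Q,\lambda(Q)}$ gives
\[
I_{Q,\lambda(Q)} \;\ge\; \frac{I_Q}{C\,\#\Lambda} \;\ge\; \frac{L}{C\,\#\Lambda}.
\]
Pigeonholing on the value of $\lambda(Q)$ produces $\lambda^{*} \in \Lambda$ and $\mathcal{Q}' \subset \mathcal{Q}$ with $\#\mathcal{Q}' \ge \#\mathcal{Q}/\#\Lambda$ on which $\lambda(Q) = \lambda^{*}$, hence $I_{Q,\lambda^{*}} \in [L/(C\#\Lambda),\,B]$ for every $Q \in \mathcal{Q}'$. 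Next I would partition $[L/(C\#\Lambda),\,B]$ into $O(\log(B/A))$ dyadic subintervals of the form $[L',2L']$ and pigeonhole once more to extract $L'$ and $\mathcal{Q}'' \subset \mathcal{Q}'$ with $I_{Q,\lambda^{*}} \in [L',2L']$ on $\mathcal{Q}''$ and
\[
\#\mathcal{Q}'' \;\gtrsim\; \frac{\#\mathcal{Q}'}{\log(B/A)} \;\gtrsim\; \frac{\#\mathcal{Q}}{\#\Lambda\,\log(B/A)},
\]
which is the first stated conclusion. For the second, note that by construction $L' \ge L/(2C\,\#\Lambda)$ and $\sum_{Q\in\mathcal{Q}}I_Q \le 2L\,\#\mathcal{Q}$, so
\[
\sum_{Q\in\mathcal{Q}''} I_{Q,\lambda^{*}} \;\ge\; L'\,\#\mathcal{Q}'' \;\gtrsim\; \frac{L\,\#\mathcal{Q}}{C\,(\#\Lambda)^{2}\,\log(B/A)} \;\gtrsim\; \frac{1}{C\,(\#\Lambda)^{2}\,\log(B/A)}\sum_{Q\in\mathcal{Q}} I_Q.
\]

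There is no substantive obstacle — the argument is routine. The single point worth flagging is that the \emph{sharper} lower bound $L' \ge L/(2C\#\Lambda)$, rather than the trivial $L' \ge A$, is what converts the cardinality bound on $\#\mathcal{Q}''$ into the weighted sum bound; tracking this constant through both pigeonhole steps is precisely what produces the square $(\#\Lambda)^{2}$ in the stated inequality.
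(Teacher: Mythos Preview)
Your argument is correct and follows essentially the same route as the paper's proof: pick for each $Q$ a $\lambda_Q$ with $I_{Q,\lambda_Q}\ge (C\#\Lambda)^{-1}I_Q$, pigeonhole on the value of $\lambda_Q$ to get $\mathcal Q'$, then dyadically pigeonhole on $I_{Q,\lambda}$ over $[A,B]$ to get $\mathcal Q''$. Your write-up is in fact slightly more explicit than the paper's, which stops after establishing the cardinality bound and leaves the weighted-sum conclusion implicit; your observation that the sharper lower bound $L'\gtrsim L/(C\#\Lambda)$ (rather than merely $L'\ge A$) is what drives the $(\#\Lambda)^{-2}$ factor is exactly the point.
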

\begin{proof}
For each $Q$, pick $\lambda_Q\in\Lambda$ such that $I_{Q,\lambda_Q}\ge (C\# \Lambda)^{-1}I_Q.$ Then pick a collection $\mathcal Q'\subset\mathcal Q$ such that $\#\mathcal Q'\ge (\#\Lambda)^{-1}\#\mathcal Q$ and $\lambda_Q$ is the same for $Q\in\mathcal Q'$. Call $\lambda$ the common value. 

Finally, pick $\mathcal Q''\subset \mathcal Q'$ such that $\#\mathcal Q''\ge (\log B/A)^{-1}\#\mathcal Q'$, and moreover, there is $L'$ such that  $I_{Q,\lambda}\in [L',2L']$ for each $Q\in\mathcal Q''$.
\qedhere
\end{proof}

\medskip

Given $f:[-1,1]^2\to \C$, we write
$$f=\sum_{\theta\in\Theta}f_\theta.$$
We prove our main result about the broad norm. 
\begin{proposition}
	\label{broad-prop}
	Assume $\e\ll 10^{-3}$ is small enough.
	For $R\gg 1$, let $K=R^{\e^{10}}$.
	Then there exists $C_\e>0$ such that for all $R\geq1$,
	\begin{equation}
		\label{mixed-norm}
		\int_{B_R}|\Br_{A} Ef|^p\leq C_\e R^{2\e}\|f\|_2^2\;\sup_{\theta:R^{-1/2}\text{-square}}\|f_\theta\|_{L^2_{avg}(\theta)}^{p-2}
	\end{equation}
	for $p=22/7$ and $A\geq R^{\e^{20}}$.
	Here $\|f_\theta\|_{L^2_{avg}(\theta)}$ is defined as
	\begin{equation}
		\|f_\theta\|_{L^2_{avg}(\theta)}^2:=|\theta|^{-1}\|f_\theta\|_2^2.
	\end{equation}
\end{proposition}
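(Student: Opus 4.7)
My plan is to run a dyadic superlevel-set argument: rewrite $\int_{B_R}|\Br_A Ef|^p$ as $\lambda^p|X|$ on a pigeonholed level set $X$ composed of $R^{1/2}$-balls, then bound $|X|$ by combining the bilinear refined decoupling (Theorem \ref{refined-dec-thm}) with a Kakeya-type multiplicity estimate (Proposition \ref{kakeya-prop} unconditionally, Conjecture \ref{two-ends-furstenberg-conj} conditionally). Concretely, I would first wave-packet-decompose $Ef=\sum_T Ef_T$ and dyadic-pigeonhole so that $|Ef_T|\sim \alpha$ on a shading $Y(T)\subset T$ of common measure $\sigma R^2$, with a uniform number of surviving tubes in each cap $\tau\in\cC_K$; enforce the $(\e_1,\e_2)$-two-ends property on $Y(T)$ by a standard iteration. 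After normalizing $\sup_\theta\|f_\theta\|_{L^2_{avg}(\theta)}=1$, the target reduces to $\lambda^p|X|\lesssim R^{2\e}\|f\|_2^2$.

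Next, I would use the $A$-broadness on each $Q\subset X$ to produce a $K^{-1}$-transverse pair $(\tau_1(Q),\tau_2(Q))\in\cC_K\times\cC_K$ with $|Ef_{\tau_j}|\gtrsim \lambda$ on $Q$. Since there are only $K^{O(1)}=R^{O(\e^{10})}$ candidate pairs, a further pigeonhole fixes a single pair $(\tau_1,\tau_2)$ serving most of $X$. Then Cauchy--Schwarz combined with Theorem \ref{refined-dec-thm} (after parabolic rescaling of the $K^{-1}$-transverse pair to a genuinely transverse pair at rescaled radius $R/K^2$) yields
$$\lambda^4|X|\lesssim \int_X|Ef_{\tau_1}Ef_{\tau_2}|^2 \lesssim R^\e (M_1M_2)^{1/2}\prod_{j=1}^2\Big(\sum_{T\in\bar\ZT(\tau_j)}\|Ef_T\|_4^4\Big)^{1/2},$$
where $M_j$ is the common max multiplicity of $\bar\ZT(\tau_j)$-tubes through $R^{1/2}$-balls of $X$, fixed by another pigeonhole.

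The Kakeya input enters through $M_j$: by Lemma \ref{wpt}(4) the surviving tubes in $\bar\ZT(\tau_j)$ correspond to $R^{-1/2}$-separated lines carrying the two-ends shading $\{Q\subset X:Q\cap Y(T)\ne\emptyset\}$ of some pigeonholed density $\la_j$. Proposition \ref{kakeya-prop} with $m=1$, $\delta=R^{-1/2}$ then bounds $M_j\lessapprox R^{\e_1}R^{1/4}\la_j^{-3/4}$; conditionally, Conjecture \ref{two-ends-furstenberg-conj} upgrades this to $M_j\lessapprox \la_j^{-1}$ after pigeonholing off the high-multiplicity set. Using Lemma \ref{wpt}(6) to dominate $\|Ef_T\|_4^4$ by $R^{-2}\|Ef_T\|_2^4$, together with near-orthogonality $\sum_T\|Ef_T\|_2^2\lesssim R\|f\|_2^2$ and Lemma \ref{lem: l2}, all wave packet quantities can be re-expressed in terms of $\|f\|_2^2$ and the pigeonholed parameters $(\alpha,\sigma,\la_j,\#\bar\ZT(\tau_j))$.

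The main obstacle is the exponent bookkeeping: back-substitution produces a single algebraic inequality in these parameters whose critical exponent is $p=22/7$ when $M_j\lesssim R^{1/4}\la_j^{-3/4}$, and $p=3$ when $M_j\lesssim \la_j^{-1}$. Making the balance fit also requires each pigeonhole loss of order $(\log R)^{O(1)}$, $R^{O(\e_0)}$, or $K^{O(1)}$ to sit inside $R^{2\e}$; this dictates the choices $K=R^{\e^{10}}$ and $\e_0=\e^{1000}$ in \eqref{fiourf8ur8gut8gu8tu}. The most delicate step to execute cleanly is enforcing the two-ends condition on $Y(T)$ simultaneously with the uniform pigeonholings (multiplicities, cap populations, wave packet sizes), in an order that does not destroy the earlier reductions.
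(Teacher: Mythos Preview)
Your overall architecture matches the paper's: pigeonhole wave packets, pass to a transverse pair via broadness, feed the multiplicity bound from Proposition~\ref{kakeya-prop} into the bilinear refined decoupling (Theorem~\ref{refined-dec-thm}), and interpolate the resulting $L^4$ estimate against the $L^2$ bound of Lemma~\ref{lem: l2} to land at $p=22/7$. The exponent bookkeeping you sketch is essentially correct, and the losses of size $K^{O(1)}$, $(\log R)^{O(1)}$, $R^{O(\e_0)}$ are indeed absorbed by $R^{2\e}$.

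The genuine gap is the line ``enforce the $(\e_1,\e_2)$-two-ends property on $Y(T)$ by a standard iteration.'' There is no free-standing reduction that makes every shading two-ends without paying for the concentrated part, and in this problem that payment is exactly what forces an \emph{induction on scales}. The paper does not run a single level-set argument at scale $R$; it proves the stronger statement \eqref{mixed-norm-2} for all $r\in[R^{\e^2},R]$ by induction on $r$. At each stage it partitions tubes into $r^{1-\e^2}$-segments, pigeonholes the segment count $\beta$, and splits into two cases. When $\beta\ge r^{\e^4}$ the shading is genuinely two-ends and your argument (Kakeya $+$ refined decoupling $+$ $L^2$) goes through. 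When $\beta\le r^{\e^4}$ the shading is concentrated in $\lesssim r^{\e^4}$ short segments, so each tube contributes to only $\lesssim r^{\e^4}$ balls $B_k$ of radius $r^{1-\e^2}$; the paper then applies the \emph{induction hypothesis} on each $B_k$ and sums, gaining $r^{-\e^3+\e^4}$ to close the induction. Your proposal has no mechanism for this non-two-ends branch, and without it the argument is incomplete.

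A secondary point: you set $m=1$ in Proposition~\ref{kakeya-prop}, but after pigeonholing there are $\sim m$ tubes per direction $\theta$, not one. The paper carries this $m$ into $\mu=r^{2\e^2}m(\lambda\beta)^{-3/4}r^{1/4}$ and then cancels it against the $m^{-1}$ appearing in \eqref{after-decoupling} (which comes from the uniformity $\|f_T\|_2\sim\gamma\|f\|_2$). This is easy to fix once noticed, but as written your multiplicity bound is off by a factor of $m$.
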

\begin{proof}
	Throughout this argument, we let $p=22/7$. Fix $\e$.
	We use induction on  $r$ to prove that for all $r\in[R^{\e^2},R]$ and $A\geq r^{\e^{20}}$,
	\begin{equation}
		\label{mixed-norm-2}
		\int_{B_r}|\Br_{A} Ef|^p\leq C_\e R^\e r^\e\|f\|_2^2\sup_{\theta:r^{-1/2}\text{-square}}\|f_\theta\|_{L^2_{avg}(\theta)}^{p-2}.
	\end{equation}
	The base case is when $r=R^{\e^2}$, which is trivial via the use of elementary inequalities,  as $R^{\e}=r^{\e^{-1}}\geq r^{100}$. 
	We will see that the number $n$ of steps in this iteration is $\sim \log\e/\log(1-\e)=O_\e(1)$. Indeed, the sequence of radii is $$R^{\e^2},\;R^{\e^2/(1-\e^2)},\ldots, R^{\e^2/(1-\e^2)^n}\sim R.$$
	
	\smallskip

    Assume \eqref{mixed-norm-2} holds for $r=R^{\e^2/(1-\e^2)^{m-1}}$, $m\ge 1$. Fix $r=R^{\e^2/(1-\e^2)^m}$ and fix $B_r$. Partition the $r$-tubes $\bar\ZT=\ZT\sqcup\ZT_{small}$, where 
    $\ZT_{small}=\{T\in\bar\ZT:\|f_T\|_2\leq r^{-100}\|f\|_2\}$.
	An easy computation shows that $ \int_{B_r}|\Br_{A} E(\sum_{T\in\ZT_{small}}f_T)|^p\lesssim r^{-10}\|f\|_2^p$, which trivially yields \eqref{mixed-norm}. It remains to estimate $\int_{B_r}|\Br_{A} Ef'|^p,$
    where $f'=\sum_{T\in\ZT}f_T$.
    Next, partition $\ZT=\bigsqcup_{\ga,m}\ZT_{\ga,m}$, where $\ga,m\in[r^{-100},r^{10}]$ are dyadic numbers, such that 
	\begin{itemize}
		\item For all $T\in\ZT_{\ga,m}$, $\|f_T\|_2\sim \ga\|f\|_2$\;.
		\item For all $\theta$, either $\ZT_{\ga,m}(\theta)=\varnothing$, or $\#\ZT_{\ga,m}(\theta)\sim m$.
	\end{itemize}
	Since there are $O((\log r)^2)$ possible pairs of dyadic numbers $(\ga,m)$, by the triangle inequality \eqref{broad-triangle}, there exists a pair  $(\ga,m)$ and  $A_g\gtrapprox A$ such that, writing $g=\sum_{T\in\ZT_{\ga,m}}f_T$, we have 
	\begin{equation}
		\label{jfdjiofgjbiogjbigji}
		\int_{B_r}|\Br_{A} Ef'|^p\lessapprox \int_{B_r}|\Br_{A_g} Eg|^p.
	\end{equation}
	To ease notation, we let $\ZT_g=\ZT_{\ga,m}$.
    By dyadic pigeonholing, there exists a union $X$ of $r^{1/2}$-balls $Q$ such that 
	\begin{itemize}
		\item The values  $\int_{Q}|\Br_{A_g} Eg|^p$ are about the same for all $Q\subset X$.
		\item We have
		\begin{equation}
			\label{reduction-1}
			\int_{B_r}|\Br_{A_g} Eg|^p\lessapprox\int_{X}|\Br_{A_g} Eg|^p.
		\end{equation}
	\end{itemize}

	\medskip
	
	\noindent{\bf Step 1: Two-ends reduction.}
	
	Partition each $r$-tube $T\in\ZT_g$ into  tube segments $J$ of length $r^{1-\e^2}$. Let $\cj(T)$ be those segments that intersect $X$. 
	Then, partition  $\cj(T)=\bigcup_\la\cj_\la(T)$, $\la\in\Lambda$, where $\Lambda$ denotes the dyadic numbers in $[r^{-1/2},r^{-\e^2}]$, and $|J\cap X|\sim \la |T|$ for any $J\in\cj_\la(T)$. 
	Thus, 
	\begin{equation}
		\nonumber
		Eg=\sum_{\la}\sum_{T\in\ZT_g}Ef_{T}\sum_{J\in\cj_\la(T)}\Id_J.    
	\end{equation}
    Write $F_\lambda^\tau=\sum_{T\in\ZT_g\atop{\theta_T\subset\tau}}Ef_{T}\sum_{J\in\cj_\la(T)}\Id_J$, $\tau\in\cC_K$. Note that $Eg_\tau=\sum_{\lambda}F_\lambda^{\tau}$. The triangle  inequality \eqref{broad-triangle} together with the triangle inequality in $L^p$ followed by H\"older imply that, for some $A_1\gtrapprox A_g$, we have for each $Q\subset X$
    $$\int_{Q}|\Br_{A_g} Eg|^p\le \#(\Lambda)^{p-1}\sum_{\lambda}\int_{Q}|\Br_{A_1} \{F_\lambda^{\tau}\}|^p.$$

    We may assume all nonzero terms $I_{Q,\lambda}:=\int_{Q}|\Br_{A_1} \{F_\lambda^{\tau}\}|^p$ are in the interval $[r^{-100}(\gamma\|f\|_2)^p,r^{100}(\gamma\|f\|_2)^p]$.    
	Since $I_Q:=\int_{Q}|\Br_{A_g} Eg|^p$ are about the same for all $Q\subset X$, and since $\#\Lambda\lessapprox 1$, by Lemma \ref{lcomb} there is a $\la\in\Lambda$ and a set of $r^{1/2}$-balls $X_1\subset X$ such that 
	\begin{itemize}
		\item $|X_1|\gtrapprox|X|$.
		\item For each $r^{1/2}$-ball $Q\subset X_1$, 
        $\int_{Q}|\Br_{A_1} \{F_\lambda^{\tau}\}|^p
        $ has about the same value.
		\item We have
		\begin{equation}			
			\int_{X}|\Br_{A_g} Eg|^p\lessapprox\int_{X_1}|\Br_{A_1} \{F_\lambda^{\tau}\}|^p.
		\end{equation}
	\end{itemize}
	Consider the partition $\ZT_g=\bigcup_\be\ZT_{\be}$, where $\be\in[1,r^{\e^2}]$ is a dyadic number and $\#\cj_\la(T)\sim\be$ for all $T\in\ZT_{\be}$. 
	As a result, 
	\begin{equation}
		\nonumber
		\sum_{T\in\ZT_g}\sum_{J\in\cj_\la(T)}Ef_{T}\Id_J=\sum_\be\sum_{T\in\ZT_{\be}}\sum_{J\in\cj_\la(T)}Ef_{T}\Id_J.    
	\end{equation}Write $F_{\lambda,\beta}^\tau=\sum_{T\in\ZT_\beta\atop{\theta_T\subset\tau}}Ef_{T}\sum_{J\in\cj_\la(T)}\Id_J$, $\tau\in\cC_K$. Note that $F_\lambda^\tau=\sum_{\beta}F_{\lambda,\beta}^{\tau}$.
	Reasoning as in the previous step, using the triangle  inequality \eqref{broad-triangle} and  Lemma \ref{lcomb}, 
     we find  $\be$, $A_2\gtrapprox A_1$ and a set of $r^{1/2}$-balls $X_2$ such that 
	\begin{itemize}
		\item $|X_2|\gtrapprox|X_1|$.
		\item For each $r^{1/2}$-ball $Q\subset X_2$, $\int_{Q}|\Br_{A_2} \{F_{\lambda,\beta}^{\tau}\}|^p$ has about the same value.
		\item We have
		\begin{equation}
			\label{reduction-2}
			\int_{X_1}|\Br_{A_1} \{F_\lambda^{\tau}\}|^p\lessapprox\int_{X_2}|\Br_{A_2} \{F_{\lambda,\beta}^{\tau}\}|^p .
		\end{equation}
	\end{itemize}
	
	\smallskip
    
    It remains to analyze the last integral. We will distinguish two cases.
	Let $\{B_k\}$ be a finitely overlapping family of $r^{1-\e^2}$-balls that cover $B_r$. 
	\medskip
	
	\noindent{\bf Step 2: The non-two-ends scenario.} Assume $\be\leq r^{\e^4}$.
	
	For each $B_k$, define 
	\begin{equation}
		\nonumber
		g_{k}=\sum_{\substack{T\in\ZT_{\be} \text{ such that}\\ \exists J\in\cj_\la(T),\, J\cap B_k\not=\varnothing}} f_{T}.
	\end{equation}
    Note that on each $B_k$, by Lemma \ref{wpt}
    $$\big|\sum_{T\in\ZT_{\be}}Ef_T\sum_{J\in\cj_{\la}(T)}\Id_J\big|\sim |Eg_k|.$$
	Thus, we have
	\begin{align}
		\label{related}
		\int_{X_2\cap B_k}\big|\Br_{A_2}\{F_{\lambda,\beta}^\tau\}\big|^p&\sim\int_{X_2\cap B_k}\big|\Br_{A_2} Eg_k\big|^p.
	\end{align}
	Note that for each $T$, there are $\lesssim r^{\e^{4}}$ many $B_k$ such that $\exists J\in\cj_\la(T), J\cap B_k\not=\varnothing$.
	As a consequence, 
	\begin{equation}
		\label{l2-related}
		\sum_{k}\|g_k\|_2^2\lesssim r^{\e^{4}}\|g\|_2^2\lesssim r^{\e^4}\|f\|_2^2. 
	\end{equation}
	Since $A_2\gtrapprox A\geq r^{\e^{20}}$, we have (for $\e$ small enough) $A_2\geq r^{(1-\e^2)\e^{20}}$.
	Apply \eqref{mixed-norm-2} as an induction hypothesis on each $r^{1-\e^2}=R^{\e^2/(1-\e^2)^{m-1}}$-ball $B_k$ to get
	\begin{equation}
		\label{vpofigtug8utr8gurtg98u}
		\|\Br_{A_2} Eg_k\|_{L^p(B_k)}^p\leq C_\e R^\e r^{(1-\e^2)\e}\|g_k\|_2^2\;\sup_{\om}\|g_{k,\om}\|_{L_{avg}^2(\om)}^{p-2},
	\end{equation}
	where the sup is over  $ r^{(\e^2-1)/2}$-squares $\om$.
	By $L^2$-orthogonality,  
    \begin{equation}
    \label{rpojirutgut8}
    \sup_{\om}\|g_{k,\om}\|_{L^2_{avg}(\om)}^{p-2}\lesssim\sup_{\theta}\|f_{\theta}\|_{L^2_{avg}(\theta)}^{p-2}.
    \end{equation}
	Summing up over $B_k$, using \eqref{jfdjiofgjbiogjbigji}-\eqref{rpojirutgut8},  we find
	\begin{align}
		\nonumber
		\int_{B_r}|\Br_{A} Ef'|^p&\lessapprox \, \sum_{k}C_\e R^\e r^{(1-\e^2)\e} \|g_k\|_2^2\sup_{\om}\|g_{k,\om}\|_{L^2_{avg}(\om)}^{p-2}\\ \nonumber
		&\lesssim  r^{-\e^3+\e^4}C_\e R^\e r^{\e}\|f\|_2^2\sup_{\theta}\|f_{\theta}\|_{L^2_{avg}(\theta)}^{p-2}.
	\end{align}
	This proves \eqref{mixed-norm-2}. Note that  we  have not used yet either the information (gained via pigeonholing) regarding the subsets $X_i$ or the constant property relative to $Q$. These will be used in the next step, more precisely, in the derivation of \eqref{bdbhdbvhhvbfdhvbhfb}.  
	
	\medskip
	
	\noindent{\bf Step 3: The two-ends case.} Assume $\be\ge r^{\e^4}$.
    
	For each $T\in\ZT_{\be}$, consider the shading $Y(T)=\bigcup_{J\in\cj_\la(T)}(J\cap X)$.
	Then $Y$ is a rescaled $(\e^2, \e^{4})$-two-ends, $\la\be$-dense shading.

	\smallskip

	Define
	\begin{equation}
		\nonumber
		\mu=r^{2\e^{2}}m(\la\be)^{-3/4}r^{1/4}.
	\end{equation}
	We  apply Proposition \ref{kakeya-prop} to the $r^{-1}$-dilate of $(\ZT_\be, Y)$ (with $\delta=r^{-1/2}$) to obtain a set $X_3\subset X$ with
	 $|X\setminus X_3|\leq r^{-\e^2}|X|$ and ( recall $\e_0$ from \eqref{fiourf8ur8gut8gu8tu})
    \begin{equation}
		\label{multi-R-half}
		\sup_{Q\subset X_3}\#\{T\in\ZT_\be:Y(T)\cap Q\not=\varnothing\}\lessapprox r^{O(\e_0)}\mu.
	\end{equation}
	
	Since $|X_2|\gtrapprox |X|$, we know that  $|X_2\setminus X_3|\lessapprox r^{-\e^2}|X_2|$. 
	Denote by $X_4=X_2\cap X_3$, so we have $|X_4|\gtrapprox|X_2|$ and $X_4\subset X_2$. 
	Recall that $\int_{Q}|\Br_{A_2} \{F_{\lambda,\beta}^{\tau}\}|^p$
     are about the same for $Q\subset X_2$.
	Thus,  we have 
	\begin{equation}
		\label{bdbhdbvhhvbfdhvbhfb}\int_{X_2}|\Br_{A_2} \{F_{\lambda,\beta}^{\tau}\}|^p
		\lessapprox \int_{X_4}|\Br_{A_2} \{F_{\lambda,\beta}^{\tau}\}|^p.
	\end{equation}
    The change of the domain of integration from $X_2$ to $X_4$ is crucial, as it will give us access to the incidence estimate \eqref{multi-R-half}.
    
	Assuming $\e$ is small enough, we have $A_2\ge 2$, as $A_2\gtrapprox A\ge r^{\e^{20}}$.
	We invoke \eqref{broad-bilinear} and pigeonholing to find two $K^{-1}$-transverse $\tau_1,\tau_2\in\cC_K$ so that, denoting $\ZT_{\be}[\tau_j]=\bigcup_{\theta\subset\tau_j}\ZT_{\be}(\theta)$, we have
	\begin{align}
		\nonumber
        \int_{X_4}|\Br_{A_2} \{F_{\lambda,\beta}^{\tau}\}|^p
		\lesssim K^{O(1)}\int_{ X_4}\prod_{j=1,2}\Big|\sum_{T\in\ZT_{\be}[\tau_j]}\sum_{J\in\cj_{\la}(T)}Ef_{T}(x)\Id_J\Big|^{p/2}.
	\end{align}
	Recall that $\{B_k\}$ is a partition of $B_r$ into $r^{1-\e^2}$-balls. 
	For each $B_k$, let $$\ZT_{\be, k}[\tau_j]=\{T\in\ZT_{\be}[\tau_j]: \exists \;J\in\cj_{\la}(T),\; J\cap B_k\not=\varnothing\}.$$
	Therefore, using earlier inequalities we find
	\begin{align}
		\label{X-4}
		\int_{B_r}|\Br_{A} Ef'|^p&\lessapprox K^{O(1)} \sum_k\int_{X_4\cap B_k}\prod_{j=1,2}\Big|\sum_{T\in\ZT_{\be}[\tau_j]}\sum_{J\in\cj_{\la}(T)}Ef_{T}(x)\Id_J\Big|^{p/2}\\ \nonumber
		&\sim K^{O(1)}\sum_k\int_{X_4\cap B_k}\prod_{j=1,2}\Big|\sum_{T\in\ZT_{\be, k}[\tau_j]}Ef_{T}\Big|^{p/2}\\ 
        \nonumber
		&\lesssim  K^{O(1)} r^{10\e^2}\max_k\int_{X_4\cap B_k}\prod_{j=1,2}\Big|\sum_{T\in\ZT_{\be, k}[\tau_j]}Ef_{T}\Big|^{p/2}.
	\end{align}
    We derive two estimates for the right-hand side. 
    
	Notice that for each $Q\subset B_k$,
	\begin{equation}
		\nonumber
		\{T\in\ZT_{\be}:Y(T)\cap Q\not=\varnothing\}=\{T\in\ZT_{\be,k}:T\cap Q\not=\varnothing\}.
	\end{equation}
    When combined with \eqref{multi-R-half}, this shows that when $Q\subset X_4\cap B_k$
    \begin{equation}
    \label{multi-R-half4}
    \#\{T\in\ZT_{\be, k}:T\cap Q\not=\varnothing\}\lessapprox r^{O(\e_0)}\mu.
    \end{equation}
	At this point, we invoke Theorem \ref{refined-dec-thm} at scale $r$, using the set $X_4\cap B_k\subset B_r$ and  the bound \eqref{multi-R-half4}  to have
	\begin{equation}
		\label{jkijurgu98tug8tug89t}
		\int_{X_4\cap B_k}\prod_{j=1,2}\Big|\sum_{T\in\ZT_{\be, k}[\tau_j]}Ef_{T}\Big|^2\lessapprox K^{O(1)} r^{O(\e_0)} \mu\sum_{T\in\ZT_g}\big\|Ef_T\big\|_{L^4(w_{B_r})}^4.
	\end{equation}
	
	Recall that $\|f_T\|_{2}$ have comparable magnitude for all $T\in\ZT_g$, and that $\#\ZT_g(\theta)\sim m$ for all $\theta$ such that $\ZT_g(\theta)\not=\varnothing$.
	Thus, for each $\theta'$ we have
	\begin{align}
		\nonumber
		\sum_{T\in\ZT_g(\theta')}&\big\|Ef_T\big\|_{L^4(w_{B_r})}^4\lesssim r^{-2}\sum_{T\in\ZT_g(\theta')}\big\|Ef_T\big\|_{L^2(w_{B_r})}^4\lesssim\sum_{T\in\ZT_g(\theta')}\|f_T\|_2^4\\
		\label{after-decoupling}
		&\lesssim m^{-1} \Big(\sum_{T\in\ZT_g(\theta')}\big\|f_T\big\|_{2}^2\Big)^2\lesssim  (mr)^{-1}\|f_{\theta'}\|_2^2\;\sup_\theta\|f_\theta\|_{L^2_{avg}(\theta)}^2.
	\end{align} 
	We recall that $O(\e_0)\leq\e^2$, $K=R^{\e^{10}}\leq r^{\e^8}$, and $\be\ge 1$.
	Thus, summing up over all $\theta'$ in \eqref{after-decoupling} and plugging it back into \eqref{X-4},\eqref{jkijurgu98tug8tug89t}, we have
	\begin{align}
		\label{l4}
		\int_{X_4\cap B_k}\prod_{j=1,2}\Big|\sum_{T\in\ZT_{\be, k}[\tau_j]}Ef_{T}\Big|^2&\lessapprox r^{O(\e^2)}\mu (m r)^{-1}\|f\|_2^2\sup_\theta\|f_\theta\|_{L^2_{avg(\theta)}}^2\\  \nonumber
		&\lessapprox r^{O(\e^{2})}(\la r)^{-3/4}\|f\|_2^2\;\sup_\theta\|f_\theta\|_{L^2_{avg(\theta)}}^2.
	\end{align}
    This gives us a first estimate.

	Since $|T\cap ( X\cap B_k)|\lesssim \la  |T|$ for all $T\in\ZT_{\beta,k}$, by Cauchy-Schwarz and by Lemma~\ref{lem: l2} we get a second estimate  
	\begin{equation}
		\label{l2-1}
		\int_{X_4\cap B_k}\prod_{j=1,2}\Big|\sum_{T\in\ZT_{\be, k}[\tau_j]}Ef_{T}\Big|\lesssim (\la  r)\|f\|_2^2.
	\end{equation}
	Therefore, since $K=r^{O(\e^2)}$, by \eqref{X-4}, $\eqref{l4}^{4/7}\cdot\eqref{l2-1}^{3/7}$ gives when $p=22/7$,
	\begin{align}
		\nonumber
		\int_{B_r}|\Br_{A} Ef'|^p\lessapprox r^{O(\e^2)} \|f\|_2^2\;\sup_\theta\|f_\theta\|_{L^2_{avg(\theta)}}^{p-2}\leq C_\e R^{\e}r^\e\|f\|_2^2\;\sup_\theta\|f_\theta\|_{L^2_{avg(\theta)}}^{p-2}.
	\end{align}
	This proves \eqref{mixed-norm-2} and hence Proposition \ref{broad-prop}. \qedhere

\end{proof}

\medskip 

\subsection{Proof of Theorem \ref{restriction-thm-local}}
Finally, let us prove Theorem \ref{restriction-thm-local} using Proposition \ref{broad-prop} and a standard induction on scales.

\begin{proof}[Proof of Theorem \ref{restriction-thm-local}]
	Clearly, we may assume $\e$ is small enough.
	We will prove Theorem \ref{restriction-thm-local} by induction on $R$.
	Let $\vp$ be a Schwartz function on $\ZR^2$ that equals to $1$ on $B^2(0,R^2)$ and decays rapidly outside the ball.
	Take $g=f\ast\wh\vp$.
	Then $|Ef(x)-Eg(x)|\leq R^{-1000} \|f\|_2$ when $x\in B_R$.
	Thus, 
	\begin{equation}
		\nonumber
		\Big|\int_{B_R}|Ef|^p-\int_{B_R}|Eg|^p\Big|\lesssim R^{-1000}\|f\|_2^p.
	\end{equation}
	Note that $\|g\|_\infty \leq R^{10}\|f\|_2$.
	For a dyadic number $\mu\in[R^{-10}, R^{10}]$, let $E_\mu$ be the level set $\{|g|\sim\mu\|g\|_2\}$ and let $E_l=\{|g|\leq R^{-10}\|g\|_2\}$ be the lower level set.
	Define $g_\mu=g\Id_{E_\mu}$ and $g_l=g\Id_{E_l}$.
	Thus,
	\begin{equation}
		\nonumber
		\int_{B_R}|Eg|^p\leq\int_{B_R}|Eg_l|^p+\sum_\mu\int_{B_R}|Eg_\mu|^p.
	\end{equation}
	
	If $\int_{B_R}|Eg|^p\lesssim\int_{B_R}|Eg_l|^p$, then \eqref{restriction-esti-local} is true by the trivial estimate $\int_{B_R}|Eg_l|^p\lesssim R^3\|g_l\|_\infty^p\lesssim R^{-20}\|g\|_2^p\lesssim R^{-20}\|f\|_p^p$.
	Otherwise, by pigeonholing, there exists a $\mu$ such that, by relabeling $h=g_\mu$, we have
	\begin{equation}
		\nonumber
		\int_{B_R}|Eg|^p\lessapprox\int_{B_R}|Eg_\mu|^p=\int_{B_R}|Eh|^p.
	\end{equation}

	\smallskip

	Take $K=R^{\e^{20}}$.
	By \eqref{broad-narrow},
	\begin{align}
		\label{broad-narrow-application}
		\int_{B_R}|Eh|^p\leq K^{5\e}\sum_\tau\int|Eh_{\tau}|+ K^{2\e} \sum_{S_j}\int_{B_R}|Eh_{S_j}|^p+K^{3}\int_{B_R}|\Br_{K^\e} Eh|^p.
	\end{align}
	
	\medskip
	
	Suppose the third term of \eqref{broad-narrow} dominates $\int_{B_R}|Eh|^p$.
	Applying Proposition \ref{broad-prop} with $\e$ replaced by  $\e^2$ and noting that $\|h\|_2^2\|h\|_\infty^{p-2}\sim\|h\|_p^p\leq \|g\|_p^p$, we have
	\begin{align}
		\nonumber
		\int_{B_R}|Eh|^p&\leq K^3 C_\e R^{2\e^2}\|h\|_2^2\sup_{\theta:R^{-1/2}\text{-square}}\|h_\theta\|_{L^2_{avg}(\theta)}^{p-2}\\ \nonumber
		&\lesssim  K^3 C_\e R^{2\e^2}\|h\|_2^2\cdot\|h\|_\infty^{p-2}\lesssim C_\e R^{\e}(K^6R^{2\e^2-\e})\|g\|_p^p.
	\end{align}
	This concludes \eqref{restriction-esti-local} since $K=R^{\e^{20}}$ and since $\|g\|_p\lesssim\|f\|_p$.
	
	\medskip
	
	Suppose the second term in \eqref{broad-narrow-application} dominates.
	Consider each $S_j$ and $\int_{B_R}|Eh_{S_j}|^p$.
	By a suitable affine transformation, we may assume $S_j$ is contained in the horizontal strip $S=\{(\xi_1,\xi_2):|\xi_1|\leq K^{-1}, |\xi_2|\sim1\}$.
	Thus,
	\begin{equation}
		\nonumber
		\int_{B_R}|Eh_{S_j}|^p=\int_{B_R}\Big|\int e^{i(x_1\xi_1+x_2\xi_2+x_3\xi_1\xi_2)}h(\xi)\Id_{S}d\xi_1d\xi_2\Big|^pdx_1dx_2dx_3.
	\end{equation}
	Let $\bar h(\xi_1,\xi_2)=h(K\xi_1,\xi_2)$ and $\Box=\{(x_1,x_2,x_3):|x_1|, |x_3|\leq RK^{-1}, |x_2|\leq R\}$.
	Via the change of variables $\xi_1\to K^{-1}\xi_1$ and $x_1\to Kx_1$, $x_3\to Kx_3$, we have
	\begin{equation}
		\label{rescaling}
		\int_{B_R}|Eh_{S_j}|^p\leq K^{2-p}\int_{\Box}|E\bar h|^p.
	\end{equation}
	Partition $\Box$ into finite-overlapping $RK^{-1}$-balls $\{B_j\}$.
	For each $B_j$, let $\bar h_j$ be the sum of scale $RK^{-1}$ wave packets associated with tubes intersecting $B_j$, so that
	\begin{equation}
		\nonumber
		\int_{B_j}|E\bar h|^p\lesssim\int_{B_j}|E\bar h_j|^p+R^{-1000}\|h\|_2^p.
	\end{equation}
	Apply Theorem \ref{restriction-thm-local} at the smaller scale $RK^{-1}$ so that
	\begin{equation}
		\label{rest-ind-hypo-1}
		\int_{B_j}|E\bar h_j|^p\leq C_\e R^\e K^{-\e}\|\bar h_j\|_p^p.
	\end{equation}
	Since the  Fourier transforms of $\{\bar h_j\}$ are contained in finite-overlapping $RK^{-1}$-balls in $\ZR^2$, we have $\sum_j\|\bar h_j\|_p^p\lesssim \|\bar h\|_p^p$.
	Thus, we can sum up over all $j$ in \eqref{rest-ind-hypo-1} to get
	\begin{equation}
		\nonumber
		\int_{\Box}|E\bar h|^p\lesssim C_\e R^\e K^{-\e}\|\bar h\|_p^p= C_\e R^{\e}K^{1-\e}\| h_{S_j}\|_p^p.
	\end{equation}
	Note that $\{S_j\}$ are finite-overlapping and recall \eqref{rescaling}. 
	Put this back to \eqref{broad-narrow-application} and sum up the contributions from all $S_j$ to get
	\begin{equation}
	\nonumber
		\int_{B_R}|Eh|^p\lesssim C_\e R^{\e} K^{3-p+\e}\sum_j\|h_{S_j}\|_p^p\lesssim C_\e R^{\e} K^{3-p+\e}\|h\|_p^p.
	\end{equation}
	This concludes \eqref{restriction-esti-local} as $p>3$ and $K=R^{\e^{20}}\gg 1$.
	
	\smallskip
	
	The proof for the case when the first term in \eqref{broad-narrow-application} dominates is similar to the case when the second term dominates, and we leave the details to the reader. \qedhere

\end{proof}

\bigskip

\bibliographystyle{alpha}
\bibliography{bibli}

\end{document}